\def\choixcompteur{subsection}
\newtheorem{theo}[\choixcompteur]{Theorem}
\newtheorem{prop}[\choixcompteur]{Proposition}
\newtheorem{lemm}[\choixcompteur]{Lemma}
\newtheorem{coro}[\choixcompteur]{Corollary}
\theoremstyle{definition}
\newtheorem{defiprop}[\choixcompteur]{Definition-Proposition}
\newtheorem{defi}[\choixcompteur]{Definition}
\newtheorem{rema}[\choixcompteur]{Remark}
\newtheorem{remas}[\choixcompteur]{Remarks}
\newtheorem*{exem*}{Example}
\newtheorem*{exems*}{Examples}
\newtheorem*{exam*}{Example}
\newtheorem*{exams*}{Examples}
\newtheorem*{rema*}{Remark}
\newtheorem*{remas*}{Remarks}
\newtheorem*{NB}{N.B}
\theoremstyle{definition}
\newtheorem*{defi*}{Definition}
\newtheorem*{defiprop*}{Definition-Proposition}
\theoremstyle{plain}
\newtheorem*{prop*}{Proposition}
\newtheorem*{lemm*}{Lemma}
\newtheorem*{coro*}{Corollary}
\newtheorem*{theo*}{Theorem}
 \def\cdr@enoncedef{%
 \newenvironment{enonce*}[2][plain]%
 {\let\cdrenonce\relax \theoremstyle{##1}%
 \newtheorem*{cdrenonce}{##2}%
 \begin{cdrenonce}}%
 {\end{cdrenonce}}   }%
\def\cf{{\it cf.\/}\ }
\def\ie{{\it i.e.\/}\ }
\def\eg{{\it e.g.\/}\ }
\def\lc{{\it l.c.\/}\ }
\def\vect{\overrightarrow}
\def\rest{\,\rule[-1.5mm]{.1mm}{3mm}_}  
\def\N{{\mathbb N}}    
\def\Z{{\mathbb Z}}
\def\R{{\mathbb R}}
\def\C{{\mathbb C}}
\def\F{{\mathbb F}}
\def\A{{\mathbb A}}
\def\M{{\mathbb M}}
\def\T{{\mathbb T}}
\newcommand{\g}[1]{\mathfrak{#1}} 
\def\qa{\alpha}     
\def\qb{\beta}
\def\qd{\delta}
\def\qe{\varepsilon}
\def\qf{\varphi}
\def\qg{\gamma}
 \def\qk{\kappa}
 \def\ql{\lambda}
\def\qm{\mu}
\def\qn{\nu}
\def\qo{\omega}
\def\qp{\pi}
\def\qr{\rho}
\def\qt{\tau}
 \def\qth{\theta}
\def\qx{\xi}
 \def\qz{\zeta}
\def\QD{\Delta}
\def\QF{\Phi}
\def\QG{\Gamma}
\def\QL{\Lambda}
\def\QO{\Omega}
\def\shh{{\mathcal H}}
\def\shk{{\mathcal K}}
\def\shl{{\mathcal L}}
\def\shm{{\mathcal M}}
\def\sho{{\mathcal O}}
\def\shq{{\mathcal Q}}
\def\shs{{\mathcal S}}
\def\sht{{\mathcal T}}
\def\SHF{{\mathscr F}}
\def\SHI{{\mathscr I}}
\begin{document}

\title{Spherical Hecke algebras for Kac-Moody groups \goodbreak over local fields}
\author{St\'ephane Gaussent and Guy Rousseau}

\date{May 22, 2012}

\maketitle

%






\begin{abstract}
We define the spherical Hecke algebra $\shh$ for an almost split Kac-Moody group $G$ over a local non-archimedean field. We use the hovel $\SHI$ associated to this situation, which is the analogue of the Bruhat-Tits building for a reductive group.
 The stabilizer $K$ of a special point on the standard apartment plays the role of a maximal open compact subgroup. We can define $\shh$ as the algebra of $K-$bi-invariant functions on $G$ with almost finite support. 
 As two points in the hovel are not always in a same apartment, this support has to be in some large subsemigroup $G^+$ of $G$. 
 We prove that the structure constants of $\shh$ are polynomials in the cardinality of the residue field, with integer coefficients depending on the geometry of the standard apartment.
 We also prove the Satake isomorphism between $\shh$ and the algebra of Weyl invariant elements in some completion of a Laurent polynomial algebra. In particular, $\shh$ is always commutative.
 Actually, our results apply to abstract ``locally finite'' hovels, so that we can define the spherical algebra with unequal parameters. 
\end{abstract}


\setcounter{tocdepth}{1}    
\tableofcontents

\section*{Introduction}
\label{seIntro}

\par Let $G$ be a connected reductive group over a local non-archimedean field $\shk$ and let $K$ be an open compact subgroup. The space $\shh$ of complex functions on $G$, bi-invariant by $K$ and with compact support is an algebra for the natural convolution product.
 Ichiro Satake \cite{Sa63} studied this algebra $\shh$ to define the spherical functions and proved, in particular, that $\shh$ is commutative for good choices of $K$.
 We know now that one of the good choices for $K$ is the fixator of some special vertex for the action of $G$ on its Bruhat-Tits building $\SHI$, whose structure is explained in \cite{BrT72}.
 Moreover $\shh$, now called the spherical Hecke algebra, may be entirely defined using $\SHI$, see \eg \cite{P06}.
 
 \par Kac-Moody groups are interesting generalizations of reductive groups and it is natural to try to generalize the spherical Hecke algebra to the case of a Kac-Moody group.
  But there is now no good topology on $G$ and no good compact subgroup, so the ``convolution product'' has to be defined only with algebraic means.
  Alexander Braverman and David Kazhdan \cite{BrK10} succeeded in defining such a spherical Hecke algebra, when $G$ is split and untwisted affine.
  For a well chosen subgroup $K$, they define $\shh$ as an algebra of $K-$bi-invariant complex functions with ``almost finite'' support. There are two new features: the support has to be in a subsemigroup $G^+$ of $G$ and it is an infinite union of double classes.
  Hence, $\shh$ is naturally a module over the ring of complex formal power series.
  
  \par Our idea is to define this spherical Hecke algebra using the hovel associated to the almost split Kac-Moody group $G$ that we built in \cite{GR08}, \cite{R12} and \cite{R13}.
  This hovel $\SHI$ is a set with an action of $G$ and a covering by subsets called apartments. They are in one-to-one correspondence with the maximal split subtori, hence permuted transitively by $G$.
  Each apartment $A$ is a finite dimensional real affine space and its stabilizer $N$ in $G$ acts on it via a generalized affine Weyl group $W=W^v\ltimes Y$ (where $Y\subset\vect A$ is a discrete subgroup of translations) which stabilizes a set $\shm$ of affine hyperplanes called walls.
  So, $\SHI$ looks much like the Bruhat-Tits building of a reductive group, but $\shm$ is not a locally finite system of hyperplanes (as the root system $\QF$ is infinite) and two points in $\SHI$ are not always in a same apartment (this is why $\SHI$ is called a hovel).
  There is on $\SHI$ a $G-$invariant preorder $\leq$ which induces on each apartment $A$ the preorder given by the Tits cone $\sht\subset\vect A$.
  
  \par Now, we consider the fixator $K$ in $G$ of a special point $0$ in a chosen standard apartment $\A$. 
  The spherical Hecke algebra $\shh_R$ is a space of $K-$bi-invariant functions on $G$ with values in a ring $R$. In other words, it is the space $\shh_R^\SHI$ of $G-$invariant functions on $\SHI_0\times\SHI_0$ where $\SHI_0=G/K$ is the orbit of $0$ in $\SHI$.
  The convolution product is easy to guess from this point of view: 
  $(\qf*\psi)(x,y)=\sum_{z\in\SHI_0}\,\qf(x,z)\psi(z,y)$ (if this sum means something).
  As two points $x,y$ in $\SHI$ are not always in a same apartment (\ie the Cartan decomposition fails: $G\neq KNK$), we have to consider pairs $(x,y)\in\SHI_0\times\SHI_0$, with $x\leq y$ (this implies that $x,y$ are in a same apartment).
  For $\shh_R$, this means that the support of $\qf\in\shh_R$ has to be in $K\backslash G^+/K$ where $G^+=\{g\in G\mid 0\leq g.0\}$ is a semigroup.
  In addition, $K\backslash G^+/K$ is in one-to-one correspondence with the subsemigroup $Y^{++}=Y\cap C^v_f$ of $Y$ (where $C^v_f$ is the fundamental Weyl chamber).
  Now, to get a well defined convolution product, we have to ask (as in \cite{BrK10}) the support of a $\qf\in\shh_R$ to be almost finite: $supp(\qf)\subset\bigcup_{i=1}^n\,(\ql_i-Q_+^\vee)\cap Y^{++}$, where $\ql_i\in Y^{++}$ and $Q_+^\vee$ is the subsemigroup of $Y$ generated by the fundamental coroots. Note that $ (\ql-Q_+^\vee)\cap Y^{++}$ is infinite except when $G$ is reductive.
  
  \par With this definition we are able to prove that $\shh_R$ is really an algebra, which generalizes the known spherical Hecke algebras in the finite or affine split case (\S \ref{s2}).
  In the split case, we describe the hovel $\SHI$ and give a direct proof that  $\shh_R$ is commutative (\S \ref{s3}).
  
\par  The structure constants of $\shh_R$ are the non-negative integers $m_{\ql,\qm}(\qn)$ (for $\ql,\qm,\qn\in Y^{++}$) such that $c_\ql*c_\qm=\sum_{\qn\in Y^{++}}\,m_{\ql,\qm}(\qn)c_\qn$, where $c_\ql$ is the characteristic function of $K\ql K$.
  Each chamber (= alcove) in $\SHI$ has only a finite number of adjacent chambers along a given panel. These numbers are called parameters of $\SHI$ and they form a finite set $\shq$. In the split case, there is only one parameter $q$: the number of elements of the residue field $\qk$ of $\shk$.
  In \S \ref{s4} we show that the structure constants are polynomials in these parameters with integral coefficients depending only on the geometry of an apartment. 
  
  \par  In \S \ref{s5} we build an action of  $\shh_R$ on the module of functions from $\A\cap\SHI_0$ to $R$. This gives an injective homomorphism from $\shh_R$ into a suitable completion $R[[Y]]$ of the group algebra $R[Y]$; hence $\shh_R$ is abelian (\ref{5.3}).
  After modification by a character this homomorphism gives the Satake isomorphism from $\shh_R$ onto the subalgebra $R[[Y]]^{W^v}$ of $W^v-$invariant elements in $R[[Y]]$.
  The proof involves a parabolic retraction of $\SHI$ onto an extended tree inside it.
  
  \par Actually, this article is written in a more general framework (explained in \S \ref{s1}): we ask $\SHI$ to be an abstract ordered hovel (as defined in \cite{R11}) and $G$ a strongly transitive group of (positive, type-preserving) automorphisms.
  
  \par The general definition and study of Hecke algebras for split Kac-Moody groups over local fields was also undertaken by Alexander Braverman, David Kazhdan and Manish Patnaik (as we knew from \cite{P10}).
  A preliminary draft appeared recently \cite{BrKP12}. Their arguments are algebraic without use of a geometric object as a hovel, and the proofs seem complete (temporarily?) only for the untwisted affine case.
  In addition to the construction of the spherical Hecke algebra and the Satake isomorphism (as here),   they give a formula for spherical functions and they build the Iwahori-Hecke algebra. We hope to generalize, in a near future, these results to our general framework.
  
  \par One should notice that these authors use, instead of our group $K$, a smaller $K_1$, a priori slightly different, see Remark in Section \ref{3.4}. 

\section{General framework}\label{s1}

\subsection{Vectorial data}\label{1.1}  We consider a quadruple $(V,W^v,(\qa_i)_{i\in I}, (\qa^\vee_i)_{i\in I})$ where $V$ is a finite dimensional real vector space, $W^v$ a subgroup of $GL(V)$ (the vectorial Weyl group), $I$ a finite set, $(\qa^\vee_i)_{i\in I}$ a family in $V$ and $(\qa_i)_{i\in I}$ a free family in the dual $V^*$.
 We ask these data to verify the conditions of \cite[1.1]{R11}.
  In particular, the formula $r_i(v)=v-\qa_i(v)\qa_i^\vee$ defines a linear involution in $V$ which is an element in $W^v$ and $(W^v,\{r_i\mid i\in I\})$ is a Coxeter system.

  \par To be more concrete we consider the Kac-Moody case of [\lc; 1.2]: the matrix $\M=(\qa_j(\qa_i^\vee))_{i,j\in I}$ is a generalized Cartan matrix.
  Then $W^v$ is the Weyl group of the corresponding Kac-Moody Lie algebra $\g g_\M$ and the associated real root system is
$$
\QF=\{w(\qa_i)\mid w\in W^v,i\in I\}\subset Q=\bigoplus_{i\in I}\,\Z.\qa_i.
$$ We set $\QF^\pm{}=\QF\cap Q^\pm{}$ where $Q^\pm{}=\pm{}(\bigoplus_{i\in I}\,(\Z_{\geq{}0}).\qa_i)$ and $Q^\vee=(\bigoplus_{i\in I}\,\Z.\qa_i^\vee)$, $Q^\vee_\pm{}=\pm{}(\bigoplus_{i\in I}\,(\Z_{\geq{}0}).\qa_i^\vee)$.
   We have  $\QF=\QF^+\cup\QF^-$ and, for $\qa=w(\qa_i)\in\QF$, $r_\qa=w.r_i.w^{-1}$ and $\qa^\vee=w(\qa_i^\vee)$ depend only on $\qa$, and $r_\qa(v)=v-\qa(v)\qa^\vee$.

\par The set $\QF$ is an (abstract reduced) real root system in the sense of \cite{MP89}, \cite{MP95} or \cite{Ba96}.
We shall sometimes also use the set $\QD=\QF\cup\QD^+_{im}\cup\QD^-_{im}$ of all roots (with $-\QD^-_{im}=\QD^+_{im}\subset Q^+$,  $W^v-$stable) defined in \cite{K90}.
 It is an (abstract reduced) root system in the sense of \cite{Ba96}.

  \par The {\it fundamental positive chamber} is $C^v_f=\{v\in V\mid\qa_i(v)>0,\forall i\in I\}$.
   Its closure $\overline{C^v_f}$ is the disjoint union of the vectorial faces $F^v(J)=\{v\in V\mid\qa_i(v)=0,\forall i\in J,\qa_i(v)>0,\forall i\in I\setminus J\}$ for $J\subset I$.
    The positive (resp. negative) vectorial faces are the sets $w.F^v(J)$ (resp. $-w.F^v(J)$) for $w\in W^v$ and $J\subset I$.
    The set $J$ or the face $w.F^v(J)$ is called {\it spherical} if the group $W^v(J)$ generated by $\{r_i\mid i\in J\}$ is finite.

    \par The {\it Tits cone}  $\sht$ is the (disjoint) union of the positive vectorial faces. It is a $W^v-$stable convex cone in $V$.

\subsection{The model apartment}\label{1.2} As in \cite[1.4]{R11} the model apartment $\A$ is $V$ considered as an affine space and endowed with a family $\shm$ of walls. 
 These walls  are affine hyperplanes directed by Ker$(\qa)$ for $\qa\in\QF$.

 \par We ask this apartment to be {\bf semi-discrete} and the origin $0$ to be {\bf special}.
  This means that these walls are the hyperplanes defined as follows: 
$$M(\qa,k)=\{v\in V\mid\qa(v)+k=0\}\qquad\text{for }\qa\in\QF\text{ and } k\in\QL_\qa$$ (with $\QL_\qa=k_\qa.\Z$ a non trivial discrete subgroup of $\R$). 
Using the following lemma (\ie replacing $\QF$ by $\widetilde\QF$) we shall assume that $\QL_\qa=\Z, \forall\qa\in\QF$.

  \par For $\qa=w(\qa_i)\in\QF$, $k\in\QL_\qa(=\Z)$ and $M=M(\qa,k)$, the reflection $r_{\qa,k}=r_M$ with respect to $M$ is the affine involution of $\A$ with fixed point set the wall $M$ and associated linear involution $r_\qa$.
   The affine Weyl group $W^a$ is the group generated by the reflections $r_M$ for $M\in \shm$; we assume that $W^a$ stabilizes $\shm$.
   
   \par For $\qa\in\QF$ and $k\in\R$, $D(\qa,k)=\{v\in V\mid\qa(v)+k\geq0\}$ is an half-space, it is called an {\it half-apartment} if $k\in\QL_\qa$ ($=\Z$).

 
The Tits cone $\mathcal T$ 
and its interior $\mathcal T^o$ are convex and $W^v-$stable cones, therefore, we can define two $W^v-$invariant preorder relations  on $\mathbb A$: 
$$
x\leq y\;\Leftrightarrow\; y-x\in\mathcal T
; \quad x\stackrel{o}{\leq} y\;\Leftrightarrow\; y-x\in\mathcal T^o.
$$ 
 If $W^v$ has no  fixed point in $V\setminus\{0\}$ and no finite factor, then they are orders; but they are not in general.

   \begin{lemm}\label{1.2a} For all $\qa\in\QF$ we choose $k_\qa>0$ and define $\widetilde\qa=\qa/k_\qa$, $\widetilde\qa^\vee=k_\qa.\qa^\vee$.
   Then $\widetilde\QF=\{\widetilde\qa\mid\qa\in\QF\}$ is the (abstract reduced) real root system (in the sense of \cite{MP89}, \cite{MP95} or \cite{Ba96}) associated to $(V,W^v,(k_{\qa_i}^{-1}.\qa_i)_{i\in I}, (k_{\qa_i}.\qa^\vee_i)_{i\in I})$ hence to the generalized Cartan matrix $\widetilde\M=(k_{\qa_j}^{-1}.\qa_j(k_{\qa_i}.\qa_i^\vee))_{i,j\in I}$.
   Moreover with $\widetilde\QF$, the walls are described using the subgroups $\widetilde\QL_\qa=\Z$.
   \end{lemm}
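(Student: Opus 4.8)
The plan is to verify that the rescaled quadruple $(V,W^v,(\widetilde\qa_i)_{i\in I},(\widetilde\qa_i^\vee)_{i\in I})$ again satisfies the conditions of \cite[1.1 and 1.2]{R11}, that its associated real root system is exactly $\widetilde\QF$, and finally to rewrite the wall data in terms of $\widetilde\QF$. The one point that is not pure bookkeeping is a preliminary remark: $\QL_\qa$ (hence $k_\qa$) depends only on the $W^v$--orbit of $\qa$. Indeed $0\in\QL_{\qa_i}$, so $M(\qa_i,0)$ is a wall and $r_{\qa_i}=r_{M(\qa_i,0)}\in W^a$; thus $W^v\subseteq W^a$, so $W^v$ also stabilizes $\shm$, and since $w$ acts linearly with $w\cdot M(\qa,k)=M(w(\qa),k)$ we get $\QL_\qa\subseteq\QL_{w(\qa)}$, whence equality and $k_\qa=k_{w(\qa)}$ for all $w\in W^v$.

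Next I would record the elementary identities. For $v\in V$ one has $r_i(v)=v-\widetilde\qa_i(v)\widetilde\qa_i^\vee=v-k_{\qa_i}^{-1}\qa_i(v)\cdot k_{\qa_i}\qa_i^\vee=v-\qa_i(v)\qa_i^\vee$, so the reflections $r_i$ and the Coxeter system $(W^v,\{r_i\})$ are unchanged; $(\widetilde\qa_i)_{i\in I}$ is still free in $V^*$, being a positive rescaling of $(\qa_i)$; and $\widetilde\M_{ij}=\widetilde\qa_j(\widetilde\qa_i^\vee)=(k_{\qa_i}/k_{\qa_j})\qa_j(\qa_i^\vee)$ has the same diagonal $2$'s and the same off-diagonal signs and zero pattern as $\M$, hence is a generalized Cartan matrix in the (possibly real) sense of \cite[1.2]{R11}. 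Therefore the rescaled quadruple is of the required type, and by definition its associated real root system is $\{w(\widetilde\qa_i)\mid w\in W^v,\ i\in I\}$; since $W^v$ acts on $V^*$ exactly as before and $w(\widetilde\qa_i)=w(\qa_i)/k_{\qa_i}=w(\qa_i)/k_{w(\qa_i)}$ by the preliminary remark, this set is precisely $\widetilde\QF$. The remaining axioms of \cite{MP89,MP95,Ba96} are inherited: for $\qa=w(\qa_i)$ and $\widetilde\qa^\vee:=k_\qa\qa^\vee=w(\widetilde\qa_i^\vee)$ one has $\widetilde\qa(\widetilde\qa^\vee)=\qa(\qa^\vee)=2$; the reflection $v\mapsto v-\widetilde\qa(v)\widetilde\qa^\vee$ is $r_\qa$, and its contragredient maps $\widetilde\qb=\qb/k_\qb$ to $k_\qb^{-1}(\qb-\qb(\qa^\vee)\qa)\in\widetilde\QF$ because $\qb-\qb(\qa^\vee)\qa\in\QF$ and $k$ is orbit--constant; finally $\widetilde\QF$ is reduced since $\R\widetilde\qa=\R\qa$ and $k_{-\qa}=k_\qa$.

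For the walls, write $\QL_\qa=k_\qa\Z$; then any $m\in\QL_\qa$ is $m=k_\qa n$ with $n\in\Z$ and $M(\qa,m)=\{v\mid\qa(v)+k_\qa n=0\}=\{v\mid\widetilde\qa(v)+n=0\}=M(\widetilde\qa,n)$. Hence the family $\shm$, and therefore the group $W^a$, is literally the same, but in its description via $\widetilde\QF$ it is indexed by $\widetilde\QL_\qa=\Z$ for every $\qa$; moreover each affine reflection $r_{\qa,m}=r_{\widetilde\qa,n}$ has the unchanged linear part $r_\qa=r_{\widetilde\qa}$. So the model apartment attached to $\widetilde\QF$ is the same semi-discrete apartment with the same special origin $0$.

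I do not expect a real obstacle here: the whole proof is the arithmetic of rescaling each root by a positive scalar and each coroot by its inverse. The only step requiring a moment's care --- and the reason the statement holds at all --- is closure of $\widetilde\QF$ under the (unchanged) contragredient $W^v$--action, which hinges on the preliminary remark that $\QL_\qa$, hence $k_\qa$, is constant on $W^v$--orbits; a minor auxiliary point is that $\widetilde\M$ need no longer have integer entries, so ``generalized Cartan matrix'' must be understood in the broader sense used in this framework.
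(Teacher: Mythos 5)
Your proof has a genuine gap, and it sits precisely at the step you dismiss as a ``minor auxiliary point.'' The statement of the lemma explicitly asserts that $\widetilde\M=(k_{\qa_j}^{-1}\qa_j(k_{\qa_i}\qa_i^\vee))_{i,j}$ is a generalized Cartan matrix, and in this paper that term keeps its standard meaning: integer entries, $2$'s on the diagonal, non-positive off-diagonal with the symmetric vanishing pattern. You observe the signs and the diagonal but then declare that $\widetilde\M$ ``need no longer have integer entries'' and retreat to a purported ``broader sense''; that is exactly what has to be \emph{disproved}, and the paper does disprove it. The reason the framework of \S\ref{1.1}--\ref{1.2} is usable after rescaling is that integrality \emph{does} hold, and proving it is the real content of the lemma.

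The paper's argument is short but not bookkeeping: since $0,k_\qa\in\QL_\qa$, the reflections $r_{M(\qa,0)}$ and $r_{M(\qa,k_\qa)}$ lie in $W^a$, and their product is the translation $\qt$ by $k_\qa\qa^\vee$; because $W^a$ is assumed to stabilize $\shm$, $\qt(M(\qb,0))=M(\qb,-\qb(k_\qa\qa^\vee))$ forces $\qb(k_\qa\qa^\vee)\in\QL_\qb=k_\qb\Z$, i.e.\ $\widetilde\qb(\widetilde\qa^\vee)=k_\qb^{-1}k_\qa\qb(\qa^\vee)\in\Z$. Applied to simple roots this gives the integrality of $\widetilde\M$. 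Your preliminary remark ($k_{w\qa}=k_\qa$ because $W^v\subset W^a$ stabilizes $\shm$) is correct and is used in the paper too, but you exploit only the linear part $W^v$ of $W^a$ acting on $\shm$, and you never use the translation part; that omission is what leaves the integrality unproved. The rest of your verifications (the $r_i$ and the Coxeter system are unchanged, $\widetilde\QF=\{w\widetilde\qa_i\}$ via orbit-constancy of $k$, rewriting $M(\qa,k_\qa n)=M(\widetilde\qa,n)$, same $\shm$ and $W^a$, $\widetilde\QL_\qa=\Z$) are fine and agree with what the paper leaves implicit after saying ``the lemma is clear.''
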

   \begin{proof} For $\qa,\qb\in\QF$, the group $W^a$ contains the translation $\qt$ by $k_\qa.\qa^\vee$ and $\qt(M(\qb,0))=M(\qb,-\qb(k_\qa.\qa^\vee))$.
   So $k_\qa.\qb(\qa^\vee)\in\QL_\qb$ \ie $\widetilde\qb(\widetilde\qa^\vee)=k_{\qb}^{-1}.k_\qa.\qb(\qa^\vee)\in\Z$.
   Hence $\widetilde\M=(k_{\qa_j}^{-1}.\qa_j(k_{\qa_i}.\qa_i^\vee))_{i,j\in I}$ is a generalized Cartan matrix and the lemma is clear, as $k_{w\qa}=k_\qa$.
   \end{proof}

\subsection{Faces, sectors, chimneys...}
\label{suse:Faces}

 The faces in $\mathbb A$ are associated to the above systems of walls 
and halfapartments (\ie $D(\alpha,k) = \{ v\in \mathbb A\,\mid \, \alpha(v)+k \geq 0 \}$). As in \cite{BrT72}, they 
are no longer subsets of $\mathbb A$, but filters of subsets of $\mathbb A$. For the definition of that notion and its properties, we refer to \cite{BrT72} or \cite{GR08}.

If $F$ is a subset of $\mathbb A$ containing an element $x$ in its closure, 
the germ of $F$ in $x$ is the filter ${\mathrm germ}_x(F)$ consisting of all subsets of $\mathbb A$ which are intersections of $F$ and neighbourhoods of $x$. In particular, if $x\neq y\in E$, we denote the germ in $x$ of the segment $[x,y]$ (resp. of the interval $]x,y]$) by $[x,y)$ (resp. $]x,y)$).

The {\it enclosure} $cl_{\mathbb A}(F)$ of a filter $F$ of subsets of $\mathbb A$  is the filter made of  the subsets of $\mathbb A$ containing an element of $F$ of the shape $\cap_{\alpha\in\Delta}D(\alpha,k_\alpha)$, where $k_\alpha\in\mathbb Z\cup\{\infty\}$ (here, $D(\alpha,\infty) = \mathbb A$).

\medskip
A {\it face} $F$ in the apartment $\mathbb A$ is associated
 to a point $x\in \mathbb A$ and a  vectorial face $F^v$ in $V$; 
it is called spherical according to the nature of $F^v$. More 
precisely,  a subset $S$ of $\mathbb A$ is an element of the face $F(x,F^v)$ if and only if
it contains  an intersection of half-spaces $D(\alpha,k)$ or open halfspaces $D^\circ(\alpha,k)$ (for
$\alpha\in\Delta$ and $k\in\mathbb Z\sqcup\{\infty\}$) which contains
 $\Omega\cap(x+F^v)$, where $\Omega$ is an open neighborhood of $x$ in $\mathbb A$. The 
enclosure of a face $F=F(x,F^v)$ is its closure: the closed-face $\overline F$. It is the  enclosure of the local-face in $x$, ${\mathrm germ}_x(x+F^v)$. 

There is an order on the faces: the assertions ``$F$ is a face of $F'$ '', 
``$F'$ covers $F$ '' and ``$F\leq F'$ '' are by definition equivalent to
$F\subset\overline{F'}$.  The dimension of a face $F$ is the smallest dimension of an affine space generated by some $S\in F$. The (unique) such affine space $E$ of minimal dimension is the support of $F$. Any $S\in F$ contains a non empty open subset of $E$. A face $F$ is spherical if the direction of its support meets the open Tits cone, then its fixator $W_F$ in $W$ is finite.

 Any point $x\in \mathbb A$ is contained in a unique face $F(x,V_0)$ which is minimal (but seldom spherical); $x$ is a vertex if, and only if, $F(x,V_0)=\{x\}$. 

 A {\it chamber} (or  alcove) is a maximal face, or, equivalently, a face  such
that all its elements contain a nonempty open subset of $\mathbb A$.

A {\it panel} is a spherical face maximal among faces which are not chambers, or, equivalently, a spherical face of dimension $n-1$. Its support is a wall.
So, the set  of spherical faces of $\mathbb A$ and the Tits cone completely determine the set $\mathcal M$ of walls.

\medskip
 A {\it sector} in $\mathbb A$ is a $V-$translate $\mathfrak s=x+C^v$ of a vectorial chamber 
$C^v=\pm w.C^v_f$ ($w \in W^v$), $x$ is its {\it base point} and $C^v$ its  {\it direction}.  Two sectors have the same direction if, and only if, they are conjugate
by $V-$translation,
 and if, and only if, their intersection contains another sector. 

 The {\it sector-germ} of a sector $\mathfrak s=x+C^v$ in $\mathbb A$ is the filter $\mathfrak S$ of 
subsets of~$\mathbb A$ consisting of the sets containing a $V-$translate of $\mathfrak s$, it is well 
determined by the direction $C^v$. So the set of 
translation classes of sectors in $\mathbb A$, the set of vectorial chambers in $V$ and 
 the set of sector-germs in $\mathbb A$ are in canonical bijection.
  We write $\g S_{-\infty}$ the sector-germ associated to the negative fundamental vectorial chamber $-C^v_f$.

 A {\it sector-face} in $\mathbb A$ is a $V-$translate $\mathfrak f=x+F^v$ of a vectorial face
$F^v=\pm wF^v(J)$. The sector-face-germ of $\mathfrak f$ is the filter $\mathfrak F$ of 
subsets containing a translate $\mathfrak f'$ of $\mathfrak f$ by an element of $F^v$ ({\it i.e.} $\mathfrak
f'\subset \mathfrak f$). If $F^v$ is spherical, then $\mathfrak f$ and $\mathfrak F$ are also called
spherical. The sign of $\mathfrak f$ and $\mathfrak F$ is the sign of $F^v$.

\medskip
A {\it chimney} in $\mathbb A$ is associated to a face $F=F(x, F_0^v)$, its basis, and to a vectorial face $F^v$, its direction, it is the filter
$$
\mathfrak r(F,F^v) = cl_{\mathbb A}(F+F^v).
$$ A chimney $\mathfrak r = \mathfrak r(F,F^v)$ is {\it splayed} if $F^v$ is spherical, it is {\it solid} if its support (as a filter, i.e. the smallest affine subspace containing $\mathfrak r$) has a finite fixator in $W^v$. A splayed chimney is therefore solid. The enclosure of a sector-face $\mathfrak f=x+F^v$ is a chimney. 

 \par A halfline $\delta$ with origin in $x$ and containing $y\not=x$ (or the interval $]x,y]$, the segment $[x,y]$) is called {\it preordered} if $x\leq y$ or $y\leq x$ and {\it generic} if $x\stackrel{o}{\leq} y$ or $y\stackrel{o}{\leq} x$. With these new notions, a chimney can be defined as the enclosure of a preordered halfline and a preordered segment-germ sharing the same origin. The chimney is splayed if, and only if, the halfline is generic.

 \subsection{The hovel}\label{1.3}  
 
 In this section, we recall the definition of an ordered affine hovel given by Guy Rousseau in \cite{R11}. 

An apartment of type $\mathbb A$ is a set $A$ endowed with a set $Isom(\mathbb A,A)$ of bijections (called isomorphisms) such that if $f_0\in Isom(\mathbb A,A)$, then $f\in Isom(\mathbb A,A)$ if, and only if, there exists $w\in W^a$ satisfying $f = f_0\circ w$.
An isomorphism between two apartments $\phi :A\to A'$ is a bijection such that $f\in Isom(\mathbb A,A)$ if, and only if, $\phi\circ f\in Isom(\mathbb A,A')$.
 As the filters in $\A$ defined in \ref{suse:Faces} above (\eg faces, sectors, walls,..) are permuted by $W^a$, they are well defined in any apartment of type $\A$.

\begin{defi*}
\label{de:AffineHovel}
An ordered affine hovel of type $\mathbb A$ is a set $\SHI$ endowed with a covering $\mathcal A$ of subsets  called apartments such that: 
\begin{enumerate}
\item[{\bf (MA1)}] any $A\in \mathcal A$ admits a structure of an apartment of type $\mathbb A$;
\item[{\bf (MA2)}] if $F$ is a point, a germ of a preordered interval, a generic halfline or a solid chimney in an apartment $A$ and if $A'$ is another apartment containing $F$, then $A\cap A'$ contains the enclosure $cl_A(F)$ of $F$ and there exists an isomorphism from $A$ onto $A'$ fixing $cl_A(F)$;
\item[{\bf (MA3)}] if $\mathfrak R$ is a germ of a splayed chimney and if $F$ is a face or a germ of a solid chimney, then there exists an apartment that contains $\mathfrak R$ and $F$;
\item[{\bf (MA4)}] if two apartments $A,A'$ contain $\mathfrak R$ and $F$ as in {\bf (MA3)}, then their intersection contains $cl_A(\mathfrak R\cup F)$ and there exists an isomorphism from $A$ onto $A'$ fixing $cl_A(\mathfrak R\cup F)$;
\item[{\bf (MAO)}] if $x,y$ are two points contained in two apartments $A$ and $A'$, and if $x\leq_A y$ then the two segments $[x,y]_A$ and $[x,y]_{A'}$ are equal.
\end{enumerate}
\end{defi*}

  \par We ask here $\SHI$ to be thick of {\bf finite thickness}: the number of chambers (=alcoves) containing a given panel has to be finite $\geq{}3$.
     This number is the same for any panel in a given wall $M$ \cite[2.9]{R11}; we denote it by $1+q_M$.

\medskip
 We assume that $\SHI$ has a strongly transitive group of automorphisms $G$ (\ie all isomorphisms involved in the above axioms are induced by elements of $G$, \cf \cite[4.10]{R13}).
  We choose in $\SHI$ a fundamental apartment which we identify with $\A$.
   As $G$ is strongly transitive, the apartments of $\SHI$ are the sets $g.\A$ for $g\in G$. The stabilizer $N$ of $\A$ in $G$ induces a group $\qn(N)$ of affine automorphisms of $\A$ which permutes the walls, sectors, sector-faces... and contains the affine Weyl group $W^a$ \cite[4.13.1]{R13}.
   We denote the fixator of $0\in\A$ in $G$ by $K$.

     \par We ask  $\qn(N)$ to be {\bf positive} and {\bf type-preserving} for its action on the vectorial faces.
      This means that the associated linear map $\vect w$ of any $w\in\qn(N)$ is in $W^v$.
      As $\qn(N)$ contains $W^a$ and stabilizes $\shm$, we have $\qn(N)=W^v\ltimes Y$, where $W^v$ fixes the origin $0$ of $\A$ and $Y$ is a group of translations such that:
  \qquad    $Q^\vee\subset Y\subset P^\vee=\{v\in V\mid\qa(v)\in\Z,\forall\qa\in\QF\}$.

  \par We ask $Y$ to be {\bf discrete} in $V$. This is clearly satisfied if $\QF$ generates $V^*$ \ie $(\qa_i)_{i\in I}$ is a basis of $V^*$.
 
 \medskip
  \par\noindent{\bf Examples.} The main examples of all the above situation are provided by the hovels of almost split Kac-Moody groups over fields complete for a discrete valuation and with a finite residue field, see \cite{R12}, \cite{Ch10}, \cite{Ch11} or \cite{R13}. Some details in the split case can be found in Section \ref{s3}.

\medskip
  \par\noindent{\bf Remarks.} a) In the following, we often refer to \cite{GR08} which deals with split Kac-Moody groups and residue fields containing $\C$. But the results cited are easily generalized to our present framework, using the above references.

  \par b) For an almost split Kac-Moody group over a local field $\shk$, the set of roots $\QF$ is $^\shk\QF_{red}=\{{^\shk\qa}\in{^\shk\QF}\mid \frac{1}{2}.{^\shk\qa}\not\in{^\shk\QF}\}$ where the relative root system $^\shk\QF$ describes well the commuting relations between the root subgroups.
  Unfortunately $\widetilde\QF$ gives a worst description of these relations.

  \subsection{Type $0$ vertices}\label{1.4} The elements of $Y$ considered as the subset $Y=N.0$ of $V=\A$ are called {\it vertices of type $0$} in $\A$; they are special vertices. We note $Y^+=Y\cap\sht$ and $Y^{++}=Y\cap \overline{C^v_f}$.
   The type $0$ vertices in $\SHI$ are the points on the orbit $\SHI_0$ of $0$ by $G$. This set $\SHI_0$ is often called the affine Grassmannian as it is equal to $G/K$.

   \par In general, $G$ is not equal to $KYK=KNK$ \cite[6.10]{GR08} \ie $\SHI_0\not=K.Y$.

   \par We know that $\SHI$ is endowed with a $G-$invariant preorder $\leq{}$ which induces the known one on $\A$ \cite[5.9]{R11}.
   We set $\SHI^+=\{x\in\SHI\mid0\leq{}x\}$ , $\SHI^+_0=\SHI_0\cap\SHI^+$ and $G^+=\{g\in G\mid0\leq{}g.0\}$; so $\SHI^+_0=G^+.0=G^+/K$.
   As $\leq{}$ is a $G-$invariant preorder, $G^+$ is a semigroup.

 \par  If $x\in\SHI^+_0$ there is an apartment $A$ containing $0$ and $x$ (by definition of $\leq{}$) and all apartments containing $0$ are conjugated to $\A$ by $K$ (axiom (MA2)); so $x\in K.Y^+$ as $\SHI^+_0\cap\A=Y^+$.
    But $\qn(N\cap K)=W^v$ and $Y^+=W^v.Y^{++}$ (with uniqueness of the element in $Y^{++}$); 
     so $\SHI^+_0=K.Y^{++}$, more precisely $\SHI^+_0=G^+/K$ is the disjoint union of the $KyK/K$ for $y\in Y^{++}$.

    \par Hence, we have proved that the map $Y^{++}\to K\backslash G^+/K$ is one-to-one and onto.

    \subsection{Vectorial distance and $Q^\vee-$order}\label{1.5}
    For $x\in\sht$, we note $x^{++}$ the unique element in $\overline{C^v_f}$ conjugated by $W^v$ to $x$.

    \par  Let $\SHI\times_\leq{}\SHI=\{(x,y)\in\SHI\times\SHI\mid x\leq{}y\}$ be the set of increasing pairs in $\SHI$.
    Such a pair $(x,y)$ is always in a same apartment $g.\A$; so $g^{-1}y-g^{-1}x\in\sht$ and we define the {\it vectorial distance} $d^v(x,y)\in  \overline{C^v_f}$ by $d^v(x,y)=(g^{-1}y-g^{-1}x)^{++}$.
    It does not depend on the choices we made.

    \par For $(x,y)\in \SHI_0\times_\leq{}\SHI_0=\{(x,y)\in\SHI_0\times\SHI_0\mid x\leq{}y\}$, the vectorial distance $d^v(x,y)$ takes values in $Y^{++}$.
     Actually, as $\SHI_0=G.0$, $K$ is the fixator of $0$ and $\SHI^+_0=K.Y^{++}$ (with uniqueness of the element in $Y^{++}$), the map $d^v$ induces a bijection between the set $\SHI_0\times_\leq{}\SHI_0/G$ of orbits of $G$ in $\SHI_0\times_\leq{}\SHI_0$ and $Y^{++}$.

     \par Any $g\in G^+$ is in $K.d^v(0,g0).K$.

     \par For $x,y\in\A$, we say that $x\leq{}_{Q^\vee}\,y$ (resp. $x\leq{}_{Q^\vee_\R}\,y$) when $y-x\in Q^\vee_+$ (resp. $y-x\in Q^\vee_{\R+}=\sum_{i\in I}\,\R_{\geq{}0}.\qa_i^\vee$).
     We get thus a preorder which is an order at least when $(\qa_i^\vee)_{i\in I}$ is free or $\R_+-$free (\ie $\sum a_i\qa_i^\vee=0,a_i\geq{}0\Rightarrow a_i=0,\forall i$).

\subsection{Paths}
\label{suse:Paths}

We consider piecewise linear continuous paths
$\pi:[0,1]\rightarrow \mathbb A$ such that each (existing) tangent vector $\pi'(t)$
 is in an orbit $W^v.\lambda$ of some $\lambda\in {\overline{C^v_f}}$ under the
vectorial Weyl group $W^v$. Such a path is called a {\it $\lambda-$path}; it is
increasing with respect to the preorder relation $\leq$ on $\mathbb A$.

 For any $t\neq 0$ (resp. $t \neq1$), we let
$\pi'_-(t)$ (resp. $\pi'_+(t)$) denote the derivative of $\pi$ at $t$ from the left
(resp. from the right). Further, we define $w_\pm(t)\in W^v$ to be the smallest
element  in its
$(W^v)_\lambda-$class such that $\pi'_\pm(t)=w_\pm(t).\lambda$ (where $(W^v)_\lambda$ is the fixator
in
$W^v$ of $\lambda$). Moreover, we denote by $\pi_-(t)=\pi(t)-[0,1)\pi_-'(t)=
[\pi(t),\pi(t-\varepsilon)\,)$ (resp. $\pi_+(t)=\pi(t)+[0,1)\pi_+'(t)=
[\pi(t),\pi(t+\varepsilon)\,)$ (for $\varepsilon>0$ small) the positive (resp. negative)
segment-germ of $\pi$ at $t$.

 The reverse path $\overline\pi$ defined by
${\overline\pi}=\pi(1-t)$ has symmetric properties, it is a $(-\lambda)-$path.

 For any choices of $\lambda\in {\overline{C^v_f}}$, $\pi_0\in \mathbb A$,
$r\in\mathbb N\setminus\{0\}$ and sequences
${\underline\tau}=(\tau_1,\tau_2,\dots,\tau_r)$ of elements in $W^v/(W^v)_\lambda$ and
${\underline a}=(a_0=0<a_1<a_2<\dots<a_r=1)$ of elements in $\mathbb R$, we define
a $\lambda-$path $\pi=\pi(\lambda,\pi_0,{\underline\tau},{\underline a})$
by the formula:
$$
\pi(t)=\pi_0+\sum_{i=1}^{j-1}\;(a_i-a_{i-1})\tau_i(\lambda)+(t-a_{j-1})\tau_j(\lambda)
\quad \hbox{ for } \quad a_{j-1}\leq t\leq a_j.
$$

\noindent Any $\lambda-$path may be defined in this way (and we may
 assume $\tau_j\neq\tau_{j+1}$).

\begin{defi*}
\label{de:Hecke}
\cite[3.27]{KM08}
A {\it Hecke path} of shape $\lambda$ with respect to $-C^v_f$ is a
$\lambda-$path such that, for all $t\in [0,1]\setminus\{0,1\}$,
$\pi_+'(t)\leq_{W^v_{\pi(t)}}\pi_-'(t)$, which means  that there exists a
${W^v_{\pi(t)}}-$chain from $\pi_-'(t)$ to $\pi_+'(t)$, {\it i.e.} finite sequences
$(\xi_0=\pi_-'(t),\xi_1,\dots,\xi_s=\pi_+'(t))$ of vectors in $V$ and
$(\beta_1,\dots,\beta_s)$  of  real roots such that, for all $i=1,\dots,s$:
\begin{itemize}
\item [i)] $r_{\beta_i}(\xi_{i-1})=\xi_i$,
\item[ii)] $\beta_i(\xi_{i-1})<0$,
\item[iii)] $r_{\beta_i}\in{W^v_{\pi(t)}}$ {\it i.e.} $\beta_i(\pi(t))\in\mathbb Z$:
$\pi(t)$ is in a wall of direction Ker$(\beta_i)$.
\item[iv)] each $\qb_i$ is positive  with respect to $-C^v_f$ \ie $\qb_i(C^v_f)>0$.
\end{itemize}
\end{defi*}

\begin{remas*} \label{1.9}
1) The path is folded at $\pi(t)$ by applying successive reflections along the walls $M(\beta_i,-\beta_i(\pi(t))\,)$. Moreover conditions ii) and iv)
tell us that the path is ``positively folded'' (cf. \cite{GL05}) \ie centrifugally folded with respect to the sector germ $\g S_{-\infty}=germ_\infty(-C^v_f)$.

\par 2) Let $\g c_-=germ_0(-C^v_f)$ be the negative fundamental chamber (= alcove).
A  {\it Hecke path} of shape $\lambda$ with respect to $\g c_-$ \cite{BCGR11} is a $\lambda-$path in the Tits cone $\sht$ satisfying the above conditions except that we replace iv) by :

\par iv') each $\qb_i$ is positive  with respect to $\g c_-$ \ie $\qb_i(\pi(t)-\g c_-)>0$.

\par Then ii) and iv') tell us that the path is centrifugally folded with respect to the center  $\g c_-$.

\end{remas*}


\section{Convolution algebras}\label{s2}

\subsection{Wanted}\label{2.1} 
We consider the space 
$$\widehat\shh_R^\SHI=\widehat\shh_R(\SHI,G)=\{\qf^{\SHI\!} :\SHI_0\times_\leq{}\SHI_0\to R\mid\qf^{\SHI\!}(gx,gy)=\qf^{\SHI\!}(x,y),\forall g\in G\}
$$ of $G-$invariant functions on $\SHI_0\times_\leq{}\SHI_0$ with values in a ring $R$ (essentially $\C$ or $\Z$). We want to make $\widehat\shh_R^\SHI$ (or some large subspace) an algebra for the following convolution product:
$$
(\qf^{\SHI\!}*\psi^{\SHI\!})(x,y)=\sum_{x\leq{}z\leq{}y}\,\qf^{\SHI\!}(x,z)\psi^{\SHI\!}(z,y).
$$
It is clear that this product is associative and $R-$bilinear if it exists.

 \par Via $d^v$, $\widehat\shh_R^\SHI$ is linearly isomorphic to the space $\widehat\shh_R=\{\qf^{G\!}:Y^{++}=K\backslash G^+/K\to R\}$, which can be interpreted as the space of $K-$bi-invariant functions on $G^+$.
  The correspondence $\qf^{\SHI\!}\leftrightarrow\qf^{G\!}$ between $\widehat\shh_R^\SHI$ and $\widehat\shh_R$ is given by: 
$$
\qf^{G\!}(g)=\qf^{\SHI\!}(0,g.0)\quad\text{and}\quad \qf^{\SHI\!}(x,y)= \qf^{G\!}(d^v(x,y)).
$$
 In this setting, the convolution product should be: $(\qf^{G\!}*\psi^{G\!})(g)=\sum_{h\in G^+/K}\, \qf^{G\!}(h)\psi^{G\!}(h^{-1}g)$, where we consider $\qf^{G\!}$ and $\psi^{G\!}$ as trivial on $G\setminus G^+$.
  In the following we shall often make no difference between $\qf^{\SHI\!}$ or  $\qf^{G\!}$ and forget the exponents $^{\SHI\!}$ and $^{G\!}$.
  
    We consider the subspace $\shh_R^f$ of functions with finite support in $Y^{++}=K\backslash G^+/K$; its natural basis is $(c_\ql)_{\ql\in Y^{++}}$ where $c_\ql$ sends $\ql$ to $1$ and $\qm\not=\ql$ to $0$. Clearly $c_0$ is a unit for $*$.
 In $\widehat\shh_R^\SHI$, $(c_\ql*c_\qm)^{\SHI\!}(x,y)$ is the number of triangles $[x,z,y]$ with $d^v(x,z)=\ql$ and $d^v(z,y)=\qm$.

  \par As suggested by \cite{BrK10} and lemma \ref{2.3}, we consider also the subspace $\shh_R$  of $\widehat\shh_R$ of functions $\qf$ with {\it almost finite}  support \ie $supp(\qf)\subset\cup_{i=1}^n\,(\ql_i-Q^\vee_+)\cap Y^{++}$ where $\ql_i\in Y^{++}$. 

  \subsection{Retractions onto $Y^+$}\label{2.2} 
   For all $x\in \SHI^+$ there is an apartment containing $x$ and $\g c_-$ \cite[5.1]{R11} and this apartment is conjugated to $\A$ by an element of $K$ fixing $\g c_-$ (axiom (MA2) ).
    So, by the usual arguments and [\lc, 5.5] we can define a retraction $\qr_{\g c_-}$ of $\SHI^+$ into $\A$ with center $\g c_-$; its image is $\qr_{\g c_-}(\SHI^+)=\sht=\SHI^+\cap\A$ and $\qr_{\g c_-}(\SHI^+_0)=Y^+$.

    \par There is also a retraction $\qr_{-\infty}$ of $\SHI$ onto $\A$ with center the sector-germ $\g S_{-\infty}$  \cite[4.4]{GR08}.

    \par For $\qr=\qr_{\g c_-}$ or $\qr_{-\infty}$ the image of a segment $[x,y]$ with $(x,y)\in\SHI\times_\leq{}\SHI$ and $d^v(x,y)=\ql\in\overline{C^v_f}$ is a $\ql-$path 
    \cite[4.4]{GR08}. In particular, $\qr(x)\leq{}\qr(y)$.

   \subsection{Convolution product}\label{2.2a}

   \par The convolution product in $\widehat\shh_R$ should be defined (for $y\in Y^{++}$) by 
$$
(\qf*\psi)(y)=\sum\,\qf(z)\psi(d^v(z,y))
$$ where the sum runs over the $z\in\SHI^+_0$ such that $0\leq{}z\leq{}y$ and $\qf(z)=\qf^\SHI(0,z)=\qf^G(d^v(0,z))$.

\medskip
   \par 1) Using  $\qr_{\g c_-}$ we have, for $\ql,\qm,y\in Y^{++}$,   $(c_\ql*c_\qm)(y)=\sum_{w\in W^v/(W^v)_\lambda}\,N_{\g c_-}(\qm,w.\ql,y)$ where  $N_{\g c_-}(\qm,w.\ql,y)$ is the number of $z\in\SHI^+_0$ with $d^v(z,y)=\qm$ and $\qr_{\g c_-}(z)=w.\ql\in Y^+$.
   Note that, if $N_{\g c_-}(\qm,w\ql,y)>0$, there exists a $\qm-$path from $w\ql$ to $y$, hence $y\in w\ql+Y^+$.

   \par So $c_\ql*c_\qm$ is the formal sum $c_\ql*c_\qm=\sum_{\qn\in Y^{++}}\,m_{\ql,\qm}(\qn)c_\qn$ where the structure constant $m_{\ql,\qm}(\qn)=\sum_{w\in W^v/(W^v)_\lambda}\,N_{\g c_-}(\qm,w.\ql,\qn)\in\Z_{\geq{}0}\cup\{+\infty\}$ is also equal to the number of triangles $[x,z,y]$ with $d^v(x,z)=\ql$ and $d^v(z,y)=\qm$, for any fixed pair $(x,y)\in \SHI_0\times_\leq{}\SHI_0$ with $d^v(x,y)=\qn$ (\eg $(x,y)=(0,\qn)$).

\medskip
\par 2) Using $\qr_{-\infty}$ we have $m_{\ql,\qm}(\qn)=\sum_{z'}\,N_{-\infty}(\qm,z',\qn)$ where the sum runs over the $z'$ in $Y^+(\ql)=\qr_{-\infty}(\{z\in\SHI^+_0\mid d^v(0,z)=\ql\})$ and $N_{-\infty}(\qm,z',\qn)\in\Z_{\geq{}0}\cup\{+\infty\}$ is the number of $z\in\SHI^+_0$ with $d^v(0,z)=\ql$, $d^v(z,y)=\qm$ (for any $y\in\SHI^+_0$ with $d^v(0,y)=\qn$ \eg $y=\qn$) and $\qr_{-\infty}(z)=z'$.
 But $\qr_{-\infty}([0,z])$ is a $\ql-$path hence increasing with respect to $\leq{}$, so $Y^+(\ql)\subset Y^+$.
  Moreover, $\qr_{-\infty}([z,\qn])$ is a $\qm-$path, so $z'$ has to be in $\qn-Y^+$.
   Hence, $z'$ has to run over the set $Y^+(\ql)\cap(\qn-Y^+)\subset Y^+\cap(\qn-Y^+)$.

   \par Actually, the image by $\qr_{-\infty}$ of a segment $[x,y]$ with $(x,y)\in\SHI\times_\leq{}\SHI$ and $d^v(x,y)=\ql\in Y^{++}$ is a Hecke path of shape $\ql$ with respect to $-C^v_f$ \cite[th. 6.2]{GR08}. Hence the following results:



 \begin{lemm}\label{2.3} a) For $\ql\in Y^{++}$ and $w\in W^v$, $w\ql\in\ql-Q^\vee_+$, \ie $w\ql\leq{}_{Q^\vee}\,\ql$.

 \par b) Let $\qp$ be a Hecke path of shape $\ql\in Y^{++}$ with respect to $-C^v_f$, from $y_0\in Y$ to $y_1\in Y$.
  Then $\ql=\qp'(0)^{++}=\qp'(1)^{++}$, $\qp'(0)\leq{}_{Q^\vee}\,\ql$, 
  $\qp'(0)\leq{}_{Q^\vee_\R}\,(y_1-y_0)\leq{}_{Q^\vee_\R}\,\qp'(1)\leq{}_{Q^\vee}\,\ql$ and $y_1-y_0 \leq{}_{Q^\vee}\,\ql$.

\par c) If moreover $(\qa_i^\vee)_{i\in I}$ is free, we may replace above $\leq{}_{Q^\vee_\R}$ by $\leq{}_{Q^\vee}$.

 \par d) For $\ql,\qm,\qn\in Y^{++}$, if $m_{\ql,\qm}(\qn)>0$, then $\qn\in\ql+\qm-Q^\vee_+$ \ie $\qn\leq{}_{Q^\vee}\,\ql+\qm$.
 \end{lemm}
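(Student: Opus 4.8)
The overall strategy is to get (a) from a length induction in the Coxeter group $W^v$, to derive (b) from a careful bookkeeping of the foldings of a Hecke path, and to read off (c) and (d) from (a)--(b). For \emph{(a)}, I argue by induction on $\ell(w)$; the case $w=1$ is trivial, and if $\ell(w)\geq 1$ I choose $i\in I$ with $\ell(ws_i)=\ell(w)-1$ and set $w'=ws_i$, so that $w=w's_i$ with $\ell(w's_i)>\ell(w')$. This forces $w'(\qa_i)\in\QF^+$, hence $w'(\qa_i^\vee)\in Q^\vee_+$ (the coroot of a positive real root is positive, cf.\ \cite{K90}). Since $\ql\in\overline{C^v_f}\cap Y\subset\overline{C^v_f}\cap P^\vee$ one has $\qa_i(\ql)\in\Z_{\geq 0}$, and from $w\ql=w'\ql-\qa_i(\ql)\,w'(\qa_i^\vee)$ one gets $\ql-w\ql=(\ql-w'\ql)+\qa_i(\ql)\,w'(\qa_i^\vee)\in Q^\vee_+$ by the induction hypothesis for $w'$.

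For \emph{(b)}, write $\pi=\pi(\ql,y_0,\underline\tau,\underline a)$ as in \ref{suse:Paths} with $\tau_j\ne\tau_{j+1}$, so each interior break point $a_j$ ($1\le j\le r-1$) is a genuine fold with $\pi'_-(a_j)=\tau_j(\ql)$ and $\pi'_+(a_j)=\tau_{j+1}(\ql)$; put $e_j=\pi'_+(a_j)-\pi'_-(a_j)$. As $\pi$ is a $\ql$-path, $\pi'(0):=\pi'_+(0)=\tau_1(\ql)$ and $\pi'(1):=\pi'_-(1)=\tau_r(\ql)$ lie in $W^v.\ql$, hence $\pi'(0)^{++}=\pi'(1)^{++}=\ql$, and $\pi'(0)\leq_{Q^\vee}\ql$, $\pi'(1)\leq_{Q^\vee}\ql$ by (a). At a fold $a_j$, the chain $(\xi_0=\tau_j(\ql),\dots,\xi_s=\tau_{j+1}(\ql))$ with roots $\beta_1,\dots,\beta_s$ satisfies $\xi_i-\xi_{i-1}=-\beta_i(\xi_{i-1})\beta_i^\vee$, with $-\beta_i(\xi_{i-1})>0$ by ii), $\beta_i^\vee\in Q^\vee_+$ by iv) and \cite{K90}, and $\beta_i(\xi_{i-1})\in\Z$ since $\xi_{i-1}\in W^v.\ql\subset P^\vee$; hence $e_j\in Q^\vee_+$. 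Therefore $\tau_1(\ql)\leq_{Q^\vee}\tau_2(\ql)\leq_{Q^\vee}\dots\leq_{Q^\vee}\tau_r(\ql)$, and since $y_1-y_0=\sum_{j=1}^r(a_j-a_{j-1})\tau_j(\ql)$ is a convex combination of the $\tau_j(\ql)$, this monotone chain gives $\pi'(0)\leq_{Q^\vee_\R}y_1-y_0\leq_{Q^\vee_\R}\pi'(1)$.

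The last assertion of (b), the \emph{integral} inequality $y_1-y_0\leq_{Q^\vee}\ql$, is the real point and the step I expect to be the main obstacle: unlike its $\leq_{Q^\vee_\R}$ analogue it is not formal, because a convex combination of integral points need not be integral. Abel summation transforms $\ql-(y_1-y_0)=\sum_{j=1}^r(a_j-a_{j-1})(\ql-\tau_j(\ql))$ into $\ql-(y_1-y_0)=(\ql-\pi'(1))+\sum_{j=1}^{r-1}a_j e_j$, and since $\ql-\pi'(1)\in Q^\vee_+$ by (a) it suffices to show $a_j e_j\in Q^\vee_+$ for each $j$, which I prove by induction on $j$. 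Put $P=y_0-\sum_{k<j}a_k e_k$; by induction each $a_k e_k\in Q^\vee_+\subset Q^\vee\subset Y$, so $P\in Y$, and a direct computation gives $a_j\tau_j(\ql)=\pi(a_j)-P$. By iii) each wall $M_i=M(\beta_i,-\beta_i(\pi(a_j)))$ passes through $\pi(a_j)$; let $\widehat r_i=r_{M_i}\in W^a$ be the affine reflection fixing $\pi(a_j)$, with linear part $r_{\beta_i}$. An induction on $i$ then gives $a_j\xi_i=\pi(a_j)-g_i(P)$ with $g_i=\widehat r_i\widehat r_{i-1}\cdots\widehat r_1\in W^a$ (and $g_0=\mathrm{id}$). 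Since $W^a\subset\qn(N)$ stabilizes $Y$, one has $g_{i-1}(P)\in Y\subset P^\vee$, whence $a_j\beta_i(\xi_{i-1})=\beta_i(\pi(a_j))-\beta_i(g_{i-1}(P))\in\Z$; being $>0$ and multiplying $\beta_i^\vee\in Q^\vee_+$, this gives $a_j e_j=\sum_i(-a_j\beta_i(\xi_{i-1}))\beta_i^\vee\in Q^\vee_+$, completing (b). For \emph{(c)}: when $(\qa_i^\vee)_{i\in I}$ is free, $Q^\vee\cap Q^\vee_{\R+}=Q^\vee_+$, and by the computations above $(y_1-y_0)-\pi'(0)$ and $\pi'(1)-(y_1-y_0)$ both lie in $Q^\vee$, so both $\leq_{Q^\vee_\R}$ in (b) upgrade to $\leq_{Q^\vee}$.

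For \emph{(d)}: as recalled in \ref{2.2a}, $m_{\ql,\qm}(\qn)>0$ yields $z\in\SHI^+_0$ with $d^v(0,z)=\ql$, $d^v(z,\qn)=\qm$ and $z':=\qr_{-\infty}(z)\in Y$. By \cite[th.~6.2]{GR08}, $\qr_{-\infty}([0,z])$ is a Hecke path of shape $\ql$ from $0$ to $z'$ and $\qr_{-\infty}([z,\qn])$ a Hecke path of shape $\qm$ from $z'$ to $\qr_{-\infty}(\qn)=\qn$. Applying (b) to each gives $\ql-z'\in Q^\vee_+$ and $\qm-(\qn-z')\in Q^\vee_+$; adding these yields $\ql+\qm-\qn\in Q^\vee_+$, that is $\qn\leq_{Q^\vee}\ql+\qm$.
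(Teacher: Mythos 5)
Your proof is correct, and it is genuinely different from the paper's at the one point where there is something nontrivial to prove.

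For part (a), the paper simply invokes \cite[3.12d]{K90}; your length induction on $w$ (peeling off a right descent $s_i$, using $w'(\qa_i)\in\QF^+\Rightarrow w'(\qa_i^\vee)\in Q^\vee_+$ and $\qa_i(\ql)\in\Z_{\geq 0}$) is exactly the standard proof of that fact, so this is the same argument unfolded.

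For part (b), the key step is the \emph{integral} inequality $y_1-y_0\leq_{Q^\vee}\ql$, and here your route diverges from the paper's. The paper cites \cite[5.3.3]{GR08}: any Hecke path of shape $\ql$ is transformed into the straight path $t\mapsto y_0+t\ql$ by repeated application of the root operators $e_{\qa_i}$, $\widetilde e_{\qa_i}$, and each operator shifts the endpoint by $\qa_i^\vee$ or by $0$, whence the result. This is a crystal/Littelmann-path argument. You instead prove $a_j e_j\in Q^\vee_+$ at each fold directly, via Abel summation plus the observation that the affine reflections $\widehat r_k\in W^a$ through $\pi(a_j)$ stabilize $Y$, so that $a_j\beta_i(\xi_{i-1})=\beta_i(\pi(a_j))-\beta_i(g_{i-1}(P))\in\Z$. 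I checked the induction: $a_j\tau_j(\ql)=\pi(a_j)-P$ with $P=y_0-\sum_{k<j}a_k e_k\in Y$, and $a_j\xi_i=\pi(a_j)-g_i(P)$ propagates correctly because $\widehat r_i(x)=\pi(a_j)+r_{\beta_i}(x-\pi(a_j))$. So your proof is a valid self-contained alternative that avoids the root-operator machinery; what it buys is elementary transparency, at the cost of being a bit longer than a citation. Parts (c) and (d) match the paper's reasoning (and your integrality claims in (c) do follow from the Abel-summation identities, since each $e_j\in Q^\vee$ and each $a_j e_j\in Q^\vee$, so $(1-a_j)e_j\in Q^\vee$ as well).
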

\begin{NB} By d) above, if $x\leq{}z\leq{}y$ in $\SHI_0$, then $d^v(x,y)\leq{}_{Q^\vee}d^v(x,z)+d^v(z,y)$.
\end{NB}
\begin{proof} a) By definition, for $\ql\in Y$, $w\ql\in\ql+Q^\vee$, hence a) follows from \cite[3.12d]{K90} used in a realization where $(\qa_i^\vee)_{i\in I}$ is free.

\par b) By definition of Hecke paths in \ref{de:Hecke}, $\ql=\qp'(0)^{++}=\qp'(1)^{++}$.
Moreover, $\forall t\in[0,1]$, $\ql=\qp_-'(t)^{++}=\qp_+'(t)^{++}$
and we know how to get $\qp_+'(t)$ from $\qp_-'(t)$ by successive reflections; this proves that $\qp'_+(t)\in\qp'_-(t)+Q^\vee_{\R+}$.
 By integrating the locally constant function $\qp'(t)$, we get $\qp'(0)\leq{}_{Q^\vee_\R}\,(y_1-y_0)\leq{}_{Q^\vee_\R}\,\qp'(1)\leq{}_{Q^\vee_\R}\,\ql$.

\par It is proved (but not stated) in \cite[5.3.3]{GR08} that any Hecke path of shape $\ql$ starting in $y_0\in Y$  can be transformed in the path $\qp_\ql(t)=y_0+\ql t$ by applying successively the operators $e_{\qa_i}$ or $\widetilde e_{\qa_i}$ for $i\in I$; moreover $e_{\qa_i}(\qp)(1)=\qp(1)+\qa_i^\vee$ and $\widetilde e_{\qa_i}(\qp)(1)=\qp(1)$, hence $y_1-y_0 \leq{}_{Q^\vee}\,\ql$.

\par c) By b) $y_1-y_0-\qp'(0)\in Q^\vee_{\R+}\cap Q^\vee=Q^\vee_{+}$, so $\qp'(0)\leq{}_{Q^\vee}\,(y_1-y_0)$. Idem for $y_1-y_0 \leq{}_{Q^\vee}\,\qp'(1)$.

 \par d) If $m_{\ql,\qm}(\qn)>0$ we have an Hecke path of shape $\ql$ (resp. $\qm$) from $0$ to $z'$ (resp. from $z'$ to $\qn$). So d) follows from b).
\end{proof}

 \begin{prop}\label{2.4} Suppose $(\qa_i^\vee)_{i\in I}$ free in $V$. Then for all $\ql,\qm,\qn\in Y^{++}$,  $m_{\ql,\qm}(\qn)$ is finite.
\end{prop}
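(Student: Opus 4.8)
The plan is to bound the number of type-$0$ vertices $z$ with $d^v(0,z)=\ql$ and $d^v(z,\qn)=\qm$ by using the retraction $\qr_{-\infty}$ and the fact (recalled in \ref{2.2a}.2, from \cite[th.~6.2]{GR08}) that $\qr_{-\infty}([z,\qn])$ is a Hecke path of shape $\qm$ with respect to $-C^v_f$ ending at $\qn$. First I would observe that $m_{\ql,\qm}(\qn)=\sum_{z'}\,N_{-\infty}(\qm,z',\qn)$, where $z'$ ranges over $Y^+(\ql)\cap(\qn-Y^+)$; since $Y^+(\ql)=\qr_{-\infty}(\{z\in\SHI_0^+\mid d^v(0,z)=\ql\})$ is the set of endpoints of the images under $\qr_{-\infty}$ of segments $[0,z]$ with $d^v(0,z)=\ql$, and each such image is a $\ql$-path, Lemma \ref{2.3}b gives $z'\leq{}_{Q^\vee}\ql$; combined with $z'\in\qn-Y^+$ and $\qn-z'\leq{}_{Q^\vee}\qm$ (again Lemma \ref{2.3}b, applied to the Hecke path from $z'$ to $\qn$) we get that $z'$ lies in the finite set $\{z'\in Y\mid 0\leq{}_{Q^\vee}z'\leq{}_{Q^\vee}\ql,\ 0\leq{}_{Q^\vee}\qn-z'\leq{}_{Q^\vee}\qm\}$; here finiteness uses that $(\qa_i^\vee)_{i\in I}$ is free, so $\leq{}_{Q^\vee}$ is a genuine order on $Y\subset P^\vee$ and an interval $[a,b]_{Q^\vee}=\{c\mid a\leq{}_{Q^\vee}c\leq{}_{Q^\vee}b\}$ is finite (it injects into the finite set of points of the lattice $Q^\vee$ between $a$ and $b$). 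Actually one only needs $0\leq{}_{Q^\vee}z'\leq{}_{Q^\vee}\ql$, which already confines $z'$ to a finite set.

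So it remains to prove that, for each fixed such $z'$, the number $N_{-\infty}(\qm,z',\qn)$ of $z\in\SHI_0^+$ with $d^v(0,z)=\ql$, $d^v(z,\qn)=\qm$ and $\qr_{-\infty}(z)=z'$ is finite. Here is where the structure of the hovel comes in: I would fix an apartment $A_{-\infty}$ containing the sector-germ $\g S_{-\infty}$ and the point $\qn$ (exists by (MA3), since $\{\qn\}$ is a face), and think of each competitor $z$ as being joined to $\qn$ by the segment $[z,\qn]$, whose $\qr_{-\infty}$-image is a Hecke path $\qp$ of shape $\qm$ from $z'$ to $\qn$. Since $\qr_{-\infty}$ is constructed by folding successive apartments containing $\g S_{-\infty}$, the preimage $z$ of $z'$ under $\qr_{-\infty}$, together with the lift of the Hecke path $\qp$ back into the hovel to the actual geodesic segment $[z,\qn]$, is governed by a sequence of ``folding'' choices at the finitely many break points of $\qp$; and at each break point the number of ways to unfold a panel corresponds to a choice among the at most $\max_M(1+q_M)$ chambers through a panel. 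The point is that the paths $\qp$ obtained are Hecke paths of shape $\qm$ starting at $z'$ and ending at $\qn$, and there are only finitely many such combinatorial paths (bounded length, bounded denominators since all vertices lie in the discrete lattice $Y$ and directions in the finite set $W^v.\qm$ between consecutive type-$0$ vertices of $A$ crossed), and for each the number of lifts is finite by finite thickness.

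The main obstacle, and the part I would have to argue most carefully, is precisely the finiteness of the set of combinatorial Hecke paths of shape $\qm$ from $z'$ to $\qn$: a priori a Hecke path can have arbitrarily many folds. The key input is that $\qr_{-\infty}([z,\qn])$ is not an arbitrary Hecke path but the image of an actual geodesic segment in the hovel, and its folding points are crossings of walls of $\A$; between two consecutive type-$0$ vertices along the path the wall-crossings are confined to the bounded polytope $[z',\qn]$ of the apartment (or rather to the bounded region $\{v\in V\mid v\in z'+\sht,\ \qn\in v+\sht\}$ cut out by the preorder constraints coming from Lemma \ref{2.3}b applied to all subpaths), and in a semi-discrete apartment only finitely many walls meet a bounded set. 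Thus the number of folding points, the directions between them, and the break times (rational with bounded denominator, as successive type-$0$ vertices of $A$ on the path differ by elements of $Y$) are all bounded, giving finitely many combinatorial Hecke paths. Alternatively — and this is the cleanest route — I would invoke \cite[5.3.3]{GR08} exactly as in the proof of Lemma \ref{2.3}b: every Hecke path of shape $\qm$ starting at $z'$ is obtained from the straight path $\qp_\qm(t)=z'+\qm t$ by a finite sequence of root operators $e_{\qa_i},\widetilde e_{\qa_i}$, and since each $e_{\qa_i}$ raises the endpoint by $\qa_i^\vee$ while $\widetilde e_{\qa_i}$ fixes it, the constraint $\qp(1)=\qn$ with $\qn-z'\leq{}_{Q^\vee}\qm$ (and $(\qa_i^\vee)$ free) bounds the total number of $e_{\qa_i}$'s applied; a standard argument then bounds the whole sequence and hence produces only finitely many Hecke paths, each with finitely many lifts into $\SHI$ by finite thickness. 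Putting the two finiteness statements together yields $m_{\ql,\qm}(\qn)<\infty$.
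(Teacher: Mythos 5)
Your three-step decomposition is exactly the paper's: (i) bound $z'=\qr_{-\infty}(z)$ to a finite set via Lemma \ref{2.3}b and freeness of $(\qa_i^\vee)_{i\in I}$; (ii) for fixed $z'$, bound the set of Hecke paths of shape $\qm$ from $z'$ to $\qn$; (iii) for each such path, bound the number of lifting segments by finite thickness, which is \cite[th.\ 6.3]{GR08}. The paper dispatches step (ii) by citing \cite[cor.\ 5.9]{GR08} directly; you correctly identify (ii) as the crux but neither of your two sketches actually closes it. The ``geometric'' route asserts that ``in a semi-discrete apartment only finitely many walls meet a bounded set'' --- this is false and is in fact the central subtlety of the whole hovel setup: semi-discreteness is a condition on each one-parameter family $M(\qa,\cdot)$ separately, while the union $\shm$ over $\qa\in\QF$ is \emph{not} locally finite since $\QF$ is infinite (the introduction says this explicitly). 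The ``root operator'' route via \cite[5.3.3]{GR08} is the right idea, but your bound on the number of operators applies only to the $e_{\qa_i}$ (which raise the endpoint by $\qa_i^\vee$); the $\widetilde e_{\qa_i}$ fix the endpoint, so the endpoint constraint alone does not bound their number, and your ``a standard argument then bounds the whole sequence'' is precisely the content of \cite[cor.\ 5.9]{GR08}, which you would need to cite or reprove.

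There is also a small slip in step (i): the inequality $0\leq_{Q^\vee}z'$ that you assert (and then lean on when you say one ``only needs $0\leq_{Q^\vee}z'\leq_{Q^\vee}\ql$'') is not true in general --- $z'\in\sht$, but the Tits cone is not contained in $Q^\vee_{\R+}$. The bound that Lemma \ref{2.3}b actually gives, and that the paper uses, is the two-sided bound $z'\leq_{Q^\vee}\ql$ together with $\qn-z'\leq_{Q^\vee}\qm$, i.e.\ $z'\in(\ql-Q^\vee_+)\cap(\qn-\qm+Q^\vee_+)$, which is finite by freeness. You do derive both of these, so this is easily repaired; but as written, the set you display is not one $z'$ is known to lie in.
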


\begin{NB} Actually  we may replace the condition $(\qa_i^\vee)_{i\in I}$ free by $(\qa_i^\vee)_{i\in I}$ $\R^+-$free. 
\end{NB}
\begin{proof} We have to count the $z\in\SHI_0^+$ such that $d^v(0,z)=\ql$ and $d^v(z,\qn)=\qm$.
 We set $z'=\qr_{-\infty}(z)$. By lemma \ref{2.3}b, $z'\in \ql-Q^\vee_+$ and $\qn\in z'+\qm-Q^\vee_+$, hence $z'$ is in $(\ql-Q^\vee_+)\cap(\qn-\qm+Q^\vee_+)$ which is finite as $(\qa_i^\vee)_{i\in I}$ is free or $\R^+-$free.
 So, we fix now $z'$. By \cite[cor. 5.9]{GR08} there is a finite number of Hecke paths $\qp'$ of shape $\qm$ from $z'$ to $\qn$. So, we fix now $\qp'$.
 And by [\lc th. 6.3] (see also \ref{4.10}, \ref{4.11}) there is a finite number of segments $[z,\qn]$ retracting to $\qp'$; hence the number of $z$ is finite.
\end{proof}

\begin{theo}\label{2.5} Suppose $(\qa_i^\vee)_{i\in I}$ free or $\R^+-$free, then $\shh_R$ is an algebra. 
\end{theo}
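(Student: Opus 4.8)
The plan is to verify three things: that for $\qf,\psi\in\shh_R$ the convolution $\qf*\psi$ is a well-defined function $Y^{++}\to R$ (each defining sum being finite), that $\shh_R$ is stable under $*$, and that $c_0$ is a unit; $R$-bilinearity and associativity then present no new difficulty, except that one must still check that the triple sums occurring in $(\qf*\psi)*\chi$ are finite.

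First I would fix $\qf,\psi\in\shh_R$ with $supp(\qf)\subset\bigcup_{i=1}^m(\ql_i-Q^\vee_+)\cap Y^{++}$ and $supp(\psi)\subset\bigcup_{j=1}^n(\qm_j-Q^\vee_+)\cap Y^{++}$. Grouping the sum of \ref{2.2a} according to the value of the pair $(d^v(0,z),d^v(z,\qn))$, and using that this pair takes a given value $(\ql,\qm)$ exactly $m_{\ql,\qm}(\qn)$ times, one gets, for $\qn\in Y^{++}$,
$$(\qf*\psi)(\qn)=\sum_{\ql,\qm\in Y^{++}}\qf(\ql)\,\psi(\qm)\,m_{\ql,\qm}(\qn).$$
Every coefficient $m_{\ql,\qm}(\qn)$ is finite by Proposition \ref{2.4}, so it suffices to show that only finitely many pairs $(\ql,\qm)\in supp(\qf)\times supp(\psi)$ give $m_{\ql,\qm}(\qn)>0$. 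In that case Lemma \ref{2.3}d gives $\qn\leq_{Q^\vee}\ql+\qm$; combining this with $\ql\leq_{Q^\vee}\ql_i$ and $\qm\leq_{Q^\vee}\qm_j$ (valid for suitable $i,j$, since $\qf$ and $\psi$ have almost finite support) and adding, one obtains $\qn-\qm_j\leq_{Q^\vee}\ql\leq_{Q^\vee}\ql_i$ and, symmetrically, $\qn-\ql_i\leq_{Q^\vee}\qm\leq_{Q^\vee}\qm_j$. Thus $\ql$ lies in one of the finitely many $Q^\vee$-order intervals $[\qn-\qm_j,\ql_i]_{Q^\vee}$, and $\qm$ in one of $[\qn-\ql_i,\qm_j]_{Q^\vee}$, and each of these is finite precisely because $(\qa^\vee_i)_{i\in I}$ is free or $\R^+$-free — this is the same elementary observation already used in the proof of Proposition \ref{2.4}. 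Hence the displayed sum is finite and $\qf*\psi$ is well defined.

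For stability, the implication $m_{\ql,\qm}(\qn)>0\ \Rightarrow\ \qn\leq_{Q^\vee}\ql+\qm\leq_{Q^\vee}\ql_i+\qm_j$ shows that $supp(\qf*\psi)\subset\bigcup_{i,j}(\ql_i+\qm_j-Q^\vee_+)\cap Y^{++}$, so $\qf*\psi\in\shh_R$. That $c_0$ is a two-sided unit was noted in \ref{2.1}, and $R$-bilinearity is clear. For associativity one repeats the bookkeeping on triples: for a fixed pair $(x,y)$ with $d^v(x,y)=\qn$ one counts the chains $x\leq z\leq w\leq y$ with $d^v(x,z),d^v(z,w),d^v(w,y)$ in the supports of $\qf,\psi,\chi$; applying the N.B.\ after Lemma \ref{2.3} twice gives $\qn\leq_{Q^\vee}d^v(x,z)+d^v(z,w)+d^v(w,y)$, and the support bounds then confine all three vectorial distances to finite $Q^\vee$-intervals exactly as above, while the number of chains with prescribed vectorial distances is finite by repeated application of Proposition \ref{2.4}. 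Hence $((\qf*\psi)*\chi)(x,y)$ and $(\qf*(\psi*\chi))(x,y)$ are both equal to this common finite sum. The only genuinely new ingredient is the finiteness of $Q^\vee$-order intervals under the freeness (or $\R^+$-freeness) hypothesis; everything else is a repackaging of Lemma \ref{2.3} and Proposition \ref{2.4}, and I expect no real obstacle beyond keeping the index bookkeeping straight.
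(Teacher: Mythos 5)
Your proof is correct and follows essentially the same route as the paper's: it uses Lemma \ref{2.3}d together with the finiteness of the $Q^\vee$-order intervals $(\qn-\qm_j+Q^\vee_+)\cap(\ql_i-Q^\vee_+)$ under the freeness (or $\R^+$-freeness) hypothesis to bound the contributing pairs $(\ql,\qm)$, and then reads off the support bound $\bigcup_{i,j}(\ql_i+\qm_j-Q^\vee_+)$ for stability. The only difference is that you also spell out the associativity bookkeeping, which the paper disposes of formally in \ref{2.1}; that extra care is harmless and correct.
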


\begin{proof} We saw that for $\ql,\qm,\qn\in Y^{++}$,  $m_{\ql,\qm}(\qn)$ is finite; hence $c_\ql*c_\qm$ is well defined (eventually as an infinite formal sum).
 Let us consider $\qf,\psi\in\shh_R$:  $supp(\qf)\subset\cup_{i=1}^m\,(\ql_i-Q^\vee_+)$, $supp(\psi)\subset\cup_{j=1}^n\,(\qm_j-Q^\vee_+)$. Let $\qn\in Y^{++}$.
  If $m_{\ql,\qm}(\qn)>0$ with $\ql\in supp(\qf)$, $\qm\in supp(\psi)$ (hence $\ql\in \ql_i-Q^\vee_+$, $\qm\in \qm_j-Q^\vee_+$ for some $i,j$), we have $\ql+\qm\in\qn+Q^\vee_+$ by lemma \ref{2.3}d.
  So $\ql\in (\qn-\qm+Q^\vee_+)\cap(\ql_i-Q^\vee_+)\subset (\qn-\qm_j+Q^\vee_+)\cap(\ql_i-Q^\vee_+)$, a finite set.
   For the same  reasons $\qm$ is in a finite set, so $\qf*\psi$ is well defined.

   \par With the above notations $\qn\in(\ql+\qm-Q^\vee_+)\subset\cup_{i,j}\,(\ql_i+\qm_j-Q^\vee_+)$, so $\qf*\psi\in\shh_R$.
\end{proof}

\begin{defi}\label{2.6} $\shh_R=\shh_R(\SHI,G)$ is the {\it  spherical Hecke algebra} (with coefficients in $R$) associated to the hovel $\SHI$ and its strongly transitive automorphism group $G$.
\end{defi}

\begin{rema*} We shall now investigate $\shh_R$ and some other possible convolution algebras in $\widehat\shh_R$ by separating the cases: finite, indefinite and affine.
\end{rema*}

  \subsection{Finite case}\label{2.7} In this case $\QF$ and $W^v$ are finite, $(\qa_i^\vee)_{i\in I}$ is free, $\sht=V$ and the relation $\leq{}$ is trivial.
  The hovel $\SHI=\SHI^+$ is a locally finite Bruhat-Tits building.
  

\par Let $\qr$ be the half sum of positive roots.
As $2\qr\in Q$ and $\qr(\qa_i^\vee)=1$, $\forall i\in I$, we see that an almost finite set in $Y^{++}$ is always finite. So $\shh_R$ and $\shh_R^f$ are equal.

\par The algebra  $\shh_\C$ was already studied by I. Satake in \cite{Sa63}. Its close link with buildings is explained in \cite{P06}.
The algebra $\shh_\Z$ is the spherical Hecke ring of \cite{KLM08}, where the interpretation of $m_{\ql,\qm}(\qn)$ as a number of triangles in $\SHI$ is already given.

\par $\widehat\shh_R$ is not an algebra as \eg $m_{\ql,(-w_0)\ql}(0)\not=0$ $\forall\ql\in Y^{++}$ (where $w_0$ is the greatest element in $W^v$).

 \subsection{Indefinite case}\label{2.9}

 \begin{lemm*} Suppose now $\QF$ associated to an indefinite indecomposable generalized Cartan matrix. Then there is in $\QD^+_{im}$ an element $\qd$ (of support $I$) such that $\qd(\qa_i^\vee)<0$, $\forall i\in I$ and a basis $(\qd_i)_{i\in I}$ of the real vector space $Q_\R$ spanned by $\QF$  such that $\qd_i(\sht)\geq{}0$, $\forall i\in I$.
\end{lemm*}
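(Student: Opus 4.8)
The plan is to work in the finite-dimensional real vector space $Q_\R \subseteq V^*$ spanned by the real roots $\QF$, using the theory of the generalized Cartan matrix $\M$ and its Kac--Moody Lie algebra. Since $\M$ is indefinite and indecomposable, it is of \emph{indefinite type} in the sense of \cite{K90}: there is no vector $v$ with $\M v \geq 0$ componentwise that is also $\geq 0$. The key input is the Perron--Frobenius-type classification (Kac, Theorem 4.3): for an indecomposable GCM, exactly one of three cases holds (finite, affine, indefinite), and in the indefinite case there exists $u \in \Q^I$ with $u > 0$ (all coordinates strictly positive) and $\M u < 0$ (all coordinates strictly negative). After clearing denominators, this produces $\qd = \sum_i u_i \qa_i \in Q^+$ with $\qd(\qa_i^\vee) = (\M^t u)_i < 0$ for all $i \in I$; since $\qd$ is a positive integral combination with full support $I$ and the diagonal entries of $\M$ are $2 > 0$ while $\qd(\qa_i^\vee)<0$, one checks $\qd \notin \QF$, and using that $\qd(\qa_i^\vee)<0$ for all $i$ together with the characterization of imaginary roots in \cite[Ch.~5]{K90} (an element of $Q^+$ with connected support whose pairing against every simple coroot is $\leq 0$ lies in $\QD^+_{im}$), we get $\qd \in \QD^+_{im}$ of support $I$.

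For the second assertion, the dual statement to ``$\M$ indefinite'' should furnish a linear functional. Concretely, I would apply the Perron--Frobenius classification to the \emph{transpose} $\M^t$ (which is the GCM of the Langlands dual and is again indefinite indecomposable): there exists $w \in \Q^I$ with $w > 0$ and $\M^t w < 0$. Set $\qd_i^{\,0} = \sum_{j} w_j \qa_j^\vee$... but this only gives one functional, not a basis. Instead, the right approach is: the cone $\{\xi \in Q_\R \mid \xi(\sht) \geq 0\}$ has nonempty interior precisely because $\sht$ is a ``proper'' (pointed-up-to-its-lineality) convex cone. More precisely, the fundamental chamber $C^v_f$ is an open cone, so its closure spans the subspace $\mathrm{vect}(\sht)$; the walls of $C^v_f$ are $\mathrm{Ker}(\qa_i)$, so the functionals vanishing nonnegatively on $\overline{C^v_f}$ are exactly the $\R_{\geq 0}$-combinations of the $\qa_i$, which form a cone of dimension $|I| = \dim Q_\R$. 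But we need functionals nonnegative on the \emph{whole Tits cone} $\sht$, not just on $\overline{C^v_f}$. Here the key fact (\cite[Ch.~5]{K90}, or \cite{MP89}) is that for indefinite type $\sht \neq V$ and in fact $\sht \cap (-\sht) = \{v : \qa_i(v) = 0\,\forall i\}$, and the cone $\sht$ is contained in a proper convex cone. I would pick $\xi_0$ in the interior of the dual cone $\sht^\vee = \{\xi \mid \xi(\sht) \geq 0\}$ — nonempty because $\sht$ is pointed in $Q_\R$ — and then choose a small simplex of functionals $\qd_i = \xi_0 + \varepsilon \eta_i$ where $(\eta_i)$ is any basis of $Q_\R^*$ and $\varepsilon$ is small enough that each $\qd_i$ stays in the (open) dual cone; these form a basis of $Q_\R$ (identifying $Q_\R$ with its double dual) with $\qd_i(\sht) \geq 0$.

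The main obstacle I anticipate is pinning down precisely why $\sht^\vee$ has nonempty interior in $Q_\R^*$ — equivalently, why $\overline{\sht}$ spans $Q_\R$ but is not all of $Q_\R$ and contains no line through the relevant quotient. This is where the indefinite (as opposed to affine) hypothesis is essential: in the affine case $\sht$ is a half-space union a hyperplane and its dual cone is a ray, giving no basis. I would extract the needed statement from the structure theory: Kac's Proposition 5.8 and the discussion of the Tits cone show that for non-finite type, $\overline{\sht}$ is the dual cone of the convex hull of $W^v$-orbit of the simple coroots' cone, and for indefinite type the imaginary cone has nonempty interior, which by duality forces $\sht^\vee$ to be ``thin'' but still full-dimensional after passing to $Q_\R$. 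Alternatively — and this may be the cleanest route — I would build the basis directly from $\qd$: since $\qd(\qa_i^\vee) < 0$ for all $i$, the functional $-\qd$ (viewed in $Q_\R$ via the pairing, if the $\qa_i^\vee$ span a complement) is strictly positive on all simple coroots hence on $Q^\vee_+$, and one shows $\sht \subseteq \{v : (-\qd)(v) \geq 0\}$ roughly because $\sht = W^v \cdot \overline{C^v_f}$ and $-\qd$ is ``dominant''; then perturb $-\qd$ within a neighborhood to a basis, checking the nonnegativity on $\sht$ is preserved by choosing the perturbation to respect the finitely many linear conditions defining the relevant faces. I expect the write-up to lean on \cite[Ch.~4--5]{K90} for the Perron--Frobenius dichotomy and the shape of the Tits cone, and the perturbation argument to be routine once the full-dimensionality of $\sht^\vee$ is secured.
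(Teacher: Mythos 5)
Your first part, using Kac's Theorem~4.3 to produce $\qd\in\QD^+_{im}$ of full support with $\qd(\qa_i^\vee)<0$ for all $i$, is exactly the paper's route. The second part, however, has a genuine gap that you yourself diagnose: the perturbation strategy needs the dual cone $\{\xi\in Q_\R\mid\xi(\sht)\geq 0\}$ to have nonempty interior relative to $Q_\R$, and you never secure this (it is false in the affine case, where that cone degenerates to a ray, so the indefinite hypothesis must enter at a concrete step and you do not say how). Your ``alternative route'' also contains a sign error: the correct and relevant fact is $\qd(\sht)\geq 0$ for $\qd\in\QD^+_{im}$ (Kac~5.8), not $(-\qd)(\sht)\geq 0$, and the mechanism is not ``dominance'' of $-\qd$ against the coroots but rather that for $v=wv'$ with $v'\in\overline{C^v_f}$ one has $\qd(v)=(w^{-1}\qd)(v')\geq 0$ because $w^{-1}\qd$ is again a positive imaginary root, hence in $Q^+$, hence nonnegative on $\overline{C^v_f}$.

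The paper sidesteps the nonempty-interior question entirely by producing the basis explicitly inside $\QD^+_{im}$: after replacing $\qd$ by $3\qd$ (still in $\QD^+_{im}$ by Kac~5.5), one gets $(\qd+\qa_i)(\qa_j^\vee)<0$ for all $i,j$ (the only problematic pairing, $i=j$, is handled by the factor~$3$), so each $\qd+\qa_i$ is again a positive imaginary root of full support, and the set $\{\qd\}\cup\{\qd+\qa_i\mid i\in I\}$ spans $Q_\R$ since its differences recover the $\qa_i$. Kac~5.8 then gives nonnegativity of all these roots on $\sht$, and one extracts a basis of size $|I|$. This explicit construction is precisely the step your argument is missing: rather than locating a point in the interior of an abstractly-defined dual cone and perturbing, the paper exhibits enough linearly independent imaginary roots directly.
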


\begin{proof} 
 Any $\qd\in\QD^+_{im}$ takes positive values on $\sht$ \cite[5.8]{K90}.
 Now, in the indefinite case, there is $\qd\in\QD^+_{im}\cap(\oplus_{i\in I}\,\R_{>0}.\qa_i)$ such that $\qd(\qa_i^\vee)<0$, $\forall i\in I$  [\lc 4.3], hence $\qd+\qa_i\in {\QD^+}$, $\forall i\in I$.
  Replacing eventually $\qd$ by $3\qd$ [\lc 5.5], we have $(\qd+\qa_i)(\qa_j^\vee)<0$, $\forall i,j\in I$, hence $\qd+\qa_i\in {\QD^+_{im}}$.
  The wanted basis is inside $\{\qd\}\cup\{\qd_0+\qa_i\mid i\in I\}$.
\end{proof}


    \par The existence of $\qd\in\QD^+_{im}$ as in the lemma proves that $(\qa_i^\vee)_{i\in I}$ is $\R^+-$free. So $\shh_R$ is an algebra.
    The following example \ref{2.10} proves that  $\shh_R^f$ is in general not a subalgebra.

    \par If  $(\qa_i)_{i\in I}$ generates (\ie is a basis of) $V^*$, $\widehat\shh_R$ is also an algebra (the {\it formal  spherical Hecke algebra}):
    Let $\qn\in Y^{++}$, we have to prove that there is only a finite number of pairs $(\ql,\qm)\in (Y^{++})^2$ such that $m_{\ql,\qm}(\qn)>0$.
    Let $z'$ be as in the proof of \ref{2.4}. We saw in \ref{2.2a} that $z'\in Y^+\cap(\qn-Y^+)=Y\cap\sht\cap(\qn-\sht)$.
    By the lemma, $\sht\cap(\qn-\sht)$ is bounded, hence $Y\cap\sht\cap(\qn-\sht)$ is finite.
    So we may fix $z'$. Now $\ql\in z'+Q^\vee_+$ hence (for $\qd$ as in the lemma) $\qd(\ql)\leq{}\qd(z')$; as $\qa_i(\ql)\in\Z_{>0}$ $\forall i\in I$ and $\qd\in \oplus_{i\in I}\,\R_{>0}.\qa_i$ this gives only a finite number of possibilities for $\ql$.
    Similarly $\qm\in\qn-z'+Q_+^\vee$ has to be in a finite set.


\par Actually  $\widehat\shh_R$ is often equal to $\shh_R$ when $(\qa_i^\vee)_{i\in I}$ is free and $(\qa_i)_{i\in I}$ generates  $V^*$ (hence the matrix $\M=(\qa_j(\qa_i^\vee))$ is invertible), see the following example \ref{2.10}.

\subsection{An indefinite rank $2$ example}\label{2.10} Let us consider the Kac-Moody matrix $\M=\begin{pmatrix}2 & -3\\-3 & 2\\\end{pmatrix}$. The basis of $\QF$ and $V^*$ is $\{\qa_1,\qa_2\}$ and we consider the dual basis $(\varpi_1^\vee,\varpi_2^\vee)$ of $V$.
 In this basis $\qa_1^\vee=\begin{pmatrix}2\\-3\\\end{pmatrix}$, $\qa_2^\vee=\begin{pmatrix}-3\\2\\\end{pmatrix}$ and the matrices of $r_1$, $r_2$, $r_2r_1$ and $r_1r_2$ are respectively $\begin{pmatrix}-1 & 0\\3 & 1\\\end{pmatrix}$, $\begin{pmatrix}1 & 3\\0 & -1\\\end{pmatrix}$, $M=\begin{pmatrix}8 & 3\\-3 & -1\\\end{pmatrix}$ and $M^{-1}=\begin{pmatrix}-1 & -3\\3 & 8\\\end{pmatrix}$.
 The eigenvalues of $M$ or $M^{-1}$ are $a_\pm{}=(7\pm{}\sqrt{45})/2$.
  In a basis diagonalizing $M$ and $M^{-1}$ we see easily that $(r_2r_1)^n+(r_1r_2)^n=a_n.Id_V$ where $a_n=a_+^n+a_-^n$ is in $\N$ and increasing up to infinity ($a_0=2$, $a_1=7$, $a_2=47$, $a_3=322$,...).

  \par Consider now $\ql=\qm=-\qa^\vee_1-\qa^\vee_2=\begin{pmatrix}1\\1\\\end{pmatrix}$ in $Y^{++}\subset\Z_{\geq{}0}.\varpi^\vee_1\oplus \Z_{\geq{}0}.\varpi^\vee_2$.
  We have $(r_2r_1)^n.\ql+(r_1r_2)^n.\ql=a_n.\ql$. This means that $m_{\ql,\ql}(a_n.\ql)\geq{}N_{\g c_-}(\ql,(r_2r_1)^n\ql,a_n.\ql)\geq{}1$, for all positive $n$ (and the same thing for $N_{-\infty}$).
  So $c_\ql*c_\ql$ is an infinite formal sum.

  \par Actually $(-Q_+^\vee)\cap Y^{++}\supset\Z_{\geq{}0}.5\varpi^\vee_1\oplus \Z_{\geq{}0}.5\varpi^\vee_2$, hence $Y^{++}$ itself is almost finite!

\subsection{An affine rank $2$ example}\label{2.11} Let us consider the Kac-Moody matrix $\M=\begin{pmatrix}2 & -2\\-2 & 2\\\end{pmatrix}$.
The basis of $\QF$ is $\{\qa_1,\qa_2\}$ but we consider a realization $V$ of dimension $3$  for which $\{\qa_1^\vee,\qa_2^\vee\}$ is free and with basis of $V^*$ $\{\qa_o=-\qr,\qa_1,\qa_2\}$.
 More precisely, if $(\varpi_0^\vee,\varpi_1^\vee,\varpi_2^\vee)$ is  the dual basis of $V$, we have  $\qa_1^\vee=\begin{pmatrix}-1\\2\\-2\\\end{pmatrix}$, $\qa_2^\vee=\begin{pmatrix}-1\\-2\\2\\\end{pmatrix}$ and the matrices of $r_1$, $r_2$, $r_1r_2$ and $r_2r_1$ are respectively $\begin{pmatrix}1&1&0\\0&-1 & 0\\0&2 & 1\\\end{pmatrix}$, $\begin{pmatrix}1&0&1\\0&1 & 2\\0&0 & -1\\\end{pmatrix}$, $M=\begin{pmatrix}1&1&3\\0&-1 & -2\\0&2 & 3\\\end{pmatrix}$ and $M^{-1}=\begin{pmatrix}1&3&1\\0&3 & 2\\0&-2 & -1\\\end{pmatrix}$.
 A classical calculus using triangulation tells us that  $(r_2r_1)^n+(r_1r_2)^n=\begin{pmatrix}2&4n^2&4n^2\\0&1 & 0\\0&0 & 1\\\end{pmatrix}$.
 Actually $c=\qa_1^\vee+\qa_2^\vee=-2\varpi_0^\vee\in Q_+^\vee$ is the canonical central element \cite[{\S{}} 6.2]{K90} and the above calculations are peculiar cases of [\lc {\S{}} 6.5].

\par Let's consider now $\ql=\qm=\sum^2_{i=1}\,a_i\varpi_i^\vee\in Y^{++}\subset \oplus^2_{i=1}\,\Z_{\geq{}0}\varpi_i^\vee$.
 We have $(r_2r_1)^n(\ql)+(r_1r_2)^n(\ql)=\ql-2n^2\vert\ql\vert c$ with $\vert\ql\vert=a_1+a_2$.
 This means that $m_{\ql,\ql}(\ql-2n^2\vert\ql\vert c)\geq{}N_{\g c_-}(\ql,(r_2r_1)^n(\ql),\ql-2n^2\vert\ql\vert c)\geq{}1$, $\forall n\in \Z$ (and the same thing for $N_{-\infty}$).
 So $c_\ql*c_\ql$ is an infinite formal sum.

 \par Moreover as $c$ is fixed by $r_1$ and $r_2$, $(r_2r_1)^n(\ql+2n^2\vert\ql\vert c)+(r_1r_2)^n(\ql)=\ql$,  so $m_{\ql+2n^2\vert\ql\vert c,\ql}(\ql)\geq{}1$, $\forall n\in \Z$, and $\widehat\shh_R$ is not an algebra.

 \par Remark also that, if we consider the essential quotient $V^e=V/\R c$, the above calculus tells that $m_{\ql,\ql}(\ql)\geq{}\sum_{n\in\Z}\,N_{\g c_-}(\ql,(r_2r_1)^n(\ql),\ql)$ is infinite if $\vert\ql\vert>0$.

       \subsection{Affine indecomposable case}\label{2.12}
\par We saw in the example \ref{2.11} above that $m_{\ql,\ql}(\ql)$ may be infinite, $\forall\ql\in Y^{++}$ when $(\qa_i^\vee)_{i\in I}$ is not free. So, in this case, $\widehat\shh_R$ seems to contain no algebra except $R.c_0$.

\par Remark also that $(\qa_i^\vee)_{i\in I}$ free is equivalent to $(\qa_i^\vee)_{i\in I}$ $\R^+-$free in the affine indecomposable case as the only possible relation between the $\qa_i^\vee$ is $c=0$ where $c=\sum_{i\in I}\,a_i^\vee.\qa_i^\vee$ (with $a_i^\vee\in\Z_{>0}$ $\forall i\in I$) is the canonical central element.

\par An almost finite subset in $Y^{++}$ is a finite union of subsets like $Y_\ql=(\ql-Q_+^\vee)\cap Y^{++}$.
 Let $\qd$ be the smallest positive imaginary root in $\QD$. Then $\qd(Q_+^\vee)=0$ so $Y_\ql\subset\{y\in Y^{++}\mid\qd(y)=\qd(\ql)\}=Y_\ql'$.
  But $\qd=\sum_{i\in I}\,a_i.\qa_i$ with $a_i\in\Z_{>0}$ $\forall i\in I$, so the image of $Y'_\ql$ in $V^e=V/\R c$ (where $\R c=\cap_{i\in I}\,Ker(\qa_i)$) is finite.
  It is now clear that $Y_\ql$ is a finite union of sets like $\qm-\Z_{\geq{}0}.c$ with $\qm\in Y^{++}$.
   Hence an almost finite subset as defined above is the same as an almost finite union (of double cosets) as defined in \cite{BrK10}.

   \par The algebra $\shh_\C$ is the one introduced by A. Braverman and D. Kazhdan in \cite{BrK10}. We gave above a combinatorial proof that it is an algebra, without algebraic geometry.

\section{The split Kac-Moody case}\label{s3}

\subsection{Situation}\label{3.1} As in \cite{R12} or \cite{R13}, we consider a split Kac-Moody group $\g G$ 
associated to a root generating system (RGS) $\shs=(\M,Y_\shs,(\overline\qa_i)_{i\in I}, (\qa_i^\vee)_{i\in I})$ over a field $\shk$ endowed  with a discrete valuation $\qo$ (with value group $\QL=\Z$ and ring of integers $\sho=\qo^{-1}([0,+\infty])$) whose residue field $\qk=\F_q$ is finite .
 So, $\M=(a_{i,j})_{i,j\in I}$ is a Kac-Moody matrix, $Y_\shs$ a free $\Z-$module, $(\qa_i^\vee)_{i\in I}$ a family in $Y_\shs$, $(\overline\qa_i)_{i\in I}$ a family in the dual $X=Y_\shs^*$ of $Y_\shs$ and $\overline\qa_j(\qa_i^\vee)=a_{i,j}$.

 \par If $(\overline\qa_i)_{i\in I}$ is free in $X$, we consider $V=V_Y=Y_\shs\otimes_\Z\R$ and the clear quadruple $(V,W^v,(\qa_i=\overline\qa_i)_{i\in I}, (\qa^\vee_i)_{i\in I})$.
  In general, we may define $Q=\Z^I$ with canonical basis $(\qa_i)_{i\in I}$, then $V=V_Q=Hom_\Z(Q,\R)$ is also in a quadruple as in \ref{1.1}.
   A third example $V^ {xl}$ of choice for $V$ is explained in \cite{R13}.
   We always denote by $bar:Q\to X$ the linear map sending $\qa_i$ to $\overline\qa_i$.

   \par With these vectorial data we may define what was considered in \ref{1.1} and \ref{1.2} (we choose $\QL_\qa=\QL=\Z$, $\forall\qa\in\QF$).

   \par Now the hovel $\SHI$ in \ref{1.3} is as defined in \cite{R12} or \cite{R13} and the strongly transitive group is $G=\g G(\shk)$. By \cite[6.11]{R11} or \cite[5.16]{R12} we have $q_M=q$ for any wall $M$.
   
   \par When $\g G$ is a split reductive group, $\SHI$ is its extended Bruhat-Tits building.

   \subsection{Generators for $G$}\label{3.2} The Kac-Moody group $\g G$ contains a split maximal torus $\g T$ with character group $X$ and cocharacter group $Y_\shs$. We note $T=\g T(\shk)$.
   For each $\qa\in\QF\subset Q$ there is a group homomorphism $x_\qa:\shk\to G$ which is one-to-one; its image is the subgroup $U_\qa$.
   Now $G$ is generated by $T$ and the subgroups $U_\qa$ for $\qa\in\QF$, submitted to some relations given by Tits \cite{T87}, also available in \cite{Re02} or \cite{R12}.
   We set $U^\pm{}$ the subgroup generated by the subgroups $U_\qa$ for $\qa\in\QF^\pm{}$.

   \par We shall explain now only a few of the relations. For $u\in\shk$, $t\in T$ and $\qa\in\QF$ one has:

   \par (KMT4) $t.x_\qa(u).t^{-1}=x_\qa(\overline\qa(t).u)$\qquad(where $\overline\qa=bar(\qa)$)

   \par\noindent For $u\not=0$, we note $\widetilde s_\qa(u)=x_\qa(u).x_{-\qa}(u^{-1}).x_\qa(u)$ and $\widetilde s_\qa=\widetilde s_\qa(1)$.

   \par (KMT5) $\widetilde s_\qa(u).t.\widetilde s_\qa(u)^{-1}=r_\qa(t)$\qquad($W^v$ acts on $V$, $Y_\shs$, $X$ hence on $T$)

   \subsection{Weyl groups}\label{3.3} Actually the stabilizer $N$ of $\A\subset\SHI$ is the normalizer of $\g T$ in $G$.
   The image $\qn(N)$ of $N$ in $Aut(\A)$ is a semi-direct product $\qn(N)=\qn(N_0)\ltimes\qn(T)$ with:

   \par $N_0$ is the fixator of $0$ in $N$ and $\qn(N_0)$ is isomorphic to $W^v$ acting linearly on $\A=V$. Actually $\qn(N_0)$ is generated by the elements $\qn(\widetilde s_\qa)$ which act as $r_\qa$ (for $\qa\in\QF$).

   \par $t\in T$ acts on $\A$ by a translation of vector $\qn(t)\in V$ such that $\overline\chi(\qn(t))=-\qo(\chi(t))$ for any $\overline\chi\in X=Y_\shs^*$ and $\chi\in X$ or $Q$ which are related by $\overline\chi=\chi$ if $V=V_Y$ or $\overline\chi=bar(\chi)$ if $V=V_Q$.

   \par So, $\qn(N)=W^v\ltimes Y$ where $Y$ is closely related to $Y_\shs\simeq T/\g T(\sho)$: as $\QL=\qo(\shk)=\Z$, they are equal if $V=V_Y$ and, if $V=V_Q$, $Y=bar^*(Y_\shs)$ is the image of $Y_\shs$ by the map $bar^*:Y_\shs\to Hom_\Z(Q,\Z)$ dual to $bar$.

   \par So, the choice $V=V_Y$ is more pleasant. The choice $V=V_Q$ is made \eg in \cite{Ch10}, \cite{Ch11} or \cite{Re02} and has good properties in the indefinite case, \cf \ref{2.9}.
   They coincide both when $(\overline\qa_i)_{i\in I}$ is a basis of $X\otimes\R=V_Y^*$. This assumption generalizes semi-simplicity, in particular the center of $\g G$ is then finite \cite[9.6.2]{Re02}.

   \subsection{The group $K$}\label{3.4} The group $K=G_0$ should be equal to $\g G(\sho)$ for some integral structure of $\g G$ over $\sho$ \cf \cite[3.14]{GR08}.
    But the appropriate integral structure is difficult to define in general. So we define $K$ by its generators:

    \par The group $N_0$ is generated by $T_0=\g T(\sho)=T\cap K$ and the elements $\widetilde s_\qa$ for $\qa\in\QF$ (this is clear by \ref{3.3}).
    The group $U_0$, generated by the groups $U_{\qa,0}=x_\qa(\sho)$ for $\qa\in\QF$, is in $K$.
    We note $U_0^\pm{}=U_0\cap U^\pm{}$. In general $U_0^\pm{}$ is not generated by the groups $U_{\qa,0}$ for $\qa\in\QF^\pm{}$ \cite[4.12.3a]{R12}.

    \par It is likely that $K$ may be greater than the group generated by $N_0$ and $U_0$ (\ie by $U_0$ and $T_0$).
    We have to define groups $U_0^{pm+}\supset U_0^+$ and $U_0^{nm-}\supset U_0^-$ as follows. In a formal positive completion $\widehat G^+$ of $G$, we can define a subgroup
    $U_0^{ma+}=\prod_{\qa\in\QD^+}\,U_{\qa,0}$ of  the subgroup $U^{ma+}=\prod_{\qa\in\QD^+}\,U_{\qa}$ of  $\widehat G^+$, with $U^+\subset  U^{ma+}$
     (where $U_{\qa,0}$ and $U_\qa$ are suitably defined for $\qa$ imaginary). Then $U_0^{pm+}=U_0^{ma+}\cap G=U_0^{ma+}\cap U^+$.
    The group $U_0^{nm-}$ is defined similarly with $\QD^-$ using a group $U_0^{ma-}\subset U^{ma-}$ in a formal negative completion $\widehat G^-$ of $G$.

    \par Now $K=G_0=U_0^{nm-}.U_0^+.N_0=U_0^{pm+}.U_0^-.N_0$\qquad\cite[4.14, 5.1]{R12}

\begin{rema*}
Let us denote by $K_1$ the group used by A. Braverman, D. Kazhdan and M. Patnaik in their definition of the spherical Hecke algebra. 
With the notation above, $K_1$ is generated by $T_0$ and $U_0$, \ie by $T_0$, $U_0^+$ and $U_0^-$, hence $K=U_0^{nm-}.K_1=U_0^{pm+}.K_1$, with $U_0^-\subset U_0^{nm-}\subset U^-$ and $U_0^+\subset U_0^{pm+}\subset U^+$.
  But they prove, at least in the untwisted affine case, that $U^-\cap U^+.K_1\subset K_1$ \cite[proof of 6.4.3]{BrKP12}; so $U_0^{nm-}\subset U^-\cap K\subset U^-\cap U^+.K_1\subset K_1$ and $K=K_1$.
  This result answers positively a question in \cite[5.4]{R13}, at least for points of type $0$ and in the untwisted affine split case.
\end{rema*}

    \begin{prop}\label{3.5} There is an involution $\theta$ (called Chevalley involution) of the group $G$ such that $\theta(t)=t^{-1}$ for all  $ t\in T$ and $\theta(x_\qa(u))=x_{-\qa}(u)$ for all $\qa\in\QF$ and $u\in\shk$.
     Moreover $K$ is $\theta-$stable and $\theta$ induces the identity on $W^v=N/T$.
    \end{prop}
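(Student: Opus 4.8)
The plan is to construct $\theta$ directly on generators and verify it is a well-defined automorphism using Tits' presentation of the split Kac-Moody group $\g G$. Recall from \ref{3.2} that $G=\g G(\shk)$ is generated by $T$ and the root subgroups $U_\qa$ ($\qa\in\QF$), subject to the relations listed by Tits \cite{T87} (see also \cite{Re02}, \cite{R12}). First I would define a candidate map $\theta$ on these generators by $\theta(t)=t^{-1}$ for $t\in T$ and $\theta(x_\qa(u))=x_{-\qa}(u)$ for $\qa\in\QF$, $u\in\shk$. To see that $\theta$ extends to a group endomorphism, one must check that it preserves each defining relation; since applying $\theta$ simply swaps $\qa\leftrightarrow-\qa$ throughout and inverts torus elements, every relation is sent to the ``opposite'' relation, which is again one of the defining relations (the relation system is symmetric under $\QF\to-\QF$ together with inversion on $T$). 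For instance, relation (KMT4), $t.x_\qa(u).t^{-1}=x_\qa(\overline\qa(t).u)$, is sent to $t^{-1}.x_{-\qa}(u).t=x_{-\qa}(\overline\qa(t).u)$; substituting $t\mapsto t^{-1}$ and using $\overline{(-\qa)}(t^{-1})=\overline\qa(t)$ recovers (KMT4) for $-\qa$. The commutation relations among the $U_\qa$ (of the shape $[x_\qa(u),x_\qb(v)]=\prod x_{p\qa+q\qb}(\ldots)$) behave likewise, since $\{p\qa+q\qb\}\mapsto\{-(p\qa+q\qb)\}$ and the structure constants depend only on the root data, which are invariant. It follows that $\theta$ is an endomorphism; since $\theta^2$ fixes all generators, $\theta^2=\mathrm{id}$, so $\theta$ is an involutive automorphism.

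Next I would verify the remaining assertions. That $\theta$ induces the identity on $W^v=N/T$: from $\widetilde s_\qa(u)=x_\qa(u)x_{-\qa}(u^{-1})x_\qa(u)$ one computes directly $\theta(\widetilde s_\qa(u))=x_{-\qa}(u)x_{\qa}(u^{-1})x_{-\qa}(u)=\widetilde s_{-\qa}(u)$, and since $\widetilde s_{-\qa}(u)$ and $\widetilde s_\qa(u)$ represent the same reflection $r_\qa=r_{-\qa}$ in $W^v$ (they differ by an element of $T$), $\theta$ acts trivially on $N/T$. Also $\theta(N)=N$ because $N$ is generated by $T$ (which is $\theta$-stable) and the $\widetilde s_\qa$, whose images $\widetilde s_{-\qa}$ lie in $N$.

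For $\theta$-stability of $K$, I would use the explicit description $K=G_0=U_0^{nm-}.U_0^+.N_0=U_0^{pm+}.U_0^-.N_0$ from \ref{3.4}. Since $\theta$ sends $x_\qa(\sho)$ to $x_{-\qa}(\sho)$, it maps the generators $U_{\qa,0}$ of $U_0$ to those of $U_0$ with $\qa$ replaced by $-\qa$, hence $\theta(U_0^+)=U_0^-$ and $\theta(U_0^-)=U_0^+$; similarly $\theta(T_0)=T_0$ and $\theta(\widetilde s_\qa)=\widetilde s_{-\qa}$, so $\theta(N_0)=N_0$. The only delicate point is the interchange $\theta(U_0^{pm+})=U_0^{nm-}$: this requires that $\theta$ extend compatibly to the formal completions $\widehat G^+$ and $\widehat G^-$, swapping $U^{ma+}=\prod_{\qa\in\QD^+}U_\qa$ with $U^{ma-}=\prod_{\qa\in\QD^-}U_\qa$ and their $\sho$-subgroups, where one must know that the Chevalley involution is defined also on imaginary root groups $U_\qa$ ($\qa\in\QD^\pm_{im}$) and exchanges $U_{\qa,0}\leftrightarrow U_{-\qa,0}$. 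This is the main obstacle: it is a compatibility statement between the Chevalley involution and the construction of the maximal (Kac-Moody) completions and their integral structures, which I would extract from the references \cite{R12}, \cite{R13} (the completions there are built functorially from the root data, so the $\QF\to-\QF$ symmetry lifts). Granting this, $\theta(K)=\theta(U_0^{pm+})\theta(U_0^-)\theta(N_0)=U_0^{nm-}.U_0^+.N_0=K$, and the proof is complete.
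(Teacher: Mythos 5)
Your plan --- define $\theta$ on generators, check the Tits relations, then get $K$-stability from the decomposition $K=U_0^{pm+}.U_0^-.N_0$ and from extending $\theta$ to the formal completions --- is essentially the paper's plan, and the computation $\theta(\widetilde s_\qa(u))=\widetilde s_{-\qa}(u)$ for the $W^v$-assertion is the right one. But there is a real gap at the relation-checking step. You assert that ``the relation system is symmetric under $\QF\to-\QF$ together with inversion on $T$'' and that the constants in the commutation relations (KMT3) ``depend only on the root data, which are invariant,'' but that is exactly what must be proved, not a given. The structure constants $C^{\qa,\qb}_{p,q}$ in (KMT3), and the signs $\qe=\pm1$ appearing in (KMT7) ($\widetilde s_{\qa}.x_\qb(u).\widetilde s_{\qa}^{-1}=x_\qg(\qe u)$, with $\qe$ determined by $\widetilde s_\qa(e_\qb)=\qe e_\qg$), depend on the choice of Chevalley generators $e_\qa$; a priori, the identities $C^{-\qa,-\qb}_{p,q}=C^{\qa,\qb}_{p,q}$ and ``$\widetilde s_\qa(e_\qb)=\qe e_\qg\Rightarrow\widetilde s_\qa(e_{-\qb})=\qe e_{-\qg}$'' could fail under a different normalization.

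The missing idea, and the key point of the paper's proof, is to derive these symmetries from the fact that the Chevalley involution is already known to exist on the complex Kac-Moody algebra $\g g_\M$ \cite[1.3.4]{K90}, from which the structure constants and the signs $\qe$ are extracted; its existence there is what forces the $\qa\leftrightarrow-\qa$ symmetry of the defining relations of $\g G(\shk)$ in arbitrary characteristic. You should also go through (KMT5), (KMT6), (KMT7) explicitly --- (KMT6) requires $(-\qa)^\vee(u)=\qa^\vee(u^{-1})$, and one uses $\widetilde s_\qa(u)=\widetilde s_{-\qa}(u^{-1})$ from \cite[1.5, 1.7.5]{R12} --- rather than folding them into a blanket ``opposite relation'' claim. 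Your handling of the completion step $\theta(U_0^{pm+})=U_0^{nm-}$ is acceptable (the paper is equally brief there); the relation check is where the substance lies and where your argument is currently circular.
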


    \begin{proof} This involution is well known on the corresponding complex Lie algebra, see \cite[1.3.4]{K90} where one uses for the generators $e_\qa$ a convention different from ours ($[e_\qa,e_{-\qa}]=-\qa^\vee$ as in \cite{T87} or \cite{Re02}).
    Hence the proposition follows when $\qk$ contains $\C$ or is at least of characteristic $0$.
    But here we have to use the definition of $G$ by generators and relations.

    \par We see in \cite[1.5, 1.7.5]{R12} that $\widetilde s_{\qa}(-u)=\widetilde s_{\qa}(u)^{-1}$ and $\widetilde s_{\qa}(u)=\widetilde s_{-\qa}(u^{-1})$.
     So for the wanted involution $\theta$ we have $\theta(\widetilde s_{\qa}(u))=\widetilde s_{-\qa}(u)=\widetilde s_{\qa}(u^{-1})$. We have now to verify the relations between the $\theta(x_\qa(u))=x_{-\qa}(u)$, $\theta(t)=t^{-1}$ and $\theta(\widetilde s_{\qa}(u))=\widetilde s_{\qa}(u^{-1})$.
     This is clear for (KMT4) and (KMT5) (as $r_\qa=r_{-\qa}$). The three other relations are:

  \par (KMT3) $(x_\qa(u),x_\qb(v))=\prod x_\qg(C^{\qa,\qb}_{p,q}.u^pv^q)$ for $(\qa,\qb)\in\QF^2$ prenilpotent and, for the product, $\qg=p\qa+q\qb$ runs in $(\Z_{>0}\qa+\Z_{>0}\qb)\cap\QF$.
  But the integers $C^{\qa,\qb}_{p,q}$ are picked up from the corresponding formula between exponentials in the automorphism group of the corresponding complex Lie algebra.
  As we know that $\theta$ is defined in this Lie algebra, we have $C^{-\qa,-\qb}_{p,q}=C^{\qa,\qb}_{p,q}$ and (KMT3) is still true for the images by $\theta$.

  \par (KMT6) $\widetilde s_{\qa}(u^{-1})=\widetilde s_{\qa}.\qa^\vee(u)$ for $\qa$ simple and $u\in\shk\setminus\{0\}$.
  \par\noindent This is still true after a change by $\theta$ as $\theta(\widetilde s_{\qa}(u^{-1}))=\widetilde s_{\qa}(u)$ and $(-\qa)^\vee(u)=\qa^\vee(u^{-1})$.

  \par (KMT7) $\widetilde s_{\qa}.x_\qb(u).\widetilde s_{\qa}^{-1}=x_\qg(\qe.u)$ if $\qg=r_\qa(\qb)$ and $\widetilde s_{\qa}(e_\qb)=\qe.e_\qg$ in the Lie algebra (with $\qe=\pm{}1$).
   This is still true after a change by $\theta$ because $\widetilde s_{\qa}(e_\qb)=\qe.e_\qg\Rightarrow\widetilde s_{\qa}(e_{-\qb})=\qe.e_{-\qg}$ (as $r_\qa(\qb^\vee)=\qg^\vee$).

   \par So, $\theta$ is a well defined involution of $G$,  $\theta(U_0)=U_0$, $\qth(N_0)=N_0$ and $\qth(U_0^\pm{})=U_0^\mp$.
   But the isomorphism $\qth$ of $U^+$ onto $U^-$ can clearly be extended to an isomorphism $\qth$ from $U^{ma+}$ onto $U^{ma-}$ sending $U_0^{ma+}$ onto $U_0^{ma-}$.
   So $\qth(U_0^{pm+})=U_0^{nm-}$ and $\qth(K)=K$.
   As $\qth(\widetilde s_{\qa})=\widetilde s_{\qa}$, $\qth$ induces the identity on $W^v=N/T$.
    \end{proof}

\begin{theo}\label{3.6} The algebra $\widehat\shh_R$ or $\shh_R$ is commutative, when it exists.
\end{theo}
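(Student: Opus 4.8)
The plan is to run the classical Gelfand trick, using the Chevalley involution $\theta$ of Proposition~\ref{3.5} to produce an involutive \emph{anti}-automorphism of $G$ that stabilizes $K$ and fixes every $K$-double coset contained in $G^+$. Set $\iota\colon G\to G$, $\iota(g)=\theta(g^{-1})=\theta(g)^{-1}$. Then $\iota$ is an anti-automorphism (it is the composite of the anti-automorphism $g\mapsto g^{-1}$ with the automorphism $\theta$), it is involutive since $\theta^2=\mathrm{id}$, and $\iota(K)=\theta(K)=K$. Most importantly, as $\theta(t)=t^{-1}$ for $t\in T$, we get $\iota(t)=\theta(t^{-1})=t$, so $\iota$ restricts to the identity on $T$.

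The next step is to check that $\iota$ preserves $G^+$ and fixes each double coset of $K\backslash G^+/K$ setwise. By \S\ref{1.4}--\S\ref{1.5} this set is in bijection with $Y^{++}$, and each class has a representative in $T$: for $\ql\in Y^{++}$ choose $t_\ql\in T$ with $\qn(t_\ql)$ the translation by $\ql$ (possible since $\qn(T)=Y$), so that $t_\ql.0=\ql$ and $d^v(0,t_\ql.0)=\ql^{++}=\ql$; thus $Kt_\ql K$ is the coset attached to $\ql$. Since $\iota$ fixes $t_\ql$ and $K$, we get $\iota(Kt_\ql K)=Kt_\ql K$, hence $\iota(G^+)=G^+$ and $\iota$ fixes every double coset in $K\backslash G^+/K$. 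Consequently, for any $K$-bi-invariant function $\qf$ supported on $G^+$, the function $\qf^\iota:=\qf\circ\iota$ is again such a function and $\qf^\iota=\qf$, because $\iota(g)\in KgK$ for all $g$.

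Finally, using the convolution $(\qf*\psi)(g)=\sum_{hK\in G/K}\qf(h)\,\psi(h^{-1}g)$ (functions extended by $0$ off $G^+$, only finitely many terms being nonzero at each $g$ once the product is defined), I would substitute $h=\iota(h')$ and use $\iota(h')^{-1}=\iota(h'^{-1})$, $h^{-1}\iota(g)=\iota(gh'^{-1})$, and the fact that $\iota$ exchanges $G/K$ with $K\backslash G$, to rewrite $(\qf*\psi)\circ\iota$ as $(\psi\circ\iota)*(\qf\circ\iota)$. Combined with $\qf^\iota=\qf$, $\psi^\iota=\psi$ and $(\qf*\psi)^\iota=\qf*\psi$, this yields $\qf*\psi=\psi*\qf$; the argument is the same for $\widehat\shh_R$ and for $\shh_R$ whenever these are algebras, and on the basis $(c_\ql)$ it is exactly the symmetry $m_{\ql,\qm}(\qn)=m_{\qm,\ql}(\qn)$ of the structure constants. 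The only point I expect to require care is the bookkeeping of left versus right $K$-cosets under the anti-automorphism $\iota$ in this last computation, together with the legitimacy of reindexing the a priori infinite sums — which is immediate on the characteristic functions $c_\ql$, each value there being a finite sum by the standing hypothesis (or by Proposition~\ref{2.4}), and then extends by $R$-bilinearity.
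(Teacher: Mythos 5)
Your proposal is correct and follows essentially the same route as the paper: the anti-involution $\iota(g)=\theta(g^{-1})$ you construct is exactly the paper's $\theta^\#$, and the sequence of observations (it stabilizes $K$, fixes $T$, hence fixes every $K$-double coset in $G^+$, then a change of variables in the convolution sum) is precisely the argument in the paper's proof of Theorem~\ref{3.6}. The only cosmetic difference is that the paper compresses the reindexing into the single map $h\mapsto g\,\theta^\#(h^{-1})$ on $G^+/K$, whereas you split it into the substitution $h=\iota(h')$ (sending $G/K$ to $K\backslash G$) followed by a translation; both are the same computation.
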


    \par\noindent{\bf Notation}: To be clearer we shall sometimes write $\widehat\shh_R(\g G,\shk)$ or $\shh_R(\g G,\shk)$ instead of $\widehat\shh_R$ or $\shh_R$.

\begin{proof} The formula $\qth^\#(g)=\qth(g^{-1})$ defines an anti-involution ($\qth^\#(gh)=\qth^\#(h).\qth^\#(g)$ ) of $G$ which induces the identity on $T$ and stabilizes $K$.
 In particular $\qth^\#(G^+)=\qth^\#(KY^{++}K)=G^+$ and $\qth^\#(K\ql K)=K\ql K$, $\forall\ql\in Y^{++}$.
  For $\qf,\psi\in\widehat\shh_R$ and $g\in G^+$, one has:\quad $(\qf*\psi)(g)=$\goodbreak\noindent$(\qf*\psi)(\qth^\#(g))=\sum_{h\in G^+/K}\:\qf(h)\psi(h^{-1}\qth^\#(g))$.
  The map $h\mapsto h'=\qth^\#(h^{-1}\qth^\#(g))=g\qth^\#(h^{-1})$ is one-to-one from $G^+/K$ onto $G^+/K$. So, $(\qf*\psi)(g)=\sum_{h'\in G^+/K}\:\qf(\qth^\#(h'^{-1}g))\psi(\qth^\#(h'))=\sum_{h'\in G^+/K}\:\qf(h'^{-1}g)\psi(h')=(\psi*\qf)(g)$.
\end{proof}

\begin{remas}\label{3.7} 1) This commutativity will be below proved in general as a consequence of the Satake isomorphism.
 The above proof generalizes well known proofs in the reductive case, \eg for $\g G=\g{GL}_n$,  $\qth^\#$ is the transposition.

2) When $\g G$ is an almost split Kac-Moody group over the field $\shk$ (supposed complete or henselian) it splits over a finite Galois extension $\shl$, the hovel $^\shk\!\SHI$ over $\shk$ exists and embeds in the hovel $^\shl\!\SHI$ over $\shl$ \cite[{\S{}} 6]{R13}.
 After enlarging eventually $\shl$ one may suppose that $0$ is a special point in $^\shk\!\SHI$ and $^\shl\!\SHI$, more precisely in the fundamental apartments $^\shk\!\A\subset {^\shl\!\A}=\A$ associated respectively to a maximal $\shk-$split torus $_\shk\g S$ and a $\shl-$split maximal torus $\g T\supset{_\shk\g S}$.
  If we make a good choice of the homomorphisms $x_\qa:\shl\to\g G(\shl)$, the associated involution $\theta$ of $\g G(\shl)$ should commute with the action of the Galois group $\QG=Gal(\shl/\shk)$ hence induce an involution $^\shk\theta$ and an anti-involution $^\shk\theta^\#$ of $\g G(\shk)=\g G(\shl)^\QG$ such that $^\shk\theta(K)={^\shk\theta^\#}(K)=K$ and $^\shk\theta^\#$ induces the identity in $Y({_\shk\g S})={_\shk\g S}(K)/{_\shk\g S}(\sho)$.
  The commutativity of $\widehat\shh_R(\g G,\shk)$ or $\shh_R(\g G,\shk)$ would follow.
  
  \par This strategy works well when $\g G$ is quasi split over $\shk$; unfortunately it seems to fail in the general case.
 

  \par 3) The commutativity of $\widehat\shh_R$ or $\shh_R$ is linked to the choice of a special vertex for the origin $0$.
  Even in the semi-simple case, other choices may give non commutative convolution algebras, see \cite{Sa63} and \cite{KeR07}.

\end{remas}
\section{Structure constants}\label{s4}

We come back to the general framework of {\S{}} \ref{s1}. We shall compute the structure constants of $\widehat\shh_R$ or $\shh_R$ by formulas depending on $\A$ and the numbers $q_M$ of \ref{1.3}.
 Note that there are only a finite number of them: as $q_{wM}=q_M$, $\forall w\in\qn(N)$ and $wM(\qa,k)=M(w\qa,k),\forall w\in W^v$, we may suppose $M=M(\qa_i,k)$ with $i\in I$ and $k\in \Z$.
  Now  $\qa_i^\vee\in Q^\vee\subset Y$; as $\qa_i(\qa_i^\vee)=2$ the translation by $\qa_i^\vee$ permutes the walls $M=M(\qa_i,k)$ (for  $k\in \Z$) with two orbits.
  So $Y$ has at most two orbits in the set of the constants $q_{M(\qa_i,k)}$, those of $q_i=q_{M(\qa_i,0)}$ and $q_i'=q_{M(\qa_i,\pm{}1)}$.
  Hence the number of (possibly) different parameters is at most $2.\vert I\vert$. We denote by $\shq=\{q_1,\cdots,q_l,q'_1=q_{l+1},\cdots,q'_l=q_{2l}\}$ this set of parameters.

\subsection{Centrifugally folded galleries of chambers}
\label{4.1}

Let $x$ be a point in the standard apartment $\mathbb A$. Let $\Phi_x$ be the set of all roots $\alpha$ such that $\alpha (x)\in \Z$. It is a closed subsystem of roots. Its associated Weyl group $W^v_x$ is a Coxeter group.

\par We have twinned buildings $\SHI^+_x$ (resp. $\SHI^-_x$) whose elements are segment germs $[x,y)=germ_x([x,y])$ for $y\in\SHI$, $y\not=x$, $y\geq{}x$ (resp. $y\leq{}x$).
 We consider their unrestricted structure, so the associated Weyl group is $W^v$ and the chambers (resp. closed chambers) are the local chambers $C=germ_x(x+C^v)$ (resp. local closed chambers $\overline{C}=germ_x(x+\overline{C^v})$), where $C^v$ is a vectorial chamber, \cf \cite[4.5]{GR08} or \cite[{\S{}} 5]{R11}.
  To $\A$ is associated a twin system of apartments $\A_x = (\A_x^-,\A_x^+)$.

  \par  We choose in $\A^-_x$ a negative (local) chamber $C^-_x$ and denote $C^+_x$ its opposite in $\A^+_x$.
  We consider the system of positive roots $\QF^+$ associated to $C^+_x$ (\ie $\QF^+=w\QF^+_f$, if $\QF^+_f$ is the system $\QF^+$ defined in \ref{1.1} and $C^+_x=germ_x(x+wC^v_f)$).
   We note $(\qa_i)_{i\in I}$ the corresponding basis of $\QF$ and $(r_i)_{i\in I}$ the corresponding generators of $W^v$.

\par Fix a reduced decomposition of  an element $w\in W^v$, $w = r_{i_1}\cdots r_{i_r}$ and let
$\mathbf i = (i_1,..., i_r)$ be the type of the decomposition.
 We consider now galleries of (local) chambers $\mathbf c = (C_x^-,C_1,...,C_r)$ in the
apartment $\mathbb A_x^-$ starting at $C_x^-$ and of type $\mathbf i$. The set of all these galleries is in bijection with the set $\Gamma (\mathbf i) = \{1,r_{i_1}\}\times\cdots\times \{1,r_{i_r}\}$ via the map $(c_1,...,c_r)\mapsto (C_x^-, c_1 C_x^-,...,c_1\cdots c_r C_x^-)$.
Let $\beta_j = -c_1\cdots c_j (\alpha_{i_j})$, then $\beta_j$ is the root corresponding to the common
limit hyperplane $M_j = M_{\beta_j}$ of $C_{j-1} =c_1\cdots c_{j-1} C_x^- $ and $C_j = c_1\cdots c_j C_x^-$ and satisfying to $\qb_j(C_j)\geq{}\qb_j(x)$ (actually $M_j$ is a wall $\iff\qb_j\in\QF_x$).
In the following, we shall identify a sequence $(c_1,...,c_r)$ and the corresponding gallery.

\begin{defi}\label{4.2}
Let $\mathfrak Q$ be a chamber in $\mathbb A_x^+$. A gallery $\mathbf c = (c_1,...,c_r)\in\Gamma(\mathbf i)$ is said to be centrifugally folded with
respect to $\mathfrak Q$ if $c_j = 1$ implies $\qb_j\in\QF_x$ and $w_{\mathfrak Q}^{-1} \beta_j < 0$, where $w_{\mathfrak Q} = w(C_x^+, \mathfrak Q)\in W^v$ (\ie $\g Q=w_{\g Q}C^+_x$).
We denote this set of centrifugally folded galleries by $\Gamma_{\mathfrak Q}^+(\mathbf i)$.
\end{defi}

\begin{prop}\label{4.3}
A gallery $\mathbf c = (C_x^-,C_1,...,C_r)\in\Gamma(\mathbf i)$ belongs to $\Gamma_{\mathfrak Q}^+(\mathbf i)$ if, and only if, $C_j = C_{j-1}$ implies that  $M_j = M_{\beta_j}$ is a wall and separates $\mathfrak Q$ from $C_j = C_{j-1}$.
\end{prop}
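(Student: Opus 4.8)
The statement to prove is Proposition \ref{4.3}: a gallery $\mathbf c = (C_x^-,C_1,\dots,C_r)\in\Gamma(\mathbf i)$ lies in $\Gamma_{\mathfrak Q}^+(\mathbf i)$ if and only if, whenever $C_j = C_{j-1}$, the hyperplane $M_j = M_{\beta_j}$ is a wall and separates $\mathfrak Q$ from $C_j = C_{j-1}$. This is essentially an unwinding of Definition \ref{4.2} combined with the sign conventions introduced in \S\ref{4.1}. The plan is to translate both sides into statements about the roots $\beta_j$ and the Weyl element $w_{\mathfrak Q}$, and check they match condition by condition.

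\textbf{Step 1: Translate "$C_j = C_{j-1}$".} In the bijection $\Gamma(\mathbf i)\cong\{1,r_{i_1}\}\times\cdots\times\{1,r_{i_r}\}$, the $j$-th chamber is $C_j = c_1\cdots c_j\, C_x^-$ while $C_{j-1} = c_1\cdots c_{j-1}\,C_x^-$. Hence $C_j = C_{j-1}$ is precisely $c_j = 1$ (since $c_j = r_{i_j}\ne 1$ genuinely moves the chamber across $M_j$). So the hypothesis "$C_j = C_{j-1}$" of the proposition is identical to the hypothesis "$c_j = 1$" of Definition \ref{4.2}. First I would record this equivalence explicitly.

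\textbf{Step 2: Translate "$M_j$ is a wall".} By the discussion right after the definition of $\beta_j$ in \S\ref{4.1}, $M_j = M_{\beta_j}$ is a wall if and only if $\beta_j\in\Phi_x$, i.e. $\beta_j(x)\in\mathbb Z$. This is the first clause of the folding condition in Definition \ref{4.2}. So I would note that "$M_j$ is a wall" $\iff$ "$\beta_j\in\Phi_x$", matching the two definitions on that point.

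\textbf{Step 3: Translate "$M_j$ separates $\mathfrak Q$ from $C_j$".} Here is the only substantive point. By construction $\beta_j$ is normalized so that $\beta_j(C_j)\ge\beta_j(x)$; equivalently $C_j\subset\overline{D(\beta_j, -\beta_j(x))}$ lies on the nonnegative side of $M_j = M_{\beta_j}$. On the other hand, $w_{\mathfrak Q} = w(C_x^+,\mathfrak Q)$ and the positive system $\Phi^+$ was chosen to be the one associated to $C_x^+$, so for a root $\gamma$ one has $\gamma > 0$ iff $\gamma(C_x^+) > 0$, and the chamber $\mathfrak Q = w_{\mathfrak Q} C_x^+$ lies on the positive side of $M_\gamma$ iff $w_{\mathfrak Q}^{-1}\gamma > 0$. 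Applying this with $\gamma = \beta_j$: the condition $w_{\mathfrak Q}^{-1}\beta_j < 0$ from Definition \ref{4.2} says exactly that $\mathfrak Q$ lies on the \emph{negative} side of $M_{\beta_j}$, while $C_j$ lies on the nonnegative (hence, since $C_j$ is a chamber not contained in the wall, strictly positive) side; that is, $M_j$ separates $\mathfrak Q$ from $C_j$. Conversely separation together with $C_j$ on the positive side forces $\mathfrak Q$ on the negative side, i.e. $w_{\mathfrak Q}^{-1}\beta_j < 0$. I would write this out carefully, being explicit that $C_j$ being a chamber (full-dimensional local chamber at $x$) is not contained in $M_j$, so "nonnegative side" upgrades to "positive side", which is what makes "separates" a genuine equivalence.

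\textbf{Combining and the main obstacle.} Putting Steps 1–3 together: the pair of conditions "$c_j=1\Rightarrow\beta_j\in\Phi_x$ and $w_{\mathfrak Q}^{-1}\beta_j<0$" is literally the pair "$C_j=C_{j-1}\Rightarrow M_j$ is a wall and $M_j$ separates $\mathfrak Q$ from $C_j$", which finishes the proof. I expect no real difficulty here; the only place requiring care is Step 3, matching the algebraic sign condition $w_{\mathfrak Q}^{-1}\beta_j<0$ with the geometric notion of separation, keeping track of the normalization $\beta_j(C_j)\ge\beta_j(x)$ and of the fact that $\Phi^+$ is the system attached to $C_x^+$ rather than to $\mathfrak Q$ — a sign slip there would reverse the statement. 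It is also worth checking at the outset that when $c_j = 1$ one indeed has $M_{j}$ being the common limit hyperplane of $C_{j-1}$ and $C_j$ in the degenerate sense that $C_{j-1}=C_j$ both abut $M_j$ from the same side, so that "separates $\mathfrak Q$ from $C_j = C_{j-1}$" is unambiguous.
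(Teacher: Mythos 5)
Your proof is correct and takes essentially the same route as the paper: translate "$M_j$ is a wall" into "$\beta_j\in\Phi_x$", and translate "$M_j$ separates $\mathfrak Q$ from $C_j$" into "$w_{\mathfrak Q}^{-1}\beta_j<0$" using the normalization $\beta_j(C_j)\geq\beta_j(x)$ and the fact that $\Phi^+$ is the system attached to $C_x^+$. The paper phrases the sign bookkeeping by first applying $w_{\mathfrak Q}^{-1}$ to move $\mathfrak Q$ to $C_x^+$ and then reading off the sign, whereas you reason with $\beta_j(\mathfrak Q)$ directly; these are the same computation.
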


\noindent{\it Proof. } We saw that $M_j$ is a wall $\iff\qb_j\in\QF_x$. We have the following equivalences:

\noindent ($M_j$ separates $\mathfrak Q$ from $C_j = C_{j-1}$) $\Longleftrightarrow$ ($w_{\mathfrak Q}^{-1}M_j$
separates $C_x^+$ from $w_{\mathfrak Q}^{-1}C_j = w_{\mathfrak Q}^{-1}C_{j-1}$)  $\Longleftrightarrow$ ($w_{\mathfrak Q}^{-1} \beta_j $ is a negative root).
\qed

\bigskip
\par The group $\overline G_x = G_x/G_{\SHI_x}$ acts strongly transitively on $\SHI_x^+$ and $\SHI_x^-$.
For any root $\alpha\in \Phi_x$ with $\alpha(x) = k\in\Z$, the group $\overline U_\alpha = U_{\alpha, k}/U_{\alpha, k +1}$ is a finite subgroup of $\overline G_x$ of cardinality $q_{x,\qa}= q_{M(\alpha, -\alpha(x))}\in\shq$. We denote by $u_\alpha$ the elements of this group.

\medskip

Next, let $\qr_{\mathfrak Q} : \SHI_x \to \mathbb A_x$ be the retraction centered at $\mathfrak Q$. To a gallery of chambers $\mathbf c = (c_1,...,c_r) = (C_x^-,C_1,...,C_r)$ in $\Gamma(\mathbf i)$,
one can associate the set of all galleries of type $\mathbf i$ starting at $C_x^-$ in $\SHI_x^-$ that retract onto $\mathbf c$, we denote this set by $\mathcal C_{\mathfrak Q}(\mathbf c)$.
We denote the set of minimal galleries in $\mathcal C_{\mathfrak Q}(\mathbf c)$ by $\mathcal C^m_{\mathfrak Q}(\mathbf c)$.
Set
\begin{equation}
\label{eq:Gal}
g_j =
\left\{
\begin{array}{ll}
c_j & \hbox{ if } w_{\mathfrak Q}^{-1} \beta_j > 0  \hbox{ or } \qb_j\not\in\QF_x \\
u_{c_j(\alpha_{i_j})} c_j & \hbox{ if } w_{\mathfrak Q}^{-1} \beta_j < 0 \hbox{ and } \qb_j\in\QF_x.
\end{array}\right.
\end{equation}


\begin{prop}\label{4.4} $\mathcal C_{\mathfrak Q}(\mathbf c)$ is the non empty set of all galleries $(C_x^-=C'_0,C'_1,...,C'_r)$ where $\forall j: \ C'_j = g_1\cdots g_j C_x^- $ with each $g_j$ chosen as in (\ref{eq:Gal}) above.
For all $j$ the local chambers $\mathfrak Q$ and $C'_j$ are in the apartment $g_1\cdots g_j\A_x$.

\par The set $\mathcal C^m_{\mathfrak Q}(\mathbf c)$ is empty if, and only if, the gallery $\mathbf c$ is not centrifugally folded with respect to $\mathfrak Q$.
 The gallery $(C_x^-=C'_0,C'_1,...,C'_r)$ is minimal if, and only if,  $c_j\not=1$ for any $j$ with $w_{\mathfrak Q}^{-1} \beta_j > 0$ or $\qb_j\not\in\QF_x$ and $u_{c_j(\alpha_{i_j})}\ne 1$ for any $j$ with $c_j=1$ and $w_{\mathfrak Q}^{-1} \beta_j < 0$.
\end{prop}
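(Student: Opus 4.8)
The plan is to proceed by induction on the length $r$ of $\mathbf c$. The case $r=0$ is trivial: $\mathcal C_{\mathfrak Q}(\mathbf c)=\{(C_x^-)\}$, this gallery is minimal, and $\mathbf c$ is vacuously centrifugally folded. For the inductive step, set $\mathbf c'=(C_x^-,C_1,\dots,C_{r-1})$, of type $\mathbf i'=(i_1,\dots,i_{r-1})$. If $(C_x^-=C'_0,C'_1,\dots,C'_r)$ is a gallery of type $\mathbf i$ starting at $C_x^-$ in $\SHI_x^-$ with $\rho_{\mathfrak Q}$-image $\mathbf c$, then its initial segment $(C'_0,\dots,C'_{r-1})$ has $\rho_{\mathfrak Q}$-image $\mathbf c'$, so the induction hypothesis gives $C'_j=g_1\cdots g_j C_x^-$ for $j\le r-1$ with each $g_j$ chosen as in $(\ref{eq:Gal})$, and moreover $\mathfrak Q$ and $C'_{r-1}$ both lie in the apartment $A:=g_1\cdots g_{r-1}\mathbb A_x$. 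Everything then reduces to computing the fibre over $C_r$ of the map $C'_r\mapsto\rho_{\mathfrak Q}(C'_r)$, where $C'_r$ runs over the chambers on the panel $\pi'$ of $C'_{r-1}$ of type $i_r$ (with the convention $C'_r=C'_{r-1}$ allowed).

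For this local computation I would use two standard facts about the retraction $\rho_{\mathfrak Q}\colon\SHI_x\to\mathbb A_x$: (i) its restriction to any apartment containing $\mathfrak Q$ is an isomorphism onto $\mathbb A_x$ fixing $\mathfrak Q$, so applied to $A$ it yields $\rho_{\mathfrak Q}(C'_{r-1})=C_{r-1}$ and $\rho_{\mathfrak Q}(E'_0)=$ the chamber of $\mathbb A_x$ on $\rho_{\mathfrak Q}(\pi')$ distinct from $C_{r-1}$, where $E'_0$ denotes the chamber of $A$ on $\pi'$ other than $C'_{r-1}$; and (ii) by the gallery-distance preservation property of retractions in twinned buildings, $\rho_{\mathfrak Q}$ sends the combinatorial projection $\mathrm{proj}_{\pi'}(\mathfrak Q)$ to one chamber of $\rho_{\mathfrak Q}(\pi')$ and every other chamber of $\pi'$ to the other one. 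When $\beta_r\notin\Phi_x$ the panel $\pi'$ is thin, so $\rho_{\mathfrak Q}$ is a bijection on it and the fibre over $C_r$ is the single chamber $c_r C'_{r-1}$, i.e.\ $g_r=c_r$. When $\beta_r\in\Phi_x$, Proposition \ref{4.3} identifies, inside $A$, the side of the wall $M_r$ containing $\mathfrak Q$ with the sign of $w_{\mathfrak Q}^{-1}\beta_r$, hence determines whether $\mathrm{proj}_{\pi'}(\mathfrak Q)$ equals $C'_{r-1}$ or $E'_0$; feeding this into (i) and (ii) and using that the wall group $\overline U_{c_r(\alpha_{i_r})}=U_{c_r(\alpha_{i_r}),k}/U_{c_r(\alpha_{i_r}),k+1}$, of cardinality $q_{M_r}\in\shq$, acts simply transitively on the chambers of $\pi'$ off the projection, one recovers exactly the description $(\ref{eq:Gal})$ of $g_r$ together with the fibre. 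Non-emptiness of $\mathcal C_{\mathfrak Q}(\mathbf c)$ is then immediate, since $\mathbf c$ itself lies in $\mathbb A_x^-$ and is fixed by $\rho_{\mathfrak Q}$ (equivalently, choosing all $u_{c_j(\alpha_{i_j})}$ equal to $1$ is always permitted).

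Finally, the minimality assertions are read off as follows. A gallery $(C'_0,\dots,C'_r)$ with $C'_j=g_1\cdots g_j C_x^-$ is minimal exactly when it does not stammer, i.e.\ $C'_j\ne C'_{j-1}$ for all $j$; and from $(\ref{eq:Gal})$ one checks that $C'_j=C'_{j-1}$ occurs precisely when either $c_j=1$ while the first case of $(\ref{eq:Gal})$ holds (so $g_j=c_j=1$), or $c_j=1$, $w_{\mathfrak Q}^{-1}\beta_j<0$, $\beta_j\in\Phi_x$ and the chosen $u_{c_j(\alpha_{i_j})}$ equals $1$. Negating this over all $j$ gives verbatim the stated characterisation of $\mathcal C^m_{\mathfrak Q}(\mathbf c)$. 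Moreover a minimal gallery in $\mathcal C_{\mathfrak Q}(\mathbf c)$ exists if and only if for every $j$ with $c_j=1$ one has $w_{\mathfrak Q}^{-1}\beta_j<0$ and $\beta_j\in\Phi_x$ (so that the second case of $(\ref{eq:Gal})$ is in force, where the thickness bound $q_{M_j}\ge 2$ provides a choice $u_{c_j(\alpha_{i_j})}\ne 1$), which is precisely Definition \ref{4.2} of $\mathbf c$ being centrifugally folded with respect to $\mathfrak Q$. I expect the main obstacle to be the panel-by-panel analysis of $\rho_{\mathfrak Q}$ in the second step, namely pinning down $\mathrm{proj}_{\pi'}(\mathfrak Q)$ and matching the resulting folding dichotomy to the sign of $w_{\mathfrak Q}^{-1}\beta_j$ through Proposition \ref{4.3}, together with the careful treatment of the non-wall case $\beta_j\notin\Phi_x$.
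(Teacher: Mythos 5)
Your proposal follows essentially the same route as the paper: induction, reducing everything to a local analysis at the last panel $\pi'$ of type $i_r$, split according to whether $\beta_r\notin\Phi_x$ (thin panel) and, when $\beta_r\in\Phi_x$, according to which side of $M_r$ contains $\mathfrak Q$. Where the paper makes each sub-case explicit through root-group elements and the half-apartments they fix pointwise, you package the same information into the combinatorial projection $\mathrm{proj}_{\pi'}(\mathfrak Q)$ and the behaviour of $\rho_{\mathfrak Q}$ on a panel (which really is a codistance, not gallery-distance, statement in the twin building $\SHI_x^\pm$, but that is a cosmetic slip). Your reading-off of the minimality criterion and of the characterisation of $\mathcal C^m_{\mathfrak Q}(\mathbf c)\ne\emptyset$ is correct.

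There is, however, one piece of the proposition that your argument does not establish: the assertion that $\mathfrak Q$ and $C'_j$ lie together in the apartment $g_1\cdots g_j\A_x$ for all $j$. You correctly pull this from the induction hypothesis for $j=r-1$ and use it (through your fact (i)) to compute the fibre of $\rho_{\mathfrak Q}$ over $C_r$; but after announcing ``everything then reduces to computing the fibre over $C_r$'' you never close the loop by showing that $\mathfrak Q$ and the newly produced chamber $C'_r=g_1\cdots g_r C_x^-$ again lie in $g_1\cdots g_r\A_x$. This is not decorative: it is part of the statement to be proved, and it is exactly what licenses the application of your inductive hypothesis at the next length. The paper handles it case by case, in each instance observing that the conjugate $g_1\cdots g_{r-1}\,u_{c_r(\alpha_{i_r})}\,(g_1\cdots g_{r-1})^{-1}$ pointwise fixes the half-space of $A=g_1\cdots g_{r-1}\A_x$ bounded by $g_1\cdots g_{r-1}H_{\alpha_{i_r}}$ that contains $\mathfrak Q$, whence $\mathfrak Q\in g_1\cdots g_r\A_x$. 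You need an analogous sentence in each of your sub-cases (trivially so in the thin case, where $g_r=c_r\in W^v$ stabilises $\A_x$, i.e.\ $g_1\cdots g_r\A_x=A$).
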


\begin{rema*} For $g_j$ as in equation (\ref{eq:Gal}) we may write $g_j=u_{c_j(\alpha_{i_j})} c_j$ (with $u_{c_j(\alpha_{i_j})}=1$ if $w_{\mathfrak Q}^{-1} \beta_j > 0  \hbox{ or } \qb_j\not\in\QF_x $).
 Then in the product $g_1\cdots g_j$ we may gather the $c_k$ on the right and, as $c_1\cdots c_k(\qa_{i_k})=-\qb_k$, we may write $g_1\cdots g_j=u_{-\qb_1}\cdots u_{-\qb_j}.c_1\cdots c_j$.
 Hence $C'_j:=g_1\cdots g_jC^-_x=u_{-\qb_1}\cdots u_{-\qb_j}C_j$.
 When $u_{-\qb_k}\neq1$ we have $\qb_k\in\QF_x$ and $w_{\mathfrak Q}^{-1} \beta_k<0$; so it is clear that $\qr_{\g Q}(C'_j)=C_j$.
 
 \par The gallery $(C_x^-=C'_0,C'_1,...,C'_r)$ (of type $\mathbf i$) is minimal if, and only if, we may also write (uniquely) $C'_j=u_{-\qa_{i_1}}.u_{r_{i_1}(-\qa_{i_2})}\cdots u_{r_{i_1}\cdots r_{i_{j-1}}(-\qa_{i_j})}.r_{i_1}\cdots r_{i_{j}}(C_x^-)=h_1\cdots h_j.r_{i_1}\cdots r_{i_{j}}(C_x^-)$ with $h_k=u_{r_{i_1}\cdots r_{i_{k-1}}(-\qa_{i_k})}\in\overline U_{r_{i_1}\cdots r_{i_{k-1}}(-\qa_{i_k})}$ (which fixes $C^-_x$). 
 In particular, $C'_j\in h_1\cdots h_j\A_x$. But this formula gives no way to know when $\qr_{\g Q}(C'_j)=C_j$.
  We know only that, when $\qb_k\not\in\QF_x$ \ie $r_{i_1}\cdots r_{i_{k-1}}(-\qa_{i_k})\not\in\QF_x$, we have necessarily $h_k=1$.
\end{rema*}

\begin{proof} As the type $\mathbf i$ of $(C_x^-=C'_0,C'_1,...,C'_r)$ is the type of a minimal decomposition, this gallery is minimal if, and only if, two consecutive chambers are different.
 So the last assertion is a consequence of the first ones.
 We prove these properties for $(C_x^-=C'_0,C'_1,...,C'_j)$ by induction on $j$.
 We write in the following just $H_j$ for
the common limit hyperplane $ H_{\beta_j}$ of $C_{j-1}$ and $C_j$ of type $i_j$.

\par There are five possible relative positions of
$\mathfrak Q$, $C_x^-$ and $C_1$ with respect to $H_1$ and we seek $C'_1$ with $\qr_{\g Q}(C'_1)=C_1$ and $\overline{C'_1}\supset\overline{C_x^-}\cap H_1$.

\par 0) $\qb_1=-c_1\qa_{i_1}\not\in\QF_x$, then $H_1$ is not a wall, each $C'_1$ with $\overline{C'_1}\supset\overline{C_x^-}\cap H_1$ is equal to $C^-_x$ or $r_{i_1}C^-_x$ and $C'_1$ or $C^-_x$ are contained in the same apartments.
 So $C'_1=C_1=c_1C^-_x$; $C_1$ and $\g Q$ are in $g_1\A_x=\A_x$ with $g_1=c_1$.
  When $C'_1=C^-_x$, we have $c_1=1$ and $\mathbf c$ is not centrifugally folded.

  \par We suppose now $\qb_1\in\QF_x$, so $H_1$ is a wall.

\par 1)  $C_x^-$ is on the same side of $H_1$ as $\mathfrak Q$ and $C_1$ not, then $c_1=r_{i_1}$, $\qb_1=\qa_{i_1}$, $w_{\mathfrak Q}^{-1} \beta_1< 0$,
$C'_1 = g_1C_x^- = u_{-\alpha_{i_1}}r_{i_1} C_x^- = u_{-\alpha_{i_1}} C_1$.
But $u_{-\alpha_{i_1}}$ pointwise stabilizes the halfspace bounded by $H_1$
containing $C_x^-$, hence $u_{-\alpha_{i_1}}(\mathfrak Q) = \mathfrak Q$ and $C'_1$ are in the apartment $g_1\A_x$.

\par 2) $\mathfrak Q$ and $C_x^- = C_1$ are separated by $H_1$, then $c_1=1$, $\qb_1=-\qa_{i_1}$, $w_{\mathfrak Q}^{-1} \beta_1< 0$,
$C'_1 = g_1C_x^- = u_{\alpha_{i_1}} C_x^-$ but $u_{\alpha_{i_1}}$ pointwise
stabilizes the halfspace bounded by $H_1$ not containing $C_x^-$, hence
$\mathfrak Q$ and $C'_1$ are in the apartment $g_1\A_x$.

\par 3)  $C_1$ is on the same side of $H_1$ as $\mathfrak Q$ and $C_x^-$ not, then $c_1=r_{i_1}$, $\qb_1=\qa_{i_1}$, $w_{\mathfrak Q}^{-1} \beta_1> 0$ and $C'_1$ has to be $C_1$ so $g_1=c_1=r_{i_1}$,
$w_{\mathfrak Q}^{-1}(\alpha_{i_1}) > 0$,
moreover $\mathfrak Q$ and $C'_1 = r_{i_1}C_x^-=C_1$ are in the apartment $g_1\A_x$.

\par 4) $\mathfrak Q$ and $C_x^- = C_1$ are on the same side of $H_1$.
 Then $c_1=1$ and $w_{\mathfrak Q}^{-1} \beta_1> 0$; the gallery $\mathbf c$ is not centrifugally folded.
 So $\qr_{\g Q}(C'_1)=C_1$ implies $C'_1=C^-_x=g_1C^-_x$ with $g_1=c_1=1$ as in (\ref{eq:Gal}).
  But the gallery $(C_x^-=C'_0,C'_1,...,C'_j)$ cannot be minimal.

\par By induction we assume now that the chambers $\mathfrak Q$ and $C'_{j-1}=g_1\cdots g_{j-1} C_x^-$ are in the
apartment $A_{j-1} = g_1\cdots g_{j-1} \A_x$. Again, we have five possible relative positions
for $\mathfrak Q, C_{j-1}$ and $C_{j}$ with respect to $H_j$.
We seek $C'_j$ with $\qr_{\g Q}(C'_j)=C_j$ and $\overline{C'_j}\supset\overline{C'_{j-1}}\cap g_1\cdots g_{j-1}H_{\qa_{i_j}}$.

\par 0) $\qb_j=-c_1\cdots c_j\qa_{i_j}\not\in\QF_x$, then $H_j$ is not a wall, each $C'_j$ with $\overline{C'_j}\supset\overline{C'_{j-1}}\cap g_1\cdots g_{j-1}H_{\qa_{i_j}}$ is equal to $C'_{j-1}=g_1\cdots g_{j-1} C_x^-$ or $g_1\cdots g_{j-1}r_{i_j} C_x^-$; moreover $C'_j$ or $C'_{j-1}$ are contained in the same apartments.
 So $C'_j=g_1\cdots g_{j-1}c_jC^-_x$ and $\g Q$ are in $g_1\cdots g_{j}\A_x=g_1\cdots g_{j-1}\A_x$ with $g_j=c_j$.
  When $C'_j=C'_{j-1}$, we have $c_j=1$ and $\mathbf c$ is not centrifugally folded.

  \par We suppose now $\qb_j\in\QF_x$, so $H_j$ is a wall.

\par 1) $C_{j-1}$ is on the same side of $H_j=c_1\cdots c_{j-1}H_{\qa_{i_j}}$ as $\mathfrak Q$ and $C_{j}$ not, then $c_j=r_{i_j}$, $\qb_j=c_1\cdots c_{j-1}\qa_{i_j}$, $w_{\mathfrak Q}^{-1} \beta_j< 0$.
 Moreover $\mathfrak Q$ and $C'_{j-1}$ are on the same side of $g_1\cdots g_{j-1} H_{\qa_{i_j}}$ in $A_{j-1}$, and
$$
\begin{array}{rcl}
C'_j & =  &g_1\cdots g_{j-1} u_{-\alpha_{i_j}}r_{i_j} C_x^- \\
 & = & g_1\cdots g_{j-1} u_{-\alpha_{i_j}}r_{i_j} (g_1\cdots g_{j-1})^{-1} C'_{j-1}\\
 & = &  g_1\cdots g_{j-1} u_{-\alpha_{i_j}} (g_1\cdots g_{j-1})^{-1} g_1\cdots g_{j-1} r_{i_j} (g_1\cdots g_{j-1})^{-1} C'_{j-1},
\end{array}
$$  where $g_1\cdots g_{j-1} r_{i_j} (g_1\cdots g_{j-1})^{-1} C'_{j-1}$ is the chamber adjacent to
$C'_j$ along $g_1\cdots g_{j-1} H_{\qa_{i_j}}$ in $A_{j-1}$. Moreover, $g_1\cdots g_{j-1} u_{-\alpha_{i_j}} (g_1\cdots g_{j-1})^{-1}$ pointwise stabilizes the halfspace bounded by $ g_1\cdots g_{j-1} H_{\qa_{i_j}}$
containing $C'_{j-1}$ and $\mathfrak Q$. So $\mathfrak Q$ and $C'_j$ are in the apartment $g_1\cdots g_j \A_x$.

\par 2) $C_{j-1} = C_j$ and $\mathfrak Q$ are separated by $H_j$, then $c_j=1$, $\qb_j=-c_1\cdots c_{j-1}\qa_{i_j}$, $w_{\mathfrak Q}^{-1} \beta_j< 0$.
 Moreover $C'_{j-1}$ and $\mathfrak Q$ are separated by $g_1\cdots g_{j-1} H_{\qa_{i_j}}$ in $A_{j-1}$, and $\mathfrak Q$ and the
chamber
$$
g_1\cdots g_{j-1} r_{i_j} (g_1\cdots g_{j-1})^{-1} C'_{j-1}
$$
are on the same side of this wall. For $u_{\alpha_{i_j}}\ne 1$
$$
C'_j =  g_1\cdots g_{j-1} u_{\alpha_{i_j}} C_x^- = g_1\cdots g_{j-1}
u_{\alpha_{i_j}}(g_1\cdots g_{j-1})^{-1} C'_{j-1}
$$
is a chamber adjacent (or equal) to $C'_{j-1}$ along $g_1\cdots g_{j-1} H_{\qa_{i_j}}= g_1\cdots g_{j-1} u_{\alpha_{i_j}} H_{\qa_{i_j}}$
in $g_1\cdots g_j \A_x$ (with $g_j=u_{\alpha_{i_j}}$).

The root-subgroup  $g_1\cdots g_{j-1} U_{\alpha_{i_j}}(g_1\cdots g_{j-1})^{-1}$ pointwise
stabilizes the halfspace bounded by $ g_1\cdots g_{j-1} H_{\qa_{i_j}}$ and containing the chamber
$ g_1\cdots g_{j-1} r_{i_j} (g_1\cdots g_{j-1})^{-1} C'_{j-1}$. So $\mathfrak Q$ and $C'_j$
are in the apartment $g_1\cdots g_j \A_x$.

\par 3)  $C_j$ is on the same side of $H_j=c_1\cdots c_{j-1}H_{\qa_{i_j}}$ as $\mathfrak Q$ and $C_{j-1}$ not, then $c_j=r_{i_j}$, $\qb_j=c_1\cdots c_{j-1}\qa_{i_j}$, $w_{\mathfrak Q}^{-1} \beta_j> 0$.
 and so $C'_j =g_1\cdots g_{j-1} r_{i_j} C_x^-$. Whence
$\mathfrak Q$ and $C'_j$ are in the apartment $g_1\cdots g_j \A_x$.

\par 4) $C_{j-1} = C_j$ and $\mathfrak Q$ are on the same side of $H_j=c_1\cdots c_{j-1}H_{\qa_{i_j}}$, then $c_j=1$, $\qb_j=-c_1\cdots c_{j-1}\qa_{i_j}$ and $w_{\mathfrak Q}^{-1} \beta_j>0$.
 The gallery $\mathbf c$ is not centrifugally folded.
  So $\qr_{\g Q}(C'_j)=C_j$ implies $C'_j=C_{j-1}'=g_1\cdots g_jC^-_x$ with $g_j=c_j=1$ as in (\ref{eq:Gal}).
  But the gallery $(C_x^-=C'_0,C'_1,...,C'_j)$ cannot be minimal.
\end{proof}

\begin{coro}
\label{4.5}  If $\mathbf c \in \Gamma_{\mathfrak Q}^+(\mathbf i)$, then the number of elements in $\mathcal C^m_{\mathfrak Q}(\mathbf c)$ is:
$$
\sharp \mathcal C^m_{\mathfrak Q}(\mathbf c) = \prod_{k=1}^{t(\mathbf c)} q_{j_k} \times \prod_{l=1}^{r(\mathbf c)} (q_{j_l} - 1)
$$
 where $q_j=q_{x,\qb_j}=q_{x,\qa_{i_j}}\in\shq$, $t(\mathbf c) = \sharp\{j\mid c_j = r_{i_j}, \qb_j\in\QF_x \hbox{ and } w_{\mathfrak Q}^{-1} \beta_j < 0\}$ and
$r(\mathbf c) = \sharp\{j\mid c_j = 1, \qb_j\in\QF_x \hbox{ and } w_{\mathfrak Q}^{-1} \beta_j < 0\}$.
\end{coro}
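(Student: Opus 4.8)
The plan is to read the formula off directly from the explicit description of $\mathcal{C}^m_{\mathfrak{Q}}(\mathbf{c})$ in Proposition~\ref{4.4}. Since $\mathbf{c}\in\Gamma_{\mathfrak{Q}}^+(\mathbf{i})$ is centrifugally folded, that proposition tells us $\mathcal{C}^m_{\mathfrak{Q}}(\mathbf{c})$ is non-empty and consists exactly of the galleries $(C_x^-=C'_0,C'_1,\dots,C'_r)$ with $C'_j=g_1\cdots g_j C_x^-$, where each $g_j$ is of the form prescribed by~(\ref{eq:Gal}) and, in addition, $u_{c_j(\alpha_{i_j})}\neq 1$ at every index $j$ with $c_j=1$ and $w_{\mathfrak{Q}}^{-1}\beta_j<0$. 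First I would record that the other minimality condition in Proposition~\ref{4.4} --- that $c_j\neq 1$ whenever $w_{\mathfrak{Q}}^{-1}\beta_j>0$ or $\beta_j\notin\QF_x$ --- is automatic here, since centrifugal folding says precisely that $c_j=1$ forces $\beta_j\in\QF_x$ and $w_{\mathfrak{Q}}^{-1}\beta_j<0$. Hence a minimal gallery over $\mathbf{c}$ is nothing but a choice, made independently at each index $j$, of an element $g_j$ allowed by~(\ref{eq:Gal}), subject only to $u_{c_j(\alpha_{i_j})}\neq 1$ at the indices with $c_j=1$ and $w_{\mathfrak{Q}}^{-1}\beta_j<0$.

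Then I would count index by index. For $j$ with $w_{\mathfrak{Q}}^{-1}\beta_j>0$ or $\beta_j\notin\QF_x$, formula~(\ref{eq:Gal}) forces $g_j=c_j$, so the factor is $1$. For $j$ with $\beta_j\in\QF_x$ and $w_{\mathfrak{Q}}^{-1}\beta_j<0$ there are two sub-cases. If $c_j=r_{i_j}$ (an index counted by $t(\mathbf{c})$), then $g_j=u_{c_j(\alpha_{i_j})}c_j$ with $u_{c_j(\alpha_{i_j})}$ ranging freely over the finite group $\overline U_{c_j(\alpha_{i_j})}$, of cardinality $q_j\in\shq$, and with no further restriction, so the factor is $q_j$. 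If $c_j=1$ (an index counted by $r(\mathbf{c})$), then $g_j=u_{c_j(\alpha_{i_j})}$ with $u_{c_j(\alpha_{i_j})}$ ranging over $\overline U_{c_j(\alpha_{i_j})}\setminus\{1\}$, so the factor is $q_j-1$. Multiplying the factors over all $j$ gives $\prod_{k=1}^{t(\mathbf{c})}q_{j_k}\times\prod_{l=1}^{r(\mathbf{c})}(q_{j_l}-1)$, the claimed value.

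The one point needing a little care is that this product really counts $\mathcal{C}^m_{\mathfrak{Q}}(\mathbf{c})$ exactly, i.e.\ that distinct tuples of admissible choices $(g_j)_j$ give distinct minimal galleries. This is where one uses that each finite root group $\overline U_\gamma$ acts simply transitively on the chambers through the relevant panel other than the one it fixes: this lets one recover $g_j$, hence the chosen $u_{c_j(\alpha_{i_j})}$, from the pair $(C'_{j-1},C'_j)$, proceeding inductively from $C'_0=C_x^-$, so that admissible tuples correspond bijectively to minimal galleries. I would expect no genuine obstacle beyond this: everything else is routine bookkeeping on top of Proposition~\ref{4.4}, including the identification $q_j=q_{x,\beta_j}=q_{x,\alpha_{i_j}}\in\shq$ of the size of $\overline U_{c_j(\alpha_{i_j})}$, which follows from $c_j(\alpha_{i_j})=\pm\alpha_{i_j}$ together with the $\qn(N)$-invariance of the parameters recalled at the beginning of this section.
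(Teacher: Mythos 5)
Your argument is correct and is precisely the intended reading-off from Proposition~\ref{4.4}; the paper offers no separate proof of the corollary, so what you wrote is effectively the paper's own argument. One small point worth tightening: the equality $q_{x,\beta_j}=q_{x,\alpha_{i_j}}$ does not quite follow from $\qn(N)$-invariance alone (the affine walls $M(\beta_j,-\beta_j(x))$ and $M(\alpha_{i_j},-\alpha_{i_j}(x))$ need not be related by an element of $\qn(N)$ when $x$ is not special), but rather from the fact that the panel between $C'_{j-1}$ and $C'_j$ is the image of the type-$i_j$ panel of $C^-_x$ under the automorphism $g_1\cdots g_{j-1}\in\overline G_x$ and maps to $M_{\beta_j}$ under the restriction of $\qr_{\mathfrak Q}$ to an apartment containing it, both of which preserve panel thicknesses.
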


\subsection{Galleries and opposite segment germs}
\label{4.6}

Suppose now $x\in\A\cap\SHI^+$. 
Let $\xi$ and $\eta$ be two segment germs in $\A_x^+$. Let $-\eta$ and $-\xi$ opposite respectively $\eta$ and $\xi$ in $\A_x^-$.
 Let $\mathbf i$ be the type of a minimal gallery between $C_x^-$ and $C_{-\xi}$, where $C_{-\xi}$ is the negative (local) chamber containing $-\xi$ such that $w(C_x^-, C_{-\xi})$ is of minimal length.
  Let $\mathfrak Q$ be a chamber of $\A_x^+$ containing $\eta$. We suppose $\qx$ and $\eta$ conjugated by $W^v_x$.


\begin{lemm*} The following conditions are equivalent:
\par (i) There exists an opposite $\zeta$ to $\eta$ in $\SHI_x^-$ such that $\rho_{\A_x, C^-_x} ( \zeta) = -\xi$.

\par (ii)  There exists a gallery $\mathbf c \in \Gamma_{\mathfrak Q}^+(\mathbf i)$ ending in $-\eta$.

\par (iii) $\qx\leq{}_{W^v_x}\,\eta$ (in the sense of \ref{de:Hecke}, with $\QF^+$ defined as in \ref{4.1} using $C^-_x$).

\par  Moreover the possible $\qz$ are in one-to-one correspondence with the disjoint union of the sets  $\mathcal C^m_{\mathfrak Q}(\mathbf c)$ for $\mathbf c$ in the set $\Gamma_{\mathfrak Q}^+(\mathbf i,-\eta)$ of galleries in $\Gamma_{\mathfrak Q}^+(\mathbf i)$ ending in $-\eta$.
  More precisely, if $\mathbf m\in \mathcal C_{\mathfrak Q}(\mathbf c)$ is associated to $(h_1,\cdots,h_r)$ as in remark \ref{4.4}, then $\qz=h_1\cdots h_r(-\qx)$.
\end{lemm*}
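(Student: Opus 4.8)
The plan is to prove the chain of equivalences (i) $\Leftrightarrow$ (ii) $\Leftrightarrow$ (iii) and then the bijective correspondence, reducing everything to the combinatorial and geometric machinery already set up for twinned buildings $\SHI_x^\pm$ and for centrifugally folded galleries. I would work throughout inside the twin building $(\SHI_x^-,\SHI_x^+)$ at the point $x$, where $\overline G_x$ acts strongly transitively and where the retraction $\rho_{\A_x,C_x^-}$ and the retraction $\qr_{\g Q}$ of Proposition \ref{4.4} both live.

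For (ii) $\Leftrightarrow$ (iii): this is the local building version of the equivalence between the existence of a positively folded gallery and the Hecke-path (or chain) condition, exactly as in \cite{GL05} and \cite{KM08} adapted to the unrestricted twin building of $\SHI_x$. First I would observe that a gallery $\mathbf c\in\Gamma(\mathbf i)$ from $C_x^-$ ending in $-\eta$ encodes, via its sequence of walls $M_j=M_{\qb_j}$ and the folding positions (the $j$ with $c_j=1$), a $W_x^v$-chain connecting $\qx$ to $\eta$ in the sense of \ref{de:Hecke}; the condition ``$\mathbf c$ centrifugally folded with respect to $\g Q$'' translates (by Proposition \ref{4.3}) precisely into the positivity conditions ii) and iv) of \ref{de:Hecke}, since $M_j$ separating $\g Q$ from $C_j=C_{j-1}$ says $w_{\g Q}^{-1}\qb_j<0$, i.e. $\qb_j$ is positive for the chamber $C_x^-$ and the fold is centrifugal with respect to $\g Q$. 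Conversely, a $W_x^v$-chain realizing $\qx\leq_{W_x^v}\eta$ can be read off as such a folded gallery, choosing $\mathbf i$ as the type of a minimal gallery from $C_x^-$ to $C_{-\xi}$ of minimal length. This is essentially bookkeeping, but one has to be careful that the unrestricted structure is used (so the Weyl group is all of $W^v$) and that the ``minimal length'' choice of $C_{-\xi}$ makes $\mathbf i$ reduced, which is what lets Proposition \ref{4.4} apply.

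For (i) $\Leftrightarrow$ (ii) and the final bijection: given $\mathbf c\in\Gamma_{\g Q}^+(\mathbf i)$ ending in $-\eta$ and an element $\mathbf m\in\mathcal C_{\g Q}^m(\mathbf c)$, Proposition \ref{4.4} gives a minimal gallery $(C_x^-=C'_0,\dots,C'_r)$ with $C'_r=h_1\cdots h_r r_{i_1}\cdots r_{i_r}(C_x^-)$, lying in the apartment $h_1\cdots h_r\A_x$ and retracting onto $\mathbf c$ under $\qr_{\g Q}$; since $r_{i_1}\cdots r_{i_r}$ sends $C_x^-$ to a chamber containing $-\qx$, and the $h_k$ fix $C_x^-$ hence stabilize $x$, the segment germ $\qz:=h_1\cdots h_r(-\qx)$ is opposite in $h_1\cdots h_r\A_x$ to the positive germ $h_1\cdots h_r(\qx)=\qx=\eta$... here I must be slightly more careful: I would instead track that $C'_r$ contains $\qz$ and is opposite (in that apartment) to a positive chamber containing $\eta$, so $\qz$ is opposite to $\eta$ in $\SHI_x$, and that $\qr_{\g Q}$ applied along the minimal gallery recovers the folded gallery ending in $-\eta$, whence $\rho_{\A_x,C_x^-}(\qz)=-\xi$ because the source chamber $C_x^-$ of all galleries is fixed and the gallery type is reduced (so the two retractions agree on the relevant germ). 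Conversely, starting from an opposite $\qz$ with $\rho_{\A_x,C_x^-}(\qz)=-\xi$, I would take a minimal gallery from $C_x^-$ to the chamber $C_{-\xi}$ containing $-\qx$ through $\qz$'s apartment, project it by $\qr_{\g Q}$ to get a gallery $\mathbf c\in\Gamma(\mathbf i)$, and check via Proposition \ref{4.4} that minimality forces $\mathbf c\in\Gamma_{\g Q}^+(\mathbf i)$ and that $\mathbf c$ ends in $-\eta$ (because $\rho_{\A_x,C_x^-}(\qz)=-\xi$ pins down the endpoint on the $-\xi$ side, and the $\g Q$-side retraction pins down $-\eta$). The one-to-one statement is then just that the map $\mathbf m\mapsto\qz$ is injective (distinct minimal galleries in $\bigsqcup_{\mathbf c}\mathcal C_{\g Q}^m(\mathbf c)$ give distinct final chambers $C'_r$, hence distinct germs, by strong transitivity and the explicit form of the $g_j$) and surjective (by the construction just described).

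The main obstacle I expect is the careful matching of the two retractions $\rho_{\A_x,C_x^-}$ and $\qr_{\g Q}$ along the minimal gallery: one must verify that a minimal gallery in $\mathcal C_{\g Q}^m(\mathbf c)$ is simultaneously ``unfolded'' by $\rho_{\A_x,C_x^-}$ (since it starts at $C_x^-$ and is minimal, it stays in one apartment up to that retraction, giving exactly $-\xi$ at the far end) and retracted by $\qr_{\g Q}$ to the prescribed folded $\mathbf c$ ending in $-\eta$. Making this precise requires invoking the standard facts about retractions centered at a chamber versus retractions centered at a chamber at infinity (here both are chamber-centered, in the finite twin building $\SHI_x$), together with the explicit apartment $h_1\cdots h_r\A_x$ from Proposition \ref{4.4}; everything else is a translation exercise between galleries, chains, and Hecke paths that has been done before in \cite{GL05}, \cite{KM08}, \cite{GR08}. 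I would therefore spend most of the write-up on that matching and treat the chain/gallery dictionary (ii) $\Leftrightarrow$ (iii) as essentially formal given \ref{de:Hecke}, \ref{4.2} and \ref{4.3}.
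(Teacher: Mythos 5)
Your plan for (i) $\iff$ (ii) and the final bijection is essentially the paper's: both pass through Proposition \ref{4.4} and its Remark, exploiting that the elements $h_k$ fix $C_x^-$ so one can explicitly track the two chamber-centered retractions $\rho_{\A_x,C_x^-}$ (which unfolds the minimal gallery $\mathbf m$) and $\rho_{\A_x,\g Q}$ (which refolds it onto $\mathbf c$), and that a minimal gallery of the fixed type $\mathbf i$ from $C_x^-$ to a chamber containing $\qz$ is unique, so $\mathbf m\leftrightarrow\qz$ is a bijection. The one genuine divergence is how you close the triangle: you propose to prove (ii) $\iff$ (iii) directly as a gallery/chain dictionary (deferring to the LS-gallery literature), whereas the paper proves (i) $\iff$ (iii) by citing \cite[Prop.\ 6.1, Th.\ 6.3]{GR08}, noting that the argument there is purely local to $\SHI_x$. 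Either route works; the paper's is marginally more economical because it outsources the hard local combinatorics to an already-proved statement rather than re-deriving the folded-gallery/$W^v_x$-chain correspondence, but the content is the same. Two small warnings about details in your sketch, which you yourself partially flag: the identity $h_1\cdots h_r(\qx)=\qx=\eta$ is false — the $h_k$ fix $C_x^-$ but not $\eta$, and $\qx\ne\eta$ in general — and the slogan ``the two retractions agree on the relevant germ'' is not the actual reason $\rho_{\A_x,C_x^-}(\qz)=-\qx$. The correct mechanism, which the paper uses and which you pivot towards, is: minimality of $\mathbf m$ (of type $\mathbf i$) forces $\rho_{\A_x,C_x^-}$ to send $\mathbf m$ to the unique type-$\mathbf i$ minimal gallery in $\A_x^-$ from $C_x^-$ to $C_{-\qx}$, hence $\rho_{\A_x,C_x^-}(\qz)\in C_{-\qx}$; one then identifies this image with $-\qx$ because both lie in the single chamber $C_{-\qx}$ and are of the same $W^v_x$-type (each being opposite a $W^v_x$-conjugate of $\eta$), and a local chamber contains at most one germ of each type.
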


\begin{proof}
If $\zeta\in\SHI_x^-$ opposites $\eta$ and if $\rho_{\A_x, C^-_x} ( \zeta) = -\xi$, then any minimal gallery $\mathbf m = (C_x^-,M_1,...,M_r\ni\zeta)$ retracts onto a minimal gallery between $C_x^-$ and $C_{-\xi}$.
 So we can as well assume that $\mathbf m$ has type $\mathbf i = (i_1,...,i_r)$ and then $\qz$ determines $\mathbf m$.
 Now, if we retract $\mathbf m$ from $\mathfrak Q$, we get a gallery $\mathbf c = \rho_{\A_x, \mathfrak Q} (\mathbf m)$ in $\A_x^-$ starting at $C_x^-$, ending in $-\eta$ and centrifugally folded with respect to $\mathfrak Q$.

\par
Reciprocally, let $\mathbf c = (C_x^-,C_1,...,C_r) \in \Gamma_{\mathfrak Q}^+(\mathbf i)$, such that $-\eta\in C_r$. According to proposition and remark \ref{4.4},
there exists a minimal gallery $\mathbf m = (C^-_x,C'_1,...,C'_r)$ in the set $\mathcal C_{\mathfrak Q}(\mathbf c)$, and
the chambers $C_j'$ can be described by $C'_j = g_1\cdots g_j C_x^-=h_1\cdots h_j.r_{i_1}\cdots r_{i_j} C_x^-$ where each $h_k$ fixes $C^-_x$, hence $\rho_{\A_x, C^-_x}$ restricts on $C'_j$ to the action of $(h_1\cdots h_j)^{-1}$.


\par Let $\zeta\in C'_r$ opposite $\eta$ in any apartment containing those two.
The minimality of the gallery $\mathbf m = (C^-_x,C'_1,...,C'_r)$ ensures that $\rho_{\A_x,C^-_x}(\qz)\in C_{-\qx}$; hence $\rho_{\A_x,C^-_x}(\zeta) = -\xi$ as they are both opposite $\eta$ up to conjugation by $W^v_x$.

\par So we proved the equivalence $(i)\iff(ii)$ and the last two assertions.

\par Now the equivalence $(i)\iff(iii)$ is proved in \cite[Prop. 6.1 and Th. 6.3]{GR08}: in this reference we speak of Hecke paths with respect to $-C^v_f$, but the essential part is a local discussion in $\SHI_x$ (using only $C^-_x$ and the twin building structure of $\SHI_x^\pm{}$) which gives this equivalence.
\end{proof}

\subsection{Liftings of Hecke paths}
\label{4.7}

Let $\pi$ be a $\lambda-$path from $z'\in Y^+$ to $y\in Y^{+}$ entirely contained in the Tits cone $\sht$, hence in a finite union of closed sectors $w\overline{C^v_f}$ with $w\in W^v$.
 By \cite[5.2.1]{GR08}, for each $w\in W^v$ there is only a finite number of $s\in]0,1]$ such that the reverse path $\bar\pi(t) = \pi(1-t)$ leaves, in $\qp(s)$, a wall positively with respect to $-w\overline{C^v_f}$, \ie this wall separates $\pi_-(s)$ from $-w\overline{C^v_f}$.
  Therefore, we are able to define $\ell\in\N$ and $0<t_1<t_2<\cdots<t_\ell\leq{}1$ such that the  $z_k = \pi(t_k)$, $k\in\{1,..., \ell\}$ are the only points in the path where at least one wall containing  $z_k $ separates $\qp_-(t_k )$ and the local chamber $\g c_-$ of \ref{1.9}.2.

 \par  For each $k\in\{1,..., \ell\}$ we choose for $C^-_{z_k}$ (as in \ref{4.1}) the germ in $z_k$ of the sector of vertex $z_k$ containing $\g c_-$.
  Let $\mathbf i_k$ be a fixed reduced decomposition of the element $w_-(t_k)\in W^v$ and let $\mathfrak Q_k$ be a fixed chamber in $\SHI^+_{z_k}$ containing $\eta_k = \pi_+(t_k)$.
  We note $-\qx_k=\qp_-(t_k)$. When $\qp$ is a Hecke path (or a billiard path as in \cite{GR08}), $\qx_k$ and $\eta_k$ are conjugated by $W^v_{z_k}$.

  \par When $\qp$ is a Hecke path with respect to $\g c_-$, $\{z_1,\cdots,z_\ell\}$ includes all points where the piecewise linear path $\qp$ is folded and, in the other points, all galleries in $\QG^+_{\g Q_k}(\mathbf i_k,-\eta_k)$ are unfolded.

Let $S_{\g c_-}(\pi, y)$ be the set of all segments $[z,y]$ such that $\rho_{\g c_-} ([z,y]) = \pi$.

\begin{theo}\label{4.8} $S_{\g c_-}(\pi, y)$ is non empty if, and only if,  $\pi$ is a Hecke path with respect to $\g c_-$. Then, we have a bijection
$$
S_{\g c_-}(\pi, y)\simeq \prod_{k=1}^\ell \coprod_{\mathbf c\in\Gamma_{\mathfrak Q_k}^+(\mathbf i_k,-\eta_k)} \mathcal C^m_{\mathfrak Q_k} (\mathbf c)
$$

\par In particular the number of elements in this set is a polynomial  in the numbers $q\in\shq$ with coefficients in $\Z_{\geq{}0}$ depending only on $\A$.
\end{theo}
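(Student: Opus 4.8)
The plan is to reduce the global statement about liftings of the Hecke path $\pi$ to the sequence of purely local statements already established in the Lemma of \ref{4.6}, and then assemble the count multiplicatively. First I would recall that a segment $[z,y]$ with $\rho_{\g c_-}([z,y])=\pi$ is determined, once we walk backwards from $y$, by the successive choices made at each folding point of $\pi$: away from the points $z_1,\dots,z_\ell$ the lifting is forced (there the retraction is locally an isomorphism of apartments, since no wall through the current point separates $\pi_-$ from $\g c_-$, so no genuine choice of adjacent chamber is available and the gallery is unfolded), while at each $z_k$ the freedom is exactly the freedom of lifting the germ $\pi_+(t_k)=\eta_k$ to an opposite of $-\qx_k=\pi_-(t_k)$ inside $\SHI_{z_k}^-$ that retracts onto the prescribed direction. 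This is precisely the situation analysed in the Lemma of \ref{4.6}, with $\eta=\eta_k$, $\xi=\qx_k$, and $\mathfrak Q=\mathfrak Q_k$; the hypothesis that $\qx_k$ and $\eta_k$ are $W^v_{z_k}$-conjugate is guaranteed because $\pi$ is a Hecke path (condition \ref{de:Hecke}, or its $\g c_-$-variant in Remark \ref{1.9}.2).

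Next I would make the ``only if'' direction: if $S_{\g c_-}(\pi,y)\neq\emptyset$, pick a lifting $[z,y]$; at each folding vertex $z_k$ the induced local opposite $\zeta_k$ satisfies condition (i) of the Lemma of \ref{4.6}, hence (iii) there, i.e. $\qx_k\leq_{W^v_{z_k}}\eta_k$, which is exactly the chain condition \ref{de:Hecke}(i)--(iv') defining a Hecke path with respect to $\g c_-$ at $t_k$; at the non-folding points the condition is vacuous. So $\pi$ is a Hecke path with respect to $\g c_-$. For the ``if'' direction and the bijection, assume $\pi$ is such a Hecke path. Then for each $k$, condition (iii) of the Lemma holds, hence $\Gamma_{\mathfrak Q_k}^+(\mathbf i_k,-\eta_k)\neq\emptyset$ and the possible local opposites $\zeta_k$ are in bijection with $\coprod_{\mathbf c\in\Gamma_{\mathfrak Q_k}^+(\mathbf i_k,-\eta_k)}\mathcal C^m_{\mathfrak Q_k}(\mathbf c)$. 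I would then build the global bijection by induction on $k$, walking from $y$ backwards along $\pi$: having lifted $\pi|_{[t_k,1]}$ to a partial segment ending at some $z^{(k)}$ lying in an apartment $A^{(k)}$ that also contains (the germ towards) $\g c_-$, the next stretch $\pi|_{[t_{k-1},t_k]}$ lifts uniquely inside $A^{(k)}$ up to the vertex $z_{k-1}$, and there the Lemma of \ref{4.6} gives exactly the indexed set of choices of the next local opposite, each choice producing a new apartment (via the explicit chambers $C'_j=h_1\cdots h_j r_{i_1}\cdots r_{i_j}C^-_x$ of Proposition \ref{4.4} and its Remark) in which the induction continues. Conversely a family of local choices reconstructs a unique lifting because each stretch between consecutive $z_k$'s is rigid; this gives the displayed product-coproduct bijection.

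Finally, for the polynomiality statement I would simply feed Corollary \ref{4.5} into the bijection: $\sharp S_{\g c_-}(\pi,y)=\prod_{k=1}^\ell\sum_{\mathbf c\in\Gamma_{\mathfrak Q_k}^+(\mathbf i_k,-\eta_k)}\prod_{p} q_{j_p}\prod_{l}(q_{j_l}-1)$, where the $q$'s range over $\shq$ and the exponents $t(\mathbf c),r(\mathbf c)$ and the combinatorial index sets $\Gamma_{\mathfrak Q_k}^+(\mathbf i_k,-\eta_k)$ depend only on the sequence of reduced words $\mathbf i_k$, the Weyl data $w_{\mathfrak Q_k}$, and the positions of the $z_k$ relative to the walls, i.e. only on $\A$ and the path $\pi$, not on $\SHI$. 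Since each factor is a polynomial in the $q\in\shq$ with non-negative integer coefficients and such polynomials are closed under sums and products, so is the total count.

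The main obstacle is the ``if'' direction of the bijection: one must be careful that the inductive lifting is genuinely well-defined and exhaustive, i.e. that between two consecutive folding points the lifting inside the current apartment is forced (no hidden branching occurs because some wall we did not list separates the path from $\g c_-$), and that every lifting arises this way. This is exactly where the careful bookkeeping of \ref{4.7} — the choice of $\ell$, the $t_k$, the germs $C^-_{z_k}$ pointing at $\g c_-$, and the control from \cite[5.2.1]{GR08} that only finitely many such separations occur — is needed, together with the compatibility, provided by Proposition \ref{4.4}, that at each $z_k$ the chamber $\mathfrak Q_k$ and the freshly constructed chamber $C'_j$ land in a common apartment so the retraction $\rho_{\g c_-}$ can be tracked through the next stretch. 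Once this rigidity-between-folds is in hand, the equivalence with the existence of a Hecke path and the product formula follow formally from \ref{4.6} and \ref{4.5}.
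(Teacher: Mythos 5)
Your proposal follows essentially the same route as the paper: a decreasing induction from $y$ backwards along $\pi$, using the Lemma of~\ref{4.6} to parametrize the local freedom at each $z_k$ by $\coprod_{\mathbf c\in\Gamma_{\mathfrak Q_k}^+(\mathbf i_k,-\eta_k)}\mathcal C^m_{\mathfrak Q_k}(\mathbf c)$, rigidity of the lifting on the intervening stretches, and Corollary~\ref{4.5} for the polynomiality.

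There is, however, one step you do not address and which the paper treats with a separate lemma. After assembling the lifting from a family of local choices $(\mathbf m_1,\dots,\mathbf m_\ell)$, what you have in hand is a $\lambda$-path $s$ with $s(1)=y$ and $\rho_{\g c_-}\circ s=\pi$, each piece $s|_{[t_{k-1},t_k]}$ a genuine segment, and the two germs of $s$ at each $s(t_k)$ opposite to one another in the twin building $\SHI_{s(t_k)}$. You then need to know that $s$ is a single segment of $\SHI$, i.e.\ that local straightness at each $z_k$ implies global straightness; otherwise the constructed object does not a priori belong to $S_{\g c_-}(\pi,y)$. The paper isolates this as Lemma~\ref{4.9}: if $x\leq y\leq z$ and the germs $[y,x)$, $[y,z)$ are opposite in $\SHI_y$, then $[x,y]\cup[y,z]=[x,z]$; the proof uses axioms (MAO) and (MA2) together with a compactness argument along $[y,z]$. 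Without this, your inverse map from $\prod_k\coprod_{\mathbf c}\mathcal C^m_{\mathfrak Q_k}(\mathbf c)$ to $S_{\g c_-}(\pi,y)$ is not shown to land in $S_{\g c_-}(\pi,y)$. Once this point is supplied, your argument matches the paper's.
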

\begin{NB} So the image by $\qr_{\g c_-}$ of a segment in $\SHI^+$ is a Hecke path with respect to $\g c_-$.
\end{NB}
\begin{proof} The restriction of $\qr_{\g c_-}$ to $\SHI_{z_k}$ is clearly equal to $\qr_{\A_{z_k},C^-_{z_k}}$; so the lemma \ref{4.6} tells that $\qp$ is a Hecke path with respect to $\g c_-$ if, and only if, each $\Gamma_{\mathfrak Q_k}^+(\mathbf i_k,-\eta_k)$ is non empty.

\par We set $t_0 = 0$ and $t_{\ell+1} = 1$. We shall build a bijection
from $S_{\g c_-}(\pi_{\vert[t_{n-1},1]}, y)$ onto $ \prod_{k=n}^\ell \coprod_{\mathbf c\in\Gamma_{\mathfrak Q_k}^+(\mathbf i_k,-\eta_k)} \mathcal C^m_{\mathfrak Q_k} (\mathbf c)$
 by decreasing induction on $n\in\{1,\cdots,\ell+1\}$.
  For $n=\ell+1$ and if $t_\ell\not=1$, no wall cutting $\qp([t_\ell,1])$ separates $y=\qp(1)$ from $\g c_-$; so a segment $s$ in $\SHI$ with $s(1)=y$ and $\rho_{\g c_-}\circ s=\qp$ has to coincide with $\qp$ on $[t_\ell,1]$.

  \par Suppose now that $s\in S_{\g c_-}(\pi_{\vert[t_{n},1]}, y)$ is determined, in the following way,  by a unique element in $ \prod_{k=n+1}^\ell \coprod_{\mathbf c\in\Gamma_{\mathfrak Q_k}^+(\mathbf i_k,-\eta_k)} \mathcal C^m_{\mathfrak Q_k} (\mathbf c)$:
  For an element $(\mathbf m_{n+1}, \mathbf m_{n+2},...,\mathbf m_\ell)$ in this last set, each $\mathbf m_k = (C^-_{z_k},C^k_1,...,C^k_{r_k})$ is a minimal gallery given by a sequence of elements $(h^k_1,...,h^k_{r_k})\in (\overline G_{z_k})^{r_k}$, as in remark \ref{4.4} and, for $t\in[t_n,t_{n+1}]$, we have $s(t)=(h^\ell_1...h^\ell_{r_\ell})\cdots(h^{n+1}_1...h^{n+1}_{r_{n+1}})\pi(t)$ where actually each $h^k_j$ is a chosen element of $U_{-r_{i_1}\cdots r_{i_{j-1}}(\qa_{i_{j}})}$ whose class in $\overline U_{-r_{i_1}\cdots r_{i_{j-1}}(\qa_{i_{j}})}$ is the $h^k_j$ defined above; in particular each  $h^k_j$ fixes $\g c_-$.

\par We note $g=(h^\ell_1...h^\ell_{r_\ell})\cdots(h^{n+1}_1...h^{n+1}_{r_{n+1}})\in G_{\g c_-}$. Then $g^{-1}s(t_n)=\qp(t_n)=z_n$.

\par If $s\in S_{\g c_-}(\pi_{\vert[t_{n-1},1]}, y)$ and $s_{\vert[t_{n},1]}$ is as above, then $g^{-1}s_-(t_n)$ is a segment germ in $\SHI^-_{z_n}$ opposite $g^{-1}s_+(t_n)=\qp_+(t_n)=\eta_n$ and retracting to $\qp_-(t_n)$ by $\qr_{\g c_-}$.
 By lemma \ref{4.6} and the above remark, this segment germ determines uniquely a minimal gallery $\mathbf m_n\in \mathcal C^m_{\mathfrak Q_n} (\mathbf c)$ with $\mathbf c\in\Gamma_{\mathfrak Q_n}^+(\mathbf i_n,-\eta_n)$.

\par Conversely such a minimal gallery $\mathbf m_n$ determines a segment germ $\qz\in\SHI^-_{z_n}$, opposite $\qp_+(t_n)=\eta_n$ such that $\qr_{\A_{z_n},C^-_{z_n}}(\qz)=\qp_-(t_n)$.
 By lemma \ref{4.6}, $\qz=(h^{n}_1...h^{n}_{r_{n}})\pi_-(t_n)$ for some well defined $(h^{n}_1,\cdots,h^{n}_{r_{n}})\in(\overline G_{z_n})^{r_n}$.
  As above we replace each $g^n_j$ by a chosen element of $G_{(z_n\cup\g c_-)}$ whose class in $\overline G_{z_n}$ is this $g^n_j$.
  As no wall cutting $[z_{n-1},z_n]$ separates $z_n=\qp(t_n)$ from $\g c_-$, any segment retracting by $\rho_{\g c_-}$ onto $[z_{n-1},z_n]$ and with $[z_n,x)=\qp_-(t_n)$ (resp. $=\qz$, $=g\qz$) is equal to $[z_{n-1},z_n]$ (resp. $(h^{n}_1...h^{n}_{r_{n}})[z_{n-1},z_n]$, $g(h^{n}_1...h^{n}_{r_{n}})[z_{n-1},z_n]$).
  We set $s(t)=(h^\ell_1...h^\ell_{r_\ell})\cdots(h^{n+1}_1...h^{n+1}_{r_{n+1}})(h^{n}_1...h^{n}_{r_{n}})\pi(t)$ for $t\in [t_{n-1},t_n]$.

\par With this inductive definition, $s$ is a $\ql-$path, $s(1)=y$, $\qr_{\g c_-}\circ s=\qp$ and $s_{\vert [t_{k-1},t_k]}$ is a segment $\forall k\in\{1,..., \ell+1\}$.
 Moreover, for $k\in\{1,..., \ell\}$, the segment germs $[s(t_k),s(t_{k+1}))$ and $[s(t_k),s(t_{k-1}))$ are opposite.
  By the following lemma this proves that $s$ itself is a segment.
\end{proof}

\begin{lemm}\label{4.9} Let $x,y,z$ be three points in an ordered hovel $\SHI$, with $x\leq{}y\leq{}z$ and suppose the segment germs $[y,z)$ , $[y,x)$ opposite in the twin buildings $\SHI_y$.
 Then $[x,y]\cup[y,z]$ is the segment $[x,z]$.
\end{lemm}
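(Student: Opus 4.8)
The plan is to show that $[x,y]\cup[y,z]$, which is a priori only a piecewise-linear path from $x$ to $z$ broken at $y$, is in fact a genuine (straight) segment, by producing a single apartment containing all of it and checking that in that apartment the union is straight. First I would use the hypothesis $x\leq y\leq z$ together with the axiom \textbf{(MAO)} and the basic theory of \S\ref{s1}: by definition of the preorder, $x$ and $y$ lie in a common apartment, and $y$ and $z$ lie in a common apartment. The key point is that the segment germ $[y,z)$ is opposite to $[y,x)$ in the twin building $\SHI_y$; this is exactly the local condition saying that, near $y$, the directions $\vect{yx}$ and $\vect{yz}$ are opposite, i.e. the broken path has no corner at $y$ at the infinitesimal level.

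Next I would invoke the fact (essentially the building-theoretic statement that an opposite pair of chambers, or here segment germs, lies in a unique common apartment of the local twin building $\SHI_y=(\SHI_y^+,\SHI_y^-)$) to get a local apartment containing both germs $[y,x)$ and $[y,z)$; then I would use the structure axioms \textbf{(MA2)}--\textbf{(MA4)} to extend this to an actual apartment $A$ of $\SHI$ containing a neighbourhood of $y$ in $[x,y]\cup[y,z]$. Concretely: choose an apartment $A$ containing $[y,z)$ and $[y,x)$; since the germ $[y,x)$ determines, via the preordered segment-germ axiom \textbf{(MA2)}, that $A$ must contain $[x,y]$ once it contains a preordered interval germ at $y$ pointing towards $x$ (and symmetrically $A$ contains $[y,z]$ by \textbf{(MAO)} applied with the apartment on the $z$-side). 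The opposition hypothesis guarantees that inside $A$, identified with the model apartment $\A$, the point $y$ lies strictly between (on the line through) $x$ and $z$: the derivative of $[x,y]$ arriving at $y$ is $-\,$(derivative of $[y,z]$ leaving $y$), hence the concatenation is a straight line segment in the affine space $\A$, so $[x,y]_A\cup[y,z]_A=[x,z]_A$.

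Finally I would conclude by the uniqueness clauses: by \textbf{(MAO)}, the segment $[x,z]$ computed in \emph{any} apartment containing $x$ and $z$ (such as $A$, or any apartment given by $x\leq z$) agrees with $[x,z]_A$; and the sub-segments $[x,y]$, $[y,z]$ are likewise independent of the apartment by \textbf{(MAO)} again (or by the very definition of segments in \S\ref{1.3}, \S\ref{1.5}). Hence $[x,y]\cup[y,z]=[x,z]_A=[x,z]$, as required.

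I expect the main obstacle to be the bookkeeping in the middle step: passing rigorously from the \emph{local} statement ``$[y,x)$ and $[y,z)$ are opposite in the twin building $\SHI_y$, hence lie in a common local apartment'' to a \emph{global} apartment $A$ of $\SHI$ containing honest initial segments of both $[x,y]$ and $[y,z]$, and then checking that this $A$ actually contains the \emph{full} segments up to $x$ and to $z$. This requires a careful application of \textbf{(MA2)} (to the germ of a preordered interval at $y$) on each side and of \textbf{(MAO)} to glue, and one must make sure that the identification of $A$ with $\A$ can be chosen compatibly on the two sides so that the two half-segments emanate from $y$ in genuinely opposite affine directions; the opposition hypothesis in $\SHI_y$ is precisely what makes this compatible choice possible. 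Once that is in place, the straightness in $\A$ and the final identification are routine.
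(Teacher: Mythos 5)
There is a genuine gap, and it is exactly at the spot you flag as the ``main obstacle.'' Your plan is to produce one apartment $A$ that simultaneously contains all of $[x,y]$ and all of $[y,z]$, then check straightness inside $A$. But the step ``since $A$ contains the germ $[y,x)$, axiom (MA2) forces $A$ to contain $[x,y]$'' does not follow: (MA2) only gives that $A\cap A'$ contains $\mathrm{cl}_A$ of the germ, and the enclosure of a segment germ is a local object, not the full segment out to $x$. Likewise, applying (MAO) ``on the $z$-side'' requires $A$ to already contain the two endpoints $y$ and $z$, which is what you are trying to establish. In a hovel an apartment through a germ at $y$ may ``peel off'' of $[x,y]$ or of $[y,z]$ before reaching the endpoints, so the existence of a single apartment containing the whole broken path cannot be assumed a priori (indeed, its existence is essentially equivalent to the conclusion of the lemma).

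The paper avoids this by never asking for one global apartment. Instead it uses a compactness argument along $[y,z]$: for each $u\in[y,z]$, since $x\leq u$ there is an apartment containing $x$ together with a segment germ at $u$ (pointing to $y$ or to $z$); by compactness one extracts a finite chain $y=u_0<u_1<\cdots<u_\ell=z$ and apartments $A_i\ni x$ with $[u_{i-1},u_i]\subset A_i$. Then (MAO) is applied legitimately inside $A_1$ (which \emph{does} contain both endpoints $x$ and $y$) to see $[x,y]\subset A_1$, the opposition hypothesis gives straightness at $y$ inside $A_1$, hence $[x,y]\cup[y,u_1]=[x,u_1]$; and one inducts along the chain, noting that at each $u_i$ the germs $[u_i,x)=[u_i,u_{i-1})$ and $[u_i,u_{i+1})$ are again opposite. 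Your local analysis at $y$ (opposite germs give a straight concatenation in a common apartment) is correct and is used in the paper's argument, but you need to replace ``find one apartment containing everything'' with the compactness/finite-chain induction; the axioms you cite do not by themselves globalize the local picture.
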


\begin{proof} For any $u\in[y,z]$, we have $x\leq{}y\leq{}u\leq{}z$, hence $x$ and $[u,y)$ or $[u,z)$ are in a same apartment \cite[5.1]{R11}.
 As $[y,z]$ is compact we deduce that there are points $u_0=y,u_1,\cdots,u_\ell=z$ such that $x$ and $[u_{i-1},u_i]$ are in a same apartment $A_i$, for $1\leq{}i\leq{}\ell$.
 Now $A_1$ contains $x$ and $[y,u_1]$, hence also $[x,y]$ (axiom (MAO) of \ref{1.3}).
 But $[y,x)$ and $[y,u_1)=[y,z)$ are opposite, so $[x,y]\cup[y,u_1]=[x,u_1]$. The lemma follows by induction.
\end{proof}

\begin{rema}\label{4.10} The same things as above may be done for the retraction $\qr_{-\infty}$ instead of $\qr_{\g c_-}$: for all $x$ we choose $C^-_x=germ_x(x-C^v_f)$.
 For a $\ql-$path $\qp$ in $\A$ from $z'$ to $y$,  \cite[5.2.1]{GR08} tells that we have a finite number of points $z_k=\qp(t_k)$ where at least a wall is left positively by the path $\bar\qp(t)=\qp(1-t)$.
  We define as above $\mathbf i_k$, $\g Q_k$, $\eta_k$ and $\qx_k$. Now $S_{-\infty}(\pi, y)$ is the set of all segments $[z,y]$ such that $\qr_{-\infty}([z,y])=\qp$.

\par In \cite[Theorems 6.2 and 6.3]{GR08}, we have proven that $S_{-\infty}(\pi, y)$  is nonempty if, and only if, $\pi$ is a Hecke path with respect to $-C^v_f$.
Moreover, we have shown that, for $\SHI$ associated to a split Kac-Moody group over $\C(\!(t)\!)$, $S_{-\infty}(\pi, y)$ is isomorphic to a quasi-affine toric complex variety.
 The arguments above prove that, with our choice for $\SHI$, $S_{-\infty}(\pi, y)$ is finite, with the following precision (which generalizes to the Kac-Moody case some formulae of \cite{GL11}):
\end{rema}

\begin{prop}\label{4.11} Let $\qp$ be a Hecke path with respect to $-C^v_f$ from $z'$ to $y$. Then we have a bijection:
$$
S_{-\infty}(\pi, y)\simeq \prod_{k=1}^\ell \coprod_{\mathbf c\in\Gamma_{\mathfrak Q_k}^+(\mathbf i_k,-\eta_k)} \mathcal C^m_{\mathfrak Q_k} (\mathbf c)
$$

\par In particular the number of elements in this set is a polynomial  in the numbers $q\in\shq$ with coefficients in $\Z_{\geq{}0}$ depending only on $\A$.
\end{prop}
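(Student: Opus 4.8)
The plan is to transcribe the proof of Theorem \ref{4.8}, with the alcove-germ $\g c_-$ and the retraction $\qr_{\g c_-}$ replaced throughout by the sector-germ $\g S_{-\infty}$ and the retraction $\qr_{-\infty}$, and ``Hecke path with respect to $-C^v_f$'' replacing ``Hecke path with respect to $\g c_-$''. Remark \ref{4.10} already furnishes the needed data: the points $z_k = \pi(t_k)$, finite in number by \cite[5.2.1]{GR08}, together with $\mathbf i_k$, $\g Q_k$, $\eta_k$, $\qx_k$, where now at every point $x$ one takes $C^-_x = germ_x(x - C^v_f)$; note that with this choice the local positive system at $x$ (the one associated to the opposite chamber $C^+_x = germ_x(x + C^v_f)$) is the standard system $\QF^+_f$. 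Since $\pi$ is assumed to be a Hecke path with respect to $-C^v_f$, the set $\{z_1, \dots, z_\ell\}$ contains every folding point of $\pi$ (exactly as in \ref{4.7}), hence $\pi$ is affine on each interval $[t_{k-1}, t_k]$ and $\qx_k$, $\eta_k$ are $W^v_{z_k}$-conjugate; and non-emptiness of $S_{-\infty}(\pi, y)$ is already granted, under this hypothesis, by \cite[Theorems 6.2 and 6.3]{GR08} recalled in \ref{4.10}.

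First I would record the only substitution that is not purely formal: since $\qr_{-\infty}$ is built from charts identifying a neighbourhood of $\g S_{-\infty}$ with the corresponding subset of $\A$, its restriction to the local building $\SHI_{z_k}$ is $\qr_{\A_{z_k}, C^-_{z_k}}$ for the above $C^-_{z_k}$ --- the analogue of the first line of the proof of \ref{4.8}. Consequently Lemma \ref{4.6}, which is a statement internal to $\SHI_{z_k}$ using only $C^-_{z_k}$ and the twin building $\SHI_{z_k}$, applies verbatim: at each $z_k$ it identifies the segment germs of $\SHI_{z_k}$ opposite $\eta_k$ and retracting to $\pi_-(t_k)$ with $\coprod_{\mathbf c \in \Gamma^+_{\g Q_k}(\mathbf i_k, -\eta_k)} \mathcal C^m_{\g Q_k}(\mathbf c)$, the gallery attached to $(h^k_1, \dots, h^k_{r_k})$ producing the germ $(h^k_1 \cdots h^k_{r_k})\pi_-(t_k)$. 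Each $h^k_j$ lifts to an element of the root group $U_{-r_{i_1} \cdots r_{i_{j-1}}(\qa_{i_j})}$; as the type $\mathbf i_k$ is reduced, this root is negative for the standard ordering, so the group fixes $z_k$ and also the sector-germ $\g S_{-\infty}$, and hence the portions of the lift already constructed further along the path --- a point that is in fact slightly easier to check here than in \ref{4.8}.

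With this in hand I would build, by decreasing induction on $n \in \{1, \dots, \ell+1\}$, a bijection between $S_{-\infty}(\pi_{\vert[t_{n-1},1]}, y)$ and $\prod_{k=n}^\ell \coprod_{\mathbf c \in \Gamma^+_{\g Q_k}(\mathbf i_k, -\eta_k)} \mathcal C^m_{\g Q_k}(\mathbf c)$, following step by step the bookkeeping of \ref{4.8}: on an interval $[t_{k-1}, t_k]$ no wall cutting $\pi([t_{k-1}, t_k])$ is left positively by $\bar\pi$, so a lift of $\pi_{\vert[t_{k-1},t_k]}$ with prescribed value and terminal segment-germ at $z_k$ is unique; and the admissible germs at $z_k$ are enumerated by Lemma \ref{4.6} as above. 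To close the induction I would verify, as in \ref{4.8}, that the piecewise construction yields a genuine $\ql$-path $s$ with $s(1) = y$, $\qr_{-\infty} \circ s = \pi$, and opposite segment-germs $[s(t_k), s(t_{k+1}))$, $[s(t_k), s(t_{k-1}))$ at each $z_k$, and then apply Lemma \ref{4.9} repeatedly to conclude that $s$ is in fact a segment, i.e. an element of $S_{-\infty}(\pi, y)$. For the polynomiality statement, each $\Gamma^+_{\g Q_k}(\mathbf i_k, -\eta_k)$ is a finite set depending only on $\A$ and, by Corollary \ref{4.5}, each $\sharp\, \mathcal C^m_{\g Q_k}(\mathbf c)$ is a product of factors $q_j$ and $q_j - 1$ with $q_j \in \shq$ prescribed by the combinatorics of $\A$; hence $\sharp\, S_{-\infty}(\pi, y) = \prod_{k=1}^\ell \big( \sum_{\mathbf c \in \Gamma^+_{\g Q_k}(\mathbf i_k, -\eta_k)} \sharp\, \mathcal C^m_{\g Q_k}(\mathbf c) \big)$ is a polynomial in the parameters $q \in \shq$ with coefficients in $\Z_{\geq 0}$ depending only on $\A$. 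The main point requiring care is the local identification of $\qr_{-\infty}$ with $\qr_{\A_{z_k}, C^-_{z_k}}$ together with the choice of root-group lifts fixing $\g S_{-\infty}$; once these are in place the rest is a line-by-line transcription of \ref{4.8}, using Lemmas \ref{4.6}, \ref{4.9} and Corollary \ref{4.5} as black boxes.
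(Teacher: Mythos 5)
Your proof is correct and takes the same route as the paper: the paper gives no separate argument for Proposition \ref{4.11}, and Remark \ref{4.10} simply declares that the proof of Theorem \ref{4.8} carries over once $\g c_-$ is replaced by $\g S_{-\infty}$ and one takes $C^-_x = germ_x(x - C^v_f)$ throughout, which is exactly what you do. The two points you single out as requiring care --- the local identification of $\qr_{-\infty}$ with $\qr_{\A_{z_k},C^-_{z_k}}$ on $\SHI_{z_k}$, and the fact that the lifts $h^k_j$ fix $\g S_{-\infty}$ because the roots $-r_{i_1}\cdots r_{i_{j-1}}(\qa_{i_j})$ are negative in the standard ordering --- are precisely the non-formal parts of the transcription.
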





\begin{theo}\label{4.12} Let $\ql,\qm,\qn\in Y^{++}$, $\g c_-$ the negative fundamental alcove and suppose $(\qa^\vee_i)_{i\in I}$ $\R^+-$free. Then

\par a) The number of Hecke paths of shape $\qm$ with respect to $\g c_-$ starting in $z'=w\ql$ (for some $w\in W^v$ fixing $0$) and ending in $y=\qn$ is finite.

b)  The structure constant $m_{\ql,\qm}(\qn)$ \ie the number of triangles $[0,z,\qn]$ in $\SHI$ with $d_v(0,z)=\ql$ and $d_v(z,\qn)=\qm$ is equal to:

\begin{equation}
\label{eq:SC}
m_{\ql,\qm}(\qn)=\sum_{w\in W^v/(W^v)_\lambda}\sum_\qp\prod_{k=1}^{\ell_\qp}\quad\sum _{\mathbf c\in\Gamma_{\mathfrak Q_k}^+(\mathbf i_k,-\eta_k)} \sharp \mathcal C^m_{\mathfrak Q_k} (\mathbf c)
\end{equation}
\par\noindent where $\qp$ runs over the set of Hecke paths of shape $\qm$ with respect to $\g c_-$ from $w\ql$ to $\qn$ and $\ell_\qp$, $\Gamma_{\mathfrak Q_k}^+(\mathbf i_k,-\eta_k)$ and $\mathcal C^m_{\mathfrak Q_k} (\mathbf c)$ are defined as above for each such  $\qp$.

c) In particular the structure constants of  the Hecke algebra $\shh_R$ are polynomials  in the numbers $q\in\shq$ with coefficients in $\Z_{\geq{}0}$ depending only on $\A$.
\end{theo}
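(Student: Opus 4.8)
The plan is to feed the geometric results of this section into the expression for $m_{\ql,\qm}(\qn)$ already obtained in \ref{2.2a}(1), namely $m_{\ql,\qm}(\qn)=\sum_{w\in W^v/(W^v)_\lambda}N_{\g c_-}(\qm,w\ql,\qn)$, where $N_{\g c_-}(\qm,w\ql,\qn)$ counts the $z\in\SHI^+_0$ with $d^v(z,\qn)=\qm$ and $\qr_{\g c_-}(z)=w\ql$. The first step is to show that, for each fixed $w$, the assignment $z\mapsto[z,\qn]$ identifies the set counted by $N_{\g c_-}(\qm,w\ql,\qn)$ with the disjoint union of the sets $S_{\g c_-}(\qp,\qn)$ of \ref{4.7}, $\qp$ running over the Hecke paths of shape $\qm$ with respect to $\g c_-$ from $w\ql$ to $\qn$. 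Indeed such a $z$ satisfies $0\leq{}z\leq{}\qn$, so $[z,\qn]\subset\SHI^+$ and is a genuine segment contained in one apartment ($z\leq{}\qn$); then $\qr_{\g c_-}([z,\qn])$ is a $\qm-$path from $w\ql$ to $\qn$ by \ref{2.2}, hence a Hecke path with respect to $\g c_-$ by the N.B. after Theorem \ref{4.8}, and since $\qr_{\g c_-}$ restricts to a type-preserving isomorphism on the initial local chamber of $[z,\qn]$ one reads off $d^v(z,\qn)=\qm$ from the shape of this path. Conversely any such $\qp$ lies in $\sht$ and joins two points of $Y^+$, so Theorem \ref{4.8} applies: it shows $S_{\g c_-}(\qp,\qn)$ is non empty and identifies it with the set of segments $[z,\qn]$ whose image under $\qr_{\g c_-}$ is $\qp$ (with $z\in\SHI^+_0$ automatically, $z$ being of type $0$ and $\geq{}0$).

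For a) I would then use Proposition \ref{2.4}: under the $\R^+-$freeness hypothesis $m_{\ql,\qm}(\qn)<\infty$, so each $N_{\g c_-}(\qm,w\ql,\qn)\leq{}m_{\ql,\qm}(\qn)$ is finite; as every Hecke path $\qp$ as above contributes $\sharp S_{\g c_-}(\qp,\qn)\geq{}1$ to it, there are only finitely many such $\qp$. (Alternatively one can argue directly, in the style of the proof of \ref{2.4}: applying Lemma \ref{2.3}b to the two sub-paths of $\qp$ at each folding vertex $z_k=\qp(t_k)$ gives $z_k\in(w\ql+\qm-Q^\vee_+)\cap(\qn-\qm+Q^\vee_+)$, a finite set precisely because $(\qa^\vee_i)_{i\in I}$ is $\R^+-$free; together with the finiteness of the number of foldings from \cite[5.2.1]{GR08} and \cite{BCGR11}, this bounds the set of such $\qp$.)

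For b), I would combine the first step with the count of Theorem \ref{4.8}, $\sharp S_{\g c_-}(\qp,\qn)=\prod_{k=1}^{\ell_\qp}\sum_{\mathbf c\in\Gamma_{\mathfrak Q_k}^+(\mathbf i_k,-\eta_k)}\sharp\mathcal C^m_{\mathfrak Q_k}(\mathbf c)$, to get $N_{\g c_-}(\qm,w\ql,\qn)=\sum_\qp\sharp S_{\g c_-}(\qp,\qn)$, and then sum over $w\in W^v/(W^v)_\lambda$; this is exactly formula (\ref{eq:SC}), all sums being finite by a) and \ref{2.4}. The interpretation of $m_{\ql,\qm}(\qn)$ as a number of triangles $[0,z,\qn]$ is the one already recorded in \ref{2.2a}(1). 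For c), Corollary \ref{4.5} writes each $\sharp\mathcal C^m_{\mathfrak Q_k}(\mathbf c)$ as a product of parameters $q\in\shq$ and of factors $q-1$, and Theorem \ref{4.8} already records that the resulting $\sharp S_{\g c_-}(\qp,\qn)$ is a polynomial in the $q\in\shq$ with coefficients in $\Z_{\geq{}0}$ depending only on $\A$; since the sets $\Gamma_{\mathfrak Q_k}^+(\mathbf i_k,-\eta_k)$, the integers $\ell_\qp$ and the Hecke paths involved are defined purely from the apartment $\A$, its walls and $W^v$, and since $W^v/(W^v)_\lambda$ and (by a)) the set of relevant $\qp$ are finite, $m_{\ql,\qm}(\qn)$ is a finite sum of such polynomials, hence itself a polynomial in the $q\in\shq$ with coefficients in $\Z_{\geq{}0}$ depending only on $\A$.

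Most of the difficulty has been absorbed into Theorem \ref{4.8} (and, behind it, Proposition \ref{4.4} and Lemma \ref{4.6}); what remains is essentially bookkeeping. The point that needs genuine care is the first step above: checking that a segment $[z,\qn]\subset\SHI^+$ whose image under $\qr_{\g c_-}$ is a $\qm-$path really has $d^v(z,\qn)=\qm$ (this rests on the local isometry property of $\qr_{\g c_-}$ and on $[z,\qn]$ lying in a single apartment), and that the points $z$ parametrised by $S_{\g c_-}(\qp,\qn)$ automatically lie in $\SHI^+_0$. If one insists on a self-contained proof of a) rather than invoking \ref{2.4}, the main obstacle moves to the finiteness argument sketched parenthetically, which is exactly where the $\R^+-$freeness hypothesis is used.
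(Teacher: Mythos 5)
Your proposal is correct and follows essentially the same route as the paper: start from the expression of $m_{\ql,\qm}(\qn)$ in \S\ref{2.2a}(1), map each counted $z$ to the Hecke path $\qr_{\g c_-}([z,\qn])$ with starting point $w\ql$, invoke Theorem~\ref{4.8} for the fibre count and non-emptiness, Proposition~\ref{2.4} for finiteness in a), and Corollary~\ref{4.5} for c). The extra care you take (that $d^v(z,\qn)=\qm$ is recovered from the shape of the retracted path, that $z\in\SHI^+_0$ is automatic, and the alternative self-contained finiteness argument for a) via Lemma~\ref{2.3}b) is sound bookkeeping the paper leaves implicit, not a different method.
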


\begin{proof} We saw in \ref{2.2a}.1 that $m_{\ql,\qm}(\qn)$ is the number of $z\in\SHI_0^+$ such that  $d_v(0,z)=\ql$ and $d_v(z,\qn)=\qm$.
 Such a $z$ determines uniquely a Hecke path $\qp=\rho_{\g c_-} ([z,\qn])$ of shape $\qm$ with respect to $\g c_-$ from $z'=\rho_{\g c_-} (z)$ to $\qn$.
  But $d_v(0,z)=\ql$ and $0\in\g c_-$, so $d_v(0,z')=\ql$ \ie $z'=w\ql$ with $w\in W^v$. So the formula (\ref{eq:SC}) follows from theorem \ref{4.8}.

  \par We know already that $m_{\ql,\qm}(\qn)$ is finite (\ref{2.4}) and $S_{\g c_-}(\pi, y)\not=\emptyset$  (theorem \ref{4.8}), hence a) is clear.
   Now c) follows from corollary \ref{4.5}
\end{proof}



\section{Satake isomorphism}\label{s5}

In this section, we prove the Satake isomorphism. From now on, we assume that the $\alpha_i^\vee$'s are free.

\par We denote by $U^-$ the fixator in $G$ of the sector germ $\g S_{-\infty}$, \ie any $u\in U^-$ has to fix pointwise a sector $x-C^v_f\subset\A$.
 By definition, for $z\in\SHI$, $\qr_{-\infty}(z)$ is the only point of the orbit $U^-.z$ in $\A$.

\subsection{The module of functions on the type $0$ vertices in $\A$}\label{5.1}

Let $\A_0 = \nu(N)\cdot 0 = Y\cdot 0$ be the set of vertices of type $0$ in $\A$. Note that $\A_0$ can be identified to the set of horocycles of $U^-$ in $\SHI_0$, i.e. to $\SHI_0/U^-$, via the retraction $\rho_{-\infty}$.
We consider first $\widehat{\mathscr F}=\widehat{\mathscr F}_R = \mathscr F(\A_0,R)$, the set of functions on $\A_0$ with values in a ring $R$. Equivalently, $\widehat{\mathscr F}$ can be identified with the set of $U^--$invariant functions on $\SHI_0$. 

\par For $\qm\in Y$, we define $\chi_\qm\in\widehat{\mathscr F}$ as the characteristic function of $U^-.\qm$ in $\SHI_0$ (or $\{\qm\}$ in $Y$).
 Then, any $\chi\in\widehat{\mathscr F}_R$ may be written $\chi=\sum_{\qm\in Y}a_\qm\chi_\qm$ with $a_\qm\in R$. We set $supp(\chi)=\{\qm\mid a_\qm\not=0\}$.
Now, let 
$$
\mathscr F =\mathscr F_R = \{\chi\in\widehat{\mathscr F}\mid supp(\chi)\subset \cup_{j=1}^n(\mu_j-Q^\vee_+) \text{ for some }\qm_j\in\A_0\}
$$ be the set of functions on $\SHI_0$ with almost finite support.

\par We define also the following completion of the group algebra $R[Y]$:
$$R[[Y]]=\{f=\sum_{y\in Y} a_ye^y\mid supp(f)=\{y\in Y\mid a_y\not=0\}\subset \cup_{j=1}^n(\mu_j-Q^\vee_+) \text{ for some }\qm_j\in\A_0\}$$
it is clearly a commutative algebra (with $e^y.e^z=e^{y+z}$). Actually, it is the Looijenga's coweight algebra, see Section 4.1 in \cite{Loo}.

\par The formula $(f.\chi)(\qm)=\sum_{y\in Y}a_y\chi(\qm-y)$, for $f=\sum a_ye^y\in R[[Y]]$, $\chi\in\SHF$ and $\qm\in Y$, defines an element $f.\chi\in\SHF$; in particular $e^y.\chi_\qm=\chi_{\qm+y}$.
 Clearly, the map $R[[Y]]\times\SHF\to\SHF$, $(f,\chi)\mapsto f.\chi$ makes $\SHF$ into a free $R[[Y]]-$module of rank $1$, with any $\chi_\qm$ as basis element.

\begin{defiprop}\label{5.2}
The map 
$$
\begin{array}{rcl}
\mathscr F\times\shh & \to & \mathscr F\\
(\chi,\varphi) & \mapsto & \chi*\varphi,
\end{array}
$$ where, for $x\in\SHI_0$, $(\chi*\varphi) (x) = \sum_{y\in\SHI_0} \chi(y)\varphi^\SHI(y,x)$, defines a right action of $\shh$ on $\mathscr F$ that commutes with the actions of $Z = \{n\in N\mid \nu(n)\in Y\}$ and (more generally) $R[[Y]]$.
\end{defiprop}

\begin{proof}
It is relatively clear that $\chi*\varphi$ is a function on $\SHI_0/U^-$ and that the map indeed defines an action. Let us check that this action commutes with the one of $Z$. Let $t\in Z$ and $x\in\SHI_0$, then 
$$
\begin{array}{rcl}
(\chi*\varphi)(tx) & = & \sum_{y\in\SHI_0} \chi(y)\varphi^\SHI(y,tx)\\ 
 & = & \sum_{y'\in\SHI_0} \chi(ty')\varphi^\SHI(ty',tx) \qquad (y = ty')\\
 & = & \sum_{y'\in\SHI_0} \chi(ty')\varphi^\SHI(y',x)\\
 & = & ((\chi\circ t)*\varphi)(x).
\end{array}
$$ 
So, $(\chi\circ t)*\varphi= (\chi*\varphi)\circ t$.
 For $\qn(t)=\qm\in Y$ and $\chi\in\SHF$, we have clearly $\chi\circ t=e^{-\qm}.\chi$.
 As a formal consequence, the right action of $\shh$ commutes with the left action of $R[[Y]]$.

\par The difficult point is to show that the support condition is satisfied. 
For any $\lambda\in Y^{++}$, and any $\nu\in Y$, 
$$
\begin{array}{rcl}
(\chi_\mu*c_\lambda)(\nu) & = & \sum_{y\in\SHI_0} \chi_\mu(y)c_\lambda^\SHI(y,\nu)\\ 
 & = & \sharp \{y\in\SHI_0\mid \rho_{-\infty}(y) = \mu \ \text{ and }\ d^v(y,\nu) = \lambda\}
\end{array}
$$ 
The latest is also the cardinality of the set of all segments $[y,\nu]$ in $\SHI$ ($y\leq \nu$) of ``length'' $\lambda$ such that $y\in U^-\cdot \mu$. In addition, since the action of $\shh$ commutes with the one of $Z$, we set $n_\lambda(\nu-\mu) = (\chi_\mu*c_\lambda)(\nu) $. Then $n_\lambda(\nu-\mu) = \sum_{\pi} \sharp S_{-\infty}(\pi, \nu)$ where the sum runs over the set of Hecke $\lambda-$paths with respect to $-C_f^v$ from $\mu$ to $\nu$ (see \ref{4.10} for the definition of $ S_{-\infty}(\pi, \nu)$).

Now, Lemma \ref{2.3} b) shows that $n_\lambda(\nu-\mu)\ne 0$ implies $\nu-\mu\leq_{Q_+} \lambda$. Moreover, if $\nu = \lambda + \mu$, then $n_\lambda(\lambda) = 1$. Therefore, we get 
\begin{equation}\label{eq:FH}
\chi_\mu*c_\lambda = \sum_{\nu\leq_{Q^\vee} \lambda +\mu} n_\lambda(\nu-\mu) \chi_\nu= \chi_{\ql+\qm}+\sum_{\nu<_{Q^\vee} \lambda +\mu} n_\lambda(\nu-\mu) \chi_\nu.
\end{equation}
This formula shows that, for any $\varphi\in\shh$ with $supp(\varphi)\subset \cup_{i=1}^n(\lambda_i-Q^\vee_+)$ and any $\xi\in\mathscr F$ with $supp(\chi)\subset \cup_{j=1}^n(\mu_j-Q^\vee_+)$, the support of $\chi*\varphi$ is contained in $\cup_{i,j}(\lambda_i + \mu_j-Q^\vee_+)$. More precisely, for any $\nu\in \cup_{i,j}(\lambda_i + \mu_j-Q^\vee_+)$ there exists a finite number of $\lambda\in supp(\varphi)$ and $\mu\in supp(\chi)$ such that $\nu\leq_{Q_+} \lambda +\mu$. Hence, $\chi*\varphi$ is well defined.
\end{proof}

\subsection{The Satake isomorphism}\label{5.3}

\subsubsection{The  morphism $\shs_*$}

As $\SHF$ is a free $R[[Y]]-$module of rank one, we have $End_{R[[Y]]}(\SHF)=R[[Y]]$. So the right action of $\shh$ on the $R[[Y]]-$module $\SHF$ gives an algebra homomorphism $\shs_*:\shh\to R[[Y]]$ such that $\chi*\qf=\shs_*(\qf).\chi$ for any $\qf\in\shh$ and any $\chi\in\SHF$.

\par As $e^\nu.\chi_\mu=\chi_{\qm+\qn}$, equation (\ref{eq:FH}) gives 
$$\shs_*(c_\ql)= \sum_{\nu\leq_{Q^\vee} \lambda } n_\lambda(\nu) e^\nu= e^{\ql}+\sum_{\nu<_{Q^\vee} \lambda} n_\lambda(\nu) e^\nu$$

We shall modify $\shs_*$ by some character to get the Satake isomorphism.

\subsubsection{The module $\qd$}

We define a map $\qd:Q^\vee\to\R_+^*\,,\,\sum_{i\in I}\,a_i\qa_i^\vee\mapsto\prod_{i\in I}\,(q_iq_i')^{a_i}$, where $q_i,q_i'\in\shq\subset\N$ are as in the beginning of Section \ref{s4}.
 We extend this homomorphism and its square root to $Y$ (as $\R_+^*$ is uniquely divisible).
 So, we get homomorphisms $\qd,\qd^{1/2}:Y\to\R_+^*$ and $\qd=\qd\circ\qn,\qd^{1/2}=\qd^{1/2}\circ\qn:Z\to \R_+^*$.
 
 \par We made a choice for $\qd$. But we shall see in theorem \ref{5.4} that the expected properties depend only on $\qd\rest{\,Q^\vee}$.

\par In the classical case, where $G$ is a split semi-simple group and $\SHI$ its Bruhat-Tits building, we have $q_i=q_i'=q$ for any $i\in I$. Hence, if we set $\mu =\sum_{i\in I}\,a_i\qa_i^\vee$, $\qd^{1/2}(\qm)=q^{\sum a_i}=q^{\qr(\qm)}$ where $\qr$ is the half sum of positive roots.

\subsubsection{The Satake isomorphism}

\par From now on, we suppose that the algebra $R$ contains the image of $\qd^{1/2}$ in $\R_+^*$. We define 
$$
\shs(c_\ql)=\sum_{\qm\leq_{Q^\vee}\ql}\,\qd^{1/2}(\qm)n_\ql(\qm)e^\qm=\qd^{1/2}(\ql)e^\ql+\sum_{\qm<_{Q^\vee}\ql}\,\qd^{1/2}(\qm)n_\ql(\qm)e^\qm
$$ and extend it to formal combinations of the $c_\ql$ with almost finite support.

\par We get thus an algebra homomorphism $\shs:\shh\to R[[Y]]$ called the {\it Satake isomorphism}, as it is one to one:

\par For $\qf=\sum_\ql a_\ql c_\ql\in\shh$, we have $\shs(\qf)=\sum_\ql\,a_\ql\big(\qd^{1/2}(\ql)e^\ql+\sum_{\qm<_{Q^\vee}\ql}\,\qd^{1/2}(\qm)n_\ql(\qm)e^\qm \big)$.
 If $\qf\not=0$ and $\ql_0$ is a maximum element in $supp(\qf)$, then $\ql_0$ is also a maximum element in $supp(\shs(\qf))$ and $\shs(\qf)\not=0$.

\begin{remas*} 

a) So we already know that $\shh$ is commutative.
 
b) In the classical case where $G$ is a split semi-simple group, $\shs(c_\ql)$ is defined as an integral over a maximal unipotent subgroup, we choose here $U^-$.
  The Haar measure $du$ on $U^-$ is chosen to give volume $1$ to $K\cap U^-$, and, for an element $t$ in the torus $Z$, the formula for changing variables is given by $d(tut^{-1})=\qd(t)^{-1}du$. 
   So the classical formula for the Satake isomorphism given \eg in \cite[(19) p 146]{Ca79} when $\qn(t)=\qm$, is:
 
 $$
\begin{array}{rcl}
\shs(c_\ql)(t) &= \qquad\qd(t)^{1/2}\int_{U^-}\,c_\ql^G(ut)du\quad\; & = \; \qd(t)^{1/2}\int_{U^-}\,c_\ql^\SHI(0,ut.0)du\\
\qquad &= \;\qd(t)^{1/2}\int_{U^-}\,c_\ql^\SHI(u^{-1}.0,t.0)du&=\;\qd(t)^{1/2}\sum_{y\in U^-.0}\,c_\ql^\SHI(y,\qm)\\
\qquad  &=\;\qd(t)^{1/2}\sum_{y\in\SHI_0}\,\chi_0(y).c_\ql^\SHI(y,\qm)&=\;\qd(t)^{1/2}(\chi_0*c_\ql)(\qm)
\end{array}
$$
 This is the same formula as ours.
 \end{remas*}
 
\subsubsection{$W^v-$invariance} 

There is an action of $W^v$ on $Y$, hence on $R[Y]$ by setting $w.e^\ql=e^{w\ql}$ for $w\in W^v$ and $\ql\in Y$.
  This action does not extend to $R[[Y]]$, but we define $R[[Y]]^{W^v}=\{f=\sum a_\ql e^\ql\in R[[Y]] \mid a_\ql=a_{w\ql}, \forall\ql\in Y, \forall w\in W^v\}$.
  This is a subalgebra of $R[[Y]]$ and actually the image of the Satake isomorphism (see Theorem \ref{5.4}).

\begin{rema*}

 Let $C^\vee=\{\qp\in V^*\mid\qa_i^\vee(\qp)\geq0,\forall i\in I\}$ and $\sht^\vee=\cup_{w\in W^v}\,wC^\vee$ be the fundamental dual chamber and the dual Tits cone in $V^*$.
  By definition, for $f\in R[[Y]]$ and $\qp\in C^\vee$, $\qp(supp(f))$ is bounded above.
  Hence, for $f\in R[[Y]]^{W^v}$, $\qp(supp(f))$ is also  bounded above for any $\qp\in\sht^\vee$.
  We know that the dual cone of $\overline\sht^\vee$ is the closed convex hull $\overline\Gamma$ of the set $\QD_+^{\vee im}\cup\{0\}$, where $\QD_+^{\vee im}\subset Q_+^\vee$ is the set of positive imaginary roots in the dual system of roots $\QD^\vee$, \cite[5.8]{K90}.
  So, the only directions along which points in $supp(f)$ (for $f\in R[[Y]]^{W^v}$) may go to infinity are the directions in $-\overline\Gamma$.
\end{rema*}

\begin{theo}\label{5.4} 
The Hecke algebra $\shh_R$ is isomorphic via $\shs$ to the commutative algebra $R[[Y]]^{W^v}$ of Weyl invariant elements in $R[[Y]]$.
\end{theo}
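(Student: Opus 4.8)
Since $\shs$ is already known to be an injective homomorphism of $R$-algebras — by the triangular formula $\shs(c_\ql)=\qd^{1/2}(\ql)e^\ql+\sum_{\qm<_{Q^\vee}\ql}\qd^{1/2}(\qm)n_\ql(\qm)e^\qm$ established in \ref{5.3} — proving the theorem amounts to identifying its image with $R[[Y]]^{W^v}$. The plan is to establish the two inclusions separately: first $\shs(\shh_R)\subseteq R[[Y]]^{W^v}$, which is the $W^v$-invariance of each $\shs(c_\ql)$ and is the substantial point, and then $R[[Y]]^{W^v}\subseteq\shs(\shh_R)$, which is a triangularity argument. Combining these with injectivity gives that $\shs$ is an isomorphism onto $R[[Y]]^{W^v}$, and the commutativity of $\shh_R$ then follows since $R[[Y]]^{W^v}$ is a subalgebra of the commutative ring $R[[Y]]$.

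For the inclusion $\shs(\shh_R)\subseteq R[[Y]]^{W^v}$: as $R[[Y]]^{W^v}$ is stable under the almost-finite sums occurring in $\shh_R$, it suffices to show each $\shs(c_\ql)$ is $W^v$-invariant, and since $W^v$ is generated by the $r_i$, it suffices to fix $i\in I$ and prove that for every $\qm\in Y$ the coefficients of $e^\qm$ and $e^{r_i\qm}$ in $\shs(c_\ql)$ coincide, i.e. $\qd^{1/2}(\qm)\,n_\ql(\qm)=\qd^{1/2}(r_i\qm)\,n_\ql(r_i\qm)$; only the restriction of $\qd$ to $Q^\vee$ intervenes, since $\qm-r_i\qm=\qa_i(\qm)\qa_i^\vee\in Q^\vee$. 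The numbers $n_\ql(\qm)$ are sums, over Hecke $\ql$-paths with respect to $-C^v_f$ ending at $\qm$, of the cardinalities $\sharp S_{-\infty}(\pi,\qm)$ computed in \ref{4.11}. To compare the two sides I would introduce the parabolic retraction of $\SHI$ onto an extended tree $\SHI_{\{i\}}\subset\SHI$ attached to the rank-one datum of type $\{i\}$ — a semi-homogeneous tree with parameters $q_i,q_i'$ times a flat affine factor — retracting compatibly with $\qr_{-\infty}$; pushing the count of segments through it reduces the identity to the analogous one in $\SHI_{\{i\}}$, where it is the classical rank-one Satake computation. There a direct count of geodesics in the semi-homogeneous tree (a finite geometric-series identity of the type $1+(q_i'-1)+(q_i-1)q_i'+\cdots$) shows that the factor $\qd^{1/2}$ exactly compensates the growth of the number of segments under $\qm\mapsto r_i\qm$. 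Constructing this retraction within the axiomatics of ordered hovels and checking that it carries the $n_\ql(\qm)$ to their rank-one analogues is the main obstacle; the rank-one identity itself is then elementary.

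For the inclusion $R[[Y]]^{W^v}\subseteq\shs(\shh_R)$: by Lemma \ref{2.3}\,b) together with the fact that $\ql$-paths are $\leq$-increasing, every $\qm\in supp(\shs(c_\ql))$ lies in $(\ql-Q^\vee_+)\cap\sht$, with leading coefficient $\qd^{1/2}(\ql)$ at $\qm=\ql$. Given $f=\sum a_\qn e^\qn\in R[[Y]]^{W^v}$ with $supp(f)\subseteq\bigcup_j(\qn_j-Q^\vee_+)$, I would run a Gaussian elimination over the poset of dominant weights: at each stage pick a $\le_{Q^\vee}$-maximal $\ql\in Y^{++}$ still occurring in the current remainder and subtract $\qd^{-1/2}(\ql)b_\ql\,\shs(c_\ql)$, where $b_\ql$ is the current coefficient of $e^\ql$, thereby cancelling that term. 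Because the $\qa_i^\vee$ are assumed free the cone $Q^\vee_+$ is salient, so for each fixed $\qn$ only finitely many of the $\ql$ processed can contribute to the coefficient of $e^\qn$; hence the resulting (possibly infinite) combination $\sum\qd^{-1/2}(\ql)b_\ql c_\ql$ has almost-finite support, lies in $\shh_R$, and its image under $\shs$ converges in $R[[Y]]$ to an element with the same coefficients as $f$ at all dominant weights. Then $f$ minus this image is a $W^v$-invariant element of $R[[Y]]$ with almost-finite support whose coefficients vanish on $Y^{++}$, hence on $Y^+=W^v\cdot Y^{++}$; a standard fact about the Looijenga algebra — that a $W^v$-invariant element of $R[[Y]]$ with almost-finite support is determined by its coefficients at dominant weights — forces it to be $0$, so $f\in\shs(\shh_R)$ and the proof is complete.
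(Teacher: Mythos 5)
Your outline follows the paper's route quite closely: you correctly reduce the $W^v$-invariance of $\shs(c_\ql)$ to the single-reflection identity $n_\ql(r_i\qm)=n_\ql(\qm)\bigl(\sqrt{q_iq_i'}\bigr)^{\qa_i(\qm)}$, you propose to prove it by factoring $\qr_{-\infty}$ through a parabolic retraction onto the extended building $\SHI(M_\infty)$ (a semi-homogeneous tree of valences $1+q_i$, $1+q_i'$ times a flat factor) and counting there, and you handle surjectivity by triangularity. As you yourself flag, the load-bearing step is exactly what you leave as a sketch: constructing the retraction $\qr=\qr_{\g F_\infty,M_\infty}$ within the hovel axioms, proving the factorization $\qr_{-\infty}=\qr'_{-\infty}\circ\qr$, and using $G(M_0)$-equivariance to reduce $n_\ql$ to a sum over spheres in the tree. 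This is the content of Sections~\ref{5.10}--\ref{5.14} of the paper and is genuinely the substance of the theorem; your proposal names the right objects but does not yet constitute a proof of this step.

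On surjectivity your argument is a correct variant of the paper's, but it leans on an extra fact. The paper builds $\qf_n$ with $supp(f-\shs(\qf_n))\subset\cup_j(\ql_j-Q^\vee_{+n})$ and proves \emph{at each stage} that the shallowest support elements are dominant (a $w$ sending $\qm_k$ strictly up for $\leq_{Q^\vee}$ would push $\qm_k$ deeper into the cone, contradiction), so no global lemma is needed. You instead match $f$ on $Y^{++}$ alone and then assert that a $W^v$-invariant element of $R[[Y]]$ with almost-finite support vanishing on $Y^{++}$ is zero. This is true, but it requires showing that the support of such an element lies in the Tits cone: one picks $\qp$ in the interior of $C^\vee$ (available because $(\qa_i^\vee)_{i\in I}$ is free), observes that $\qp$ is bounded above on any $W^v$-orbit inside an almost-finite set, uses discreteness of $Y$ to attain the supremum, and checks the maximizer is dominant. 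If you want to quote this as a standard fact about the Looijenga algebra, make sure your source actually states it; otherwise the paper's level-by-level dominance argument is the self-contained substitute.
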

\begin{proof} 
As $\shs(c_\ql)=\sum_{\qm\leq_{Q^\vee}\ql}\,\qd^{1/2}(\qm)n_\ql(\qm)e^\qm$ we only have to prove that, for $w\in W^v$, $\qd^{1/2}(\qm)n_\ql(\qm)=\qd^{1/2}(w\qm)n_\ql(w\qm)$ or $n_\ql(w\qm)=n_\ql(\qm)\qd^{1/2}(\qm-w\qm)$.
 It is sufficient to prove this for $w=r_i$ a fundamental reflection, hence to prove that $n_\ql(r_i\qm)=n_\ql(\qm)\qd^{1/2}(\qm-r_i\qm)=n_\ql(\qm)\qd^{1/2}(\qa_i(\qm)\qa_i^\vee)$.
  By the given definition of $\qd$, the wanted formula is:
  
  \begin{equation}\label{eq:inv}
 n_\ql(r_i\qm)=n_\ql(\qm)\Big(\sqrt{q_iq_i'}\Big)^{\qa_i(\qm)}
\end{equation}
 The proof of this formula is postponed to the following subsections, starting with \ref{5.10}.
 One can already notice that $\qa_i(\qm)$ is an integer. If it is odd, since any $t\in Z$ with $\qn(t)=\qm$ exchanges the walls $M(\qa_i,0)$ and $M(\qa_i,\qa_i(\qm))$, hence $q_i=q_i'$.
  So, in any case $\big(\sqrt{q_iq_i'}\big)^{\vert\qa_i(\qm)\vert}$ is an integer.
  
  \par Once the formula (\ref{eq:inv}) is proved we know that $\shs(\shh)\subset R[[Y]]^{W^v}$.
   For $f=\sum a_\qm e^\qm\in R[[Y]]^{W^v}$ with $supp(f)\subset\cup_{j=1}^r\,(\ql_j-Q_+^\vee)$, we shall build a sequence $\qf_n$ in $\shh$ such that $supp(f-\shs(\qf_n))\subset\cup_{j=1}^r\,(\ql_j-Q_{+n}^\vee)$ and $supp(\qf_{n+1}-\qf_n)\subset Y^{++}\cap(\cup_{j=1}^r\,(\ql_j-Q_{+n}^\vee))$, where $Q_{+n}^\vee=\{\sum_{i\in I}n_i\qa_i^\vee\in Q_+^\vee\mid\sum n_i\geq n\}$.
   Then, the limit $\qf$ of this sequence exists in $\shh$ and $\shs(\qf)=f$.
   So, $\shs$ is onto.
   
   \par We build the sequence by induction. We set $\qf_0=0$.
   If $\qf_0,\cdots,\qf_n$ are given as above, we set $\{\qm_1,\cdots,\qm_s\}= supp(f-\shs(\qf_n))\setminus\cup_{j=1}^r\,(\ql_j-Q_{+(n+1)}^\vee)$. 
   For any $w\in W^v$, $w\qm_k\in supp(f-\shs(\qf_n))\subset\cup_{j=1}^r\,(\ql_j-Q_{+n}^\vee)$, so $w\qm_k$ cannot be strictly greater than $\qm_k$ for $\leq_{Q^\vee}$; this proves that $\qm_k\in Y^{++}$.
   So we define $\qf_{n+1}=\qf_n-\sum_{k=1}^s\,a_{\qm_k}(f-\shs(\qf_n))\qd(\qm_k)^{-1/2}c_{\qm_k}$.
   As $\shs(c_\ql)=\qd^{1/2}(\ql)e^\ql+\sum_{\qm<_{Q^\vee}\ql}\,\qd^{1/2}(\qm)n_\ql(\qm)e^\qm$, this $\qf_{n+1}$ is suitable.
\end{proof}

\begin{rema*}
Suppose $G$ is a split Kac-Moody group as in Section \ref{s3}. And consider the complex Kac-Moody algebra $\mathfrak g^\vee$ associated with $G^\vee$, the Langlands dual of $G$. Let $\mathfrak h^\vee = \mathbb C\otimes_{\mathbb Z} Y$ be the Cartan subalgebra of $\mathfrak g^\vee$. Let Rep$(\mathfrak g^\vee)$ be the category of $\mathfrak g^\vee-$modules $V$ such that $V$ is $\mathfrak h^\vee-$diagonalizable, the weight spaces $V_\lambda$ are finite dimensional and the set $\mathscr P(V)$ of weights of $V$ satisfies $\mathscr P(V)\subset \cup_{j=1}^r\,(\ql_j-Q_+^\vee)$, for some $\lambda_j$. One can check that Rep$(\mathfrak g^\vee)$ is stable by tensoring, hence, we can consider its Grothendieck ring $K(\mathfrak g^\vee)$. Now, the map $[V]\mapsto \sum_\lambda (\dim V_\lambda)e^\lambda$ is an isomorphism from $K(\mathfrak g^\vee)$ onto $\mathbb C[[Y]]^{W^v}$. Therefore, by composing it with $\shs$, we get an isomorphism between $\shh_{\mathbb C}$ and $K(\mathfrak g^\vee)$.
\end{rema*}

\subsection{Extended tree associated to $(\A,\qa_i)$} \label{5.10}
\par We consider the vectorial panel $-F^v(\{i\})$ in $-\overline{C^v_f}$ and its support the vectorial wall $Ker(\qa_i)$. Their respective directions are a panel $\g F_\infty$ in a wall $M_\infty$, in the twin buildings $\SHI^{\pm\infty}$ at infinite of $\SHI$ \cite[3.3, 3.4, 3.7]{R11}.

\par The germs of the sector panels in $\SHI$ of direction $\g F_\infty$ are the points of an (essential) affine building  $\SHI(\g F_\infty)$, which is of rank $1$ \ie a tree \cite[4.6]{R11}.

\par The union $\SHI(M_\infty)$ of the apartments in $\SHI$ containing a wall of direction $M_\infty$ is an inessential affine building whose essential quotient is $\SHI(\g F_\infty)$  \cite[4.9]{R11}.
More precisely $\SHI(M_\infty)$ may be identified with the product of the tree $\SHI(\g F_\infty)$ and an affine space quotient of $\A$.

\par The canonical apartment of $\SHI(M_\infty)$ is $\A$ endowed with a smaller set of walls: uniquely the walls of direction $Ker(\qa_i)$. 
 As we chose $\SHI$ semi-discrete (\ref{1.2}), this is a locally finite set of hyperplanes; hence $\SHI(M_\infty)$ is discrete and $\SHI(\g F_\infty)$ a discrete tree (not an $\R-$tree).
 By \cite[2.9]{R11} the valences of these walls are the same in $\SHI(M_\infty)$ and in $\SHI$, \ie $1+q_i$ and $1+q_i'$; hence $\SHI(\g F_\infty)$ is a semi-homogeneous tree of valences $1+q_i$ and $1+q_i'$.
 By definition, $0\in\A$ is in a wall of valence $1+q_i$.
 
 \par We asked that the stabilizer $N$ of $\A$ in $G$ is positive and type preserving (\ref{1.3}) \ie acts on $V=\vect{\A}$ via $W^v$.
  So, the stabilizer in $W^v$ of $M_\infty$ is $\{1,r_i\}$ and $M_\infty$ determines in $V$ a supplementary vectorial subspace of dimension one : $M_\infty^\perp=Ker(1+r_i)$.
  The affine space $\A$ decomposes as the product of the affine space $E=\A/M_\infty^\perp$ with associated vector space $Ker(\qa_i)$ and an affine line ($=\A/Ker(\qa_i)$).
  This decomposition is canonical \ie invariant by the stabilizer $N(M_\infty)$ of $M_\infty$ in $N$.
   As a consequence we get the decomposition  $\SHI(M_\infty)=E\times\SHI(\g F_\infty)$ which is canonical \ie invariant by the stabilizer $G(M_\infty)$ of $M_\infty$ in $G$.
    Moreover $G(M_\infty)$ acts on $E$ by translations only.

\begin{rema*} Suppose $\g G$ is an almost split Kac-Moody group over a local field $\shk$ and $\SHI$ its associated hovel as in \cite{R13}.
 Then the stabilizer $G(\g F_\infty)$ of $\g F_\infty$ in $G$ is a parabolic subgroup, endowed with a Levi decomposition $G(\g F_\infty)=G(M_\infty)\ltimes U(\g F_\infty)$ (with $U(\g F_\infty)\subset U^-$)  and $\SHI(M_\infty)$ (resp. $\SHI(\g F_\infty)$) is the extended (resp. essential) Bruhat-Tits building associated to the reductive group of rank one $G(M_\infty)$, embedded in $\SHI$ \cite[6.12.2]{R13}.
  Any orbit of $U(\g F_\infty)$ in $\SHI$ meets $\SHI(M_\infty)$ in one and only one point.
  
  \par The tree $\SHI(\g F_\infty)$ is a piece of the polyhedral ``compactification'' of $\SHI$ (a true compactification when $\g G$ is reductive).
  With the notation of \cite{R13}, $\SHI(M_\infty)$ (resp. $\SHI(\g F_\infty)$) is the fa\c cade $\SHI(\g G,\shk,\overline{\overline\A})_{\g F_\infty}$ (resp. $\SHI(\g G,\shk,\overline\A^e)_{\g F_\infty}$).
\end{rema*}

\subsection{Parabolic retraction} \label{5.11}

  Let $x$ be a point in $\SHI$. There is a unique sector-panel $x+\g F_\infty$ of vertex $x$ and direction $\g F_\infty$ \cite[4.7.1]{R11}.
  The germ of this sector-panel is a point in $\SHI(\g F_\infty)$, the {\it projection} $pr_{\g F_\infty}(x)$ of $x$ onto $\SHI(\g F_\infty)$, \cf \cite{Ch10}, \cite{Ch11} or \cite[4.3.5]{R13} in the Kac-Moody case.
  
  \par Let $A_x$ be an apartment in $\SHI$ containing $x$ and $\g F_\infty$, hence $x+\g F_\infty$ and $germ_\infty(x+\g F_\infty)$.
  But this germ is in an apartment $B_x$ of $\SHI(M_\infty)$ (axiom (MA3) applied to $germ_\infty(x+\g F_\infty)$ and a sector of direction $C^v_f$) and there exists an isomorphism $\psi_x$ of $A_x$ onto $B_x$ fixing this germ (axiom (MA2)).
  One writes $\qr(x)=\psi_x(x)\in \SHI(M_\infty)$. We have thus defined the {\it retraction} $\qr=\qr_{\g F_\infty,M_\infty}$ {\it of} $\SHI$ {\it onto} $\SHI(M_\infty)$ {\it with center} $\g F_\infty$. We shall now verify that $\qr(x)$ does not depend on the choices made.
  
  \par By definition, $\qr(x)$ is in the hyperplane $H_x$ of $B_x$ containing $germ_\infty(x+\g F_\infty)$ and of direction $M_\infty$, this $H_x$ does not depend on the choice of $B_x$.
  Moreover for two choices $\psi_x:A_x\to B_x$ and $\psi'_x:A_x'\to B_x$, $\psi'_x\circ\psi_x^{-1}$ is the identity on $germ_\infty(x+\g F_\infty)$ hence on $H_x$.
  It is now clear that $\psi_x(x)=\psi'_x(x)$.
  Actually $\qr(x)$ may also be defined in the following simple way: there exist $y,z\in(x+\g F_\infty)\cap B_x$ such that $y$ is the middle of $[x,z]$ in $A_x$, then $\qr(x)$ is the point of $H_x\subset B_x$ such that $y$ is the middle of $[\qr(x),z]$ in $B_x$.
  
  \begin{rema*} It is possible to prove that the image by $\qr$ of a preordered segment is a polygonal line and, in some generalized sense, a Hecke path.
    \end{rema*}
  
 \subsection{Factorization of $\qr_{-\infty}$} \label{5.12}
 
 The panel $\g F_\infty$ is in the closure of the chamber $\g C_{-\infty}$ of $\SHI^{-\infty}$ associated to $-C^v_f$.
  So this chamber or the associated sector-germ $\g S_{-\infty}$ determines an end of the tree $\SHI(\g F_\infty)$ \cite[4.6]{R11} \ie a sector-germ $\g S'$ in $\SHI(M_\infty)$: $\g S'$ is one of the two sector-germs in $\A$ (considered as an apartment of $\SHI(M_\infty)$ with its small set of walls), each element in $\g S'$ contains an half apartment of equation $\qa_i(y)\leq k$ with $k\in \Z$.
   We write $\qr'_{-\infty}$ the retraction of $\SHI(M_\infty)$ onto $\A$ with center  $\g S'$.
   
   \begin{lemm*} The retraction $\qr_{-\infty}$ factorizes through $\qr$ : $\qr_{-\infty}=\qr'_{-\infty}\circ\qr$.
   \end{lemm*}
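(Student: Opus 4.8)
The plan is to verify the identity $\qr_{-\infty} = \qr'_{-\infty} \circ \qr$ pointwise on $\SHI$. So fix $x \in \SHI$. I would compare where the three retractions send $x$ by tracking how each is defined through an apartment containing an appropriate sector-germ, and using the key fact (from \ref{5.10}) that the tree $\SHI(\g F_\infty)$ sits inside $\SHI(M_\infty)$ which in turn sits inside $\SHI$, with the sector-germ $\g S_{-\infty}$ of $\SHI$ restricting to the end $\g S'$ of the tree (hence to a sector-germ $\g S'$ of $\SHI(M_\infty)$).

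\textbf{First step.} Recall the definitions: $\qr(x) = \psi_x(x)$ where $\psi_x : A_x \to B_x$ is an isomorphism fixing $germ_\infty(x + \g F_\infty)$, with $A_x \ni x, \g F_\infty$ and $B_x \subset \SHI(M_\infty)$; and $\qr_{-\infty}(x)$ is the unique point of $\A$ in the orbit $U^- \cdot x$, equivalently it is obtained by choosing an apartment $A$ containing $x$ and $\g S_{-\infty}$ and applying the isomorphism $A \to \A$ fixing (a subsector of) $\g S_{-\infty}$ (the standard description via \cite[4.4]{GR08}). Now I would pick a single apartment $A$ of $\SHI$ containing both $x$ and $\g S_{-\infty}$ (this exists: $\g S_{-\infty}$ is a splayed sector-germ and $x$ a point, so (MA3) applies). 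Since $\g F_\infty \subset \overline{\g S_{-\infty}}$, this same $A$ contains $\g F_\infty$, so it may serve as $A_x$; inside $A$ the sector-panel $x + \g F_\infty$ is well-defined and $germ_\infty(x+\g F_\infty)$ determines the end of the tree corresponding to $\g S'$.

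\textbf{Second step.} Transport everything to $\A$ via the isomorphism $f : A \to \A$ fixing a subsector of $\g S_{-\infty}$; by definition $f(x) = \qr_{-\infty}(x)$. The image $f(x + \g F_\infty)$ is the sector-panel $\qr_{-\infty}(x) + \g F_\infty$ in $\A$, and its germ at infinity is the end $\g S'$ of the tree $\SHI(\g F_\infty)$ viewed inside $\A \subset \SHI(M_\infty)$. Now I claim $\qr(x)$, computed with this choice of $A_x = A$ and with $B_x = \A$ (which is a legitimate apartment of $\SHI(M_\infty)$ containing $germ_\infty(x+\g F_\infty)$ since $f$ carries that germ into $\A$), equals $\qr_{-\infty}(x)$: indeed $\psi_x$ can be taken to be $f$ itself, since $f$ is an isomorphism $A \to \A$ fixing $germ_\infty(x+\g F_\infty)$ — the point is that $f$ fixes a subsector of $\g S_{-\infty}$, and $germ_\infty(x+\g F_\infty)$ lies in the closure-direction of $\g S_{-\infty}$, so by the uniqueness-of-germs argument $f$ fixes $germ_\infty(x+\g F_\infty)$ too. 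Hence $\qr(x) = f(x) = \qr_{-\infty}(x)$, which already lies in $\A$; applying $\qr'_{-\infty}$ (the identity on $\A$) gives $\qr'_{-\infty}(\qr(x)) = \qr_{-\infty}(x)$, as desired. For a general $x$ one must instead argue: $\qr(x) \in \SHI(M_\infty)$, then $\qr'_{-\infty}(\qr(x))$ is computed by an apartment $B'$ of $\SHI(M_\infty)$ containing $\qr(x)$ and $\g S'$; lifting $B'$ to an apartment of $\SHI$ and composing the two defining isomorphisms, one checks the composite fixes a subsector of $\g S_{-\infty}$ and sends $x$ into $\A$, so by uniqueness it computes $\qr_{-\infty}(x)$.

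\textbf{The main obstacle} will be the bookkeeping in the compatibility of isomorphisms: one must be careful that an isomorphism of apartments of $\SHI$ fixing $germ_\infty(x+\g F_\infty)$, when restricted/corestricted to apartments of $\SHI(M_\infty)$, agrees with the structure maps of the sub-building $\SHI(M_\infty)$, and that the sector-germ $\g S'$ of $\SHI(M_\infty)$ is genuinely the trace of $\g S_{-\infty}$ (this is essentially the content of \ref{5.12}'s preamble, which identifies the end of the tree determined by $\g S_{-\infty}$). Once these identifications are pinned down, the factorization is a formal consequence of the uniqueness in axioms (MA2)/(MA4): any two isomorphisms between apartments that agree on a sector-germ agree on its whole enclosure. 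I would phrase the final argument as: choose compatible apartments so that $\qr$, then $\qr'_{-\infty}$, is realized by a chain of isomorphisms whose composite is an isomorphism onto $\A$ fixing a subsector of $\g S_{-\infty}$, hence equals the one defining $\qr_{-\infty}$.
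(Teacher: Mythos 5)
Your overall strategy coincides with the paper's: compose the isomorphism defining $\qr$ with one defining $\qr'_{-\infty}$, and use the uniqueness built into (MA2)/(MA4) to recognize the composite as the isomorphism defining $\qr_{-\infty}$. However, there is a genuine gap in the middle of the argument, and the ``general argument'' at the end is only sketched at exactly the point where care is needed.

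The gap: you claim one can take $B_x=\A$ because ``$f$ carries that germ into $\A$''. This is incorrect. The germ $germ_\infty(x+\g F_\infty)$ is a point of the tree $\SHI(\g F_\infty)$, namely $pr_{\g F_\infty}(x)$, and $\A$ (as an apartment of $\SHI(M_\infty)$) contains this germ only when $pr_{\g F_\infty}(x)$ lies in the fundamental apartment of the tree. This fails for general $x$, so you cannot take $B_x=\A$, and the intermediate claim ``$\qr(x)=f(x)=\qr_{-\infty}(x)$'' is false in general. Relatedly, the fact that $f$ fixes a subsector of $\g S_{-\infty}$ does not imply that $f$ fixes $germ_\infty(x+\g F_\infty)$: the subsector-germ fixed by $f$ determines \emph{one} particular point of the tree, which need not equal $pr_{\g F_\infty}(x)$.

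You then recover with the general argument, which is the right one, but it is compressed where the real work is. When you ``compose the two defining isomorphisms'', the $\psi_x$ produced by the bare definition of $\qr$ is only required to fix $germ_\infty(x+\g F_\infty)$, so the composite need not a priori fix $\g S_{-\infty}$. The fix (and this is exactly what the paper does) is to first choose an apartment $B_x$ of $\SHI(M_\infty)$ containing \emph{both} $germ_\infty(x+\g F_\infty)$ and $\g S_{-\infty}$, and invoke (MA4) to obtain $\psi_x\colon A_x\to B_x$ fixing \emph{both} germs. Then, since $B_x$ contains $\g S_{-\infty}$, it contains $\g S'$, so one has $\qth\colon B_x\to\A$ fixing $\g S'$ (hence $\g S_{-\infty}$), and the composite $\qth\circ\psi_x\colon A_x\to\A$ fixes $\g S_{-\infty}$, whence it is the isomorphism defining $\qr_{-\infty}$. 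With that clarification your proof agrees with the paper's.
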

   \begin{proof} For $x\in\SHI$, one chooses an apartment $A_x$ containing $x$ and $\g C_{-\infty}$, hence the sector $x+\g C_{-\infty}$, its sector-germ $\g S_{-\infty}$ and its panel $x+\g F_\infty$.
   One chooses also an apartment $B_x$ of $\SHI(M_\infty)$ containing $germ_\infty(x+\g F_\infty)$ and  $\g S_{-\infty}$. 
   Hence, $A_x$ and $B_x$ contain both $germ_\infty(x+\g F_\infty)$ and  $\g S_{-\infty}$; by axiom (MA4) there exists an isomorphism $\psi_x$ of $A_x$ onto $B_x$ fixing these two germs.
   By the definition of the parabolic retraction, in \ref{5.11}, $\qr(x)=\psi_x(x)$.
   
   \par Now the apartments $A_x$ and $B_x$ of $\SHI(M_\infty)$ contain both $\g S_{-\infty}$, hence $\g S'$.
   So there is an isomorphism $\qth: B_x\to\A$ fixing $\g S'$, hence $\g S_{-\infty}$.
   As $\qr(x)\in B_x$, one has $\qr'_{-\infty}\circ\qr(x)=\qth(\qr(x))=\qth\circ\psi_x(x)$ and this is $\qr_{-\infty}(x)$ as $\qth\circ\psi_x:A_x\to\A$ is an isomorphism fixing $\g S_{-\infty}$.
   \end{proof}
  
\subsection{Counting} \label{5.13}

We want to prove  equation (\ref{eq:inv}):
 $ n_\ql(r_i\qm)=n_\ql(\qm)\big(\sqrt{q_iq_i'}\big)^{\qa_i(\qm)}$ 
  for $\ql\in Y^{++}$ and $\qm\in Y$, where $n_\ql(\qm)$ is the number of points $y\in\SHI_0$ such that $\qr_{-\infty}(y)=-\qm$ and $d^v(y,0)=\ql$, \cf \ref{5.2}.
  For $z\in\SHI(M_\infty)$ one writes $p_\ql(z)\in\Z_{\geq0}\cup\{\infty\}$ for the number of points $y\in\SHI_0$ such that $\qr(y)=z$ and $d^v(y,0)=\ql$. 
  By lemma \ref{5.12}, $n_\ql(\qm)$ is the sum of $p_\ql(z)$ for $z\in\SHI(M_\infty)\cap\SHI_0$ such that $\qr'_{-\infty}(z)=-\qm$.
  
  \par Let $M_0=0+M_\infty=Ker(\qa_i)$ be the wall in $\A$ of direction $M_\infty$ containing $0$.
  Its fixator $G(M_0)$ ($\subset G(M_\infty)$) acts transitively on the apartments of $\SHI$ or $\SHI(M_\infty)$ containing it (by axiom (MA4), as $M_0$ is the enclosure of two sector panel germs).
  Moreover $G(M_\infty)$ fixes $\g F_\infty$, hence $\qr$ is $G(M_\infty)-$equivariant.
  As a consequence, the weight function $p_\ql$ is constant on the orbits of $G(M_0)$ in $\SHI(M_\infty)\cap\SHI_0$.
  Hence $n_\ql(\qm)=\sum_\QO\,p_\ql(\QO)n^\QO(-\qm)$, where the sum runs over the orbits $\QO$ of $G(M_0)$ in $\SHI(M_\infty)\cap\SHI_0$ and $n^\QO(\qn)$ is the number of points in the orbit $\QO$ such that $\qr'_{-\infty}(z)=\qn$.
  
  \par To prove formula (\ref{eq:inv}), it is sufficient to prove for any orbit $\QO$ as above and any $\qn\in Y$ that: 
  $$ n^\QO(r_i\qn)=n^\QO(\qn)\Big(\sqrt{q_iq_i'}\Big)^{-\qa_i(\qn)}$$
  
  \par We saw, in \ref{5.10}, that $G(M_\infty)$ leaves the decomposition $\SHI(M_\infty)=\SHI(\g F_\infty)\times E$ invariant and acts on $E$ by translations.
  But $G(M_0)$ fixes $M_0\ni0$, so it acts trivially on $E$.
  As $G(M_0)$ is transitive on the apartments containing $M_0$, an orbit $\QO$ is a set $S_r\times\{e\}$ where $S_r$ is the sphere of radius $r\in\Z_{\geq0}$ and center $0$ in the tree $\SHI(\g F_\infty)$.
  The apartment $\A$ (with its small set of walls) is the product $(\R,\Z)\times E$, where $\qa_i$ is the projection of $\A$ onto the one dimensional apartment $\R$ with vertex set $\Z$.
  
  \par So, the above formula, hence Formula (\ref{eq:inv}) and Theorem \ref{5.4} are consequences of the following proposition.
  The fact that $q_i=q_i'$ when $m=\qa_i(\qn)$ is odd, was explained in the proof of \ref{5.4}.

\subsection{The tree case} \label{5.14}

Let  $\T$ be a (discrete) semi-homogeneous tree. Let $\A\simeq\R$ be an apartment in $\T$ whose vertices are identified with $\Z$. The valence of the vertex $s\in\Z$ is $1+q$ (resp. $1+q'$) if $s$ is even (resp. odd).
 Let $-\infty$ be the end of $\A$ corresponding to integers converging towards $-\infty$.
 Let $\qr'$ be the retraction of $\T$ onto $\A$ with center $-\infty$.
 For $m\in\Z\subset\A$ and $r\in\Z_{\geq0}$ we write $n_r(m)$ the number of vertices in the sphere $S_r$ of center $0$ and radius $r$ in $\T$ such that $\qr'(z)=m$.
 
 \par If $m$ is odd we ask that $q=q'$.
 
 \begin{prop*} One has $n_r(m)=n_r(-m)(\sqrt{qq'})^m$.
 \end{prop*}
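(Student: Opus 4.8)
The plan is to reduce the proposition to an explicit count of geodesic segments $[0,z]$ in $\T$. One may first assume $|m|\le r$ and $m\equiv r\pmod 2$, since otherwise both sides vanish: $\qr'$ restricts to the coordinate on $\A$, satisfies $\qr'(0)=0$, and at each vertex $v$ of $\T$ exactly one of the $1+q_{\epsilon(v)}$ edges issuing from $v$ — the initial edge of the geodesic ray $[v,-\infty)$ — is \emph{descending} (i.e.\ $\qr'$ decreases by $1$ along it), the $q_{\epsilon(v)}$ others being \emph{ascending} ($\qr'$ increases by $1$); here $q_0=q$, $q_1=q'$ with indices read modulo $2$, and $\epsilon(v)\in\{0,1\}$ is the parity of $v$ when $v\in\A$ and its colour in the bipartite tree in general. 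Consequently, if $[0,z]$ (for $z\in S_r$) makes $d$ descending and $u$ ascending steps, then $d+u=r$ and $\qr'(z)=u-d$; so $\qr'(z)=m$ forces $d=(r-m)/2$ and $u=(r+m)/2$.

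The main step is the geometric observation that $[0,z]$ makes all its descending steps first, along $\A$. Indeed, immediately after an ascending step the descending edge at the new vertex is precisely the edge just traversed, which a geodesic cannot re-traverse, so every subsequent step is again ascending; and the descending edge at $0$, resp.\ at $-j$ for $j\ge1$, points to $-1$, resp.\ to $-j-1$, so the descending part of $[0,z]$ is the forced path $0,-1,\dots,-d$. It then remains to count the ascending tails of length $u$ issuing from $-d$ (from $0$ when $d=0$): the first step from $-d$ (with $d\ge1$) must avoid both the backtrack edge and the descending edge at $-d$, hence has $q_{\epsilon(d)}-1$ choices, whereas from $0$ it has $q_0$ choices; and each later step from a vertex of colour $p$ has $q_p$ choices, the colours alternating along the tail. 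This gives $n_r(-r)=1$, $n_r(r)=\prod_{j=0}^{r-1}q_{j}$, and, for $-r<m<r$,
\[
n_r(m)=(q_{\epsilon(d)}-1)\prod_{j=1}^{u-1}q_{\epsilon(d)+j},\qquad d=\tfrac{r-m}{2},\ \ u=\tfrac{r+m}{2}.
\]

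Finally I would compare $n_r(m)$ with $n_r(-m)$, observing that $m\mapsto -m$ merely swaps $d$ and $u$. Using the elementary identity $\prod_{j=1}^{n}q_{e+j}=(qq')^{n/2}$ when $n$ is even and $(qq')^{(n-1)/2}q_{e+1}$ when $n$ is odd, one obtains: if $m$ is even then $d\equiv u\pmod2$, so $q_{\epsilon(d)}-1=q_{\epsilon(u)}-1$ and the ratio of the two alternating products telescopes to $(qq')^{(u-d)/2}=(qq')^{m/2}=(\sqrt{qq'})^{m}$; if $m$ is odd then the hypothesis $q=q'$ makes every $q_p$ equal $q$, and the ratio is $q^{\,u-d}=q^{\,m}=(\sqrt{qq'})^{m}$. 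The boundary values $m=\pm r$ are checked directly against these formulas, and the cases $r\le 1$ provide a quick sanity check. The only delicate point is the parity/colour bookkeeping through the alternating products; in particular the odd-$m$ case genuinely requires the hypothesis $q=q'$, which is exactly what turns $(\sqrt{qq'})^m$ into the integer $q^m$ and closes the argument.
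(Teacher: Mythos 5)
Your proof is correct and takes essentially the same approach as the paper's: both count the geodesics $[0,z]$ for $z\in S_r$ by splitting at the last vertex of $[0,z]$ in $\A$ (your $-d$, the paper's $s_z$) into the forced descending initial segment along $\A$ and the ascending tail, and both arrive at the same alternating-product formulas for $n_r(m)$. Your write-up merely makes explicit two steps the paper leaves implicit — why the descending steps must precede the ascending ones, and the final factor-by-factor comparison of $n_r(m)$ with $n_r(-m)$ — but the underlying argument is identical.
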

  \begin{rema*} This formula is equivalent to the $W^v(\T)-$invariance of the image of the Satake isomorphism for the Bruhat-Tits tree $\T$. 
  As this invariance is known, the following proof is not necessary; we give it for the convenience of the reader.
  
  \par For a Bruhat-Tits tree $\SHI=\T$, there are two choices for $\SHI_0$ (and $Y$): the set of vertices at even distance from $0$ or the full set of vertices.
  In this last case, we have to allow $m$ to be odd and we see below that the hypothesis $q=q'$ is necessary to get the formula.
  So, even for classical Bruhat-Tits buildings, to get the good image for the Satake isomorphism, $\SHI_0$ cannot be any $G-$stable set of special vertices (we chose $\SHI_0$ to be a $G-$orbit).
 \end{rema*}
  \begin{proof} For $z\in S_r$, let $s_z\in\Z$ be the vertex of $\A$ such that $[0,s_z]=[0,z]\cap\A$.
  Then $\qr'(z)=s_z+(r-\vert s_z\vert)\in\Z$.
  
  \par We can calculate the number $n_r(m)$ of vertices  $z\in S_r$  such that $\qr'(z)=m$:
  
  \par{\it First case}: $s_z\geq0\iff\qr'(z)=r$. So $n_r(r)=qq'qq'\cdots$ ($r$ factors).
  
    \par{\it Second case}: $-r\leq s_z<0\iff\qr'(z)<r$ and then $\qr'(z)=r+2s_z$ \ie $s_z=(\qr'(z)-r)/2$. The number $n_r(m)$ is then:
    $$
\begin{array}{rcl}
1\qquad &  & \text{ if} \quad m=s_z=-r \\ 
(q-1)q'qq'\cdots &(r+s_z=(r+m)/2 \text{ factors) }  & \text{ if} \quad s_z\in]-r,0[  {\text\; is\;even}\\ 
(q'-1)qq'q\cdots &(r+s_z=(r+m)/2 \text{ factors) }  & \text{ if} \quad s_z\in]-r,0[   {\text\; is\;odd}
\end{array}
$$ 
\par It is now easy to compare $n_r(m)$ and $n_r(-m)$. We get the wanted formula, using that $q=q'$ when $m$ is odd.
 \end{proof}

\bigskip

\medskip

Institut \'Elie Cartan Nancy, UMR 7502, Universit\'e de Lorraine, CNRS

Boulevard des aiguillettes, BP 70239, 54506 Vand\oe uvre l\`es Nancy Cedex (France)

Stephane.Gaussent@iecn.u-nancy.fr \qquad Guy.Rousseau@iecn.u-nancy.fr

\bigskip
\noindent {\bf Acknowledgement:} The first author acknowledges support of the ANR grants ANR-09-JCJC-0102-01 and ANR-2010-BLAN-110-02.

\end{document}